\newtheorem{theorem}{Theorem}[section]
\newtheorem{lemma}[theorem]{Lemma}
\newtheorem{proposition}[theorem]{Proposition}
\newtheorem*{theorem:main}{Main Theorem}
\newtheorem*{thmA}{Theorem A}
\newtheorem*{thmB}{Theorem B}
\newtheorem*{thmC}{Theorem C}
\theoremstyle{definition}
\newtheorem{definition}[theorem]{Definition}
\newtheorem{example}[theorem]{Example}
\newtheorem{notation}[theorem]{Notation}
\theoremstyle{remark}
\newtheorem{remark}[theorem]{Remark}
\numberwithin{equation}{section}
\newcommand{\free}{\mathbb{F}} 
\newcommand{\fltr}{\emptyset = G_0 \subset G_1 \subset \dots \subset G_K = G}
\newcommand{\Rfltr}{\emptyset = G_0 \subset G_1 \subset \dots \subset G_r = G}
\newcommand{\rfa}{\mathcal{FF}(\free,\mathcal{A})} 
\newcommand{\oo}{\Phi} 
\newcommand{\ffa}{\mathcal{A}}
\newcommand{\rc}{\mathcal{RC}(\mathcal{A})}
\newcommand{\prc}{\mathbb{P}\mathcal{RC}(\mathcal{A})}
\newcommand{\mrc}{\mathcal{MRC}(\mathcal{A})}
\newcommand{\relOut}{\operatorname{Out}(\mathbb{F}, \ffa)} 
\newcommand{\lamination}{\Lambda^+_{\oo}} 
\newcommand{\Rlamination}{\Lambda^-_{\oo}} 
\newcommand{\PFevalue}{\lambda_{\oo}}
\newcommand{\StableCurrent}{\eta^+_{\oo}}
\newcommand{\UnstableCurrent}{\eta^-_{\oo}}
\newcommand{\RelativeBasis}{\mathfrak{B}_{\ffa}} 
\newcommand{\relCVClo}{\overline{\mathbb{P}\mathcal{O}(\free, \ffa)}} 
\newcommand{\relCV}{\mathbb{P}\mathcal{O}(\free, \ffa)}
\newcommand{\RelCVClo}{\overline{\mathcal{O}(\free, \ffa)}}
\newcommand{\RelCV}{\mathcal{O}(\free, \ffa)}
\newcommand{\StableTree}{T^{+}_{\oo}} 
\newcommand{\UnstableTree}{T^{-}_{\oo}}
\newcommand{\StableTreePhi}{T^{+}_{\phi}} 
\newcommand{\PMTrees}{T^{\pm}_{\oo}}
\newcommand{\PMlaminations}{\Lambda^{\pm}_{\oo}}
\newcommand{\MPlaminations}{\Lambda^{\mp}_{\oo}}
\newcommand{\PMCurrents}{\eta^{\pm}_{\oo}}
\newcommand{\MPCurrents}{\eta^{\mp}_{\oo}}
\newcommand{\RelTTTree}{T_G}
\newcommand{\AbsTree}{T_{G^{\p}}}
\newcommand{\dd}{\mathcal{D}}
\newcommand{\ee}{\mathcal{E}}
\newcommand{\op}{\operatorname}
\newcommand{\alert}{\textcolor{red}}
\newcommand{\la}{\langle}
\newcommand{\ra}{\rangle}
\newcommand{\p}{\prime}
\begin{document}

\title{Loxodromic elements for the relative free factor complex}
\author{Radhika Gupta}
\address{Department of Mathematics, Technion, Haifa, Israel}
\curraddr{}
\email{radhikagup@technion.ac.il}
\thanks{The author was partially supported by the NSF grant of Mladen Bestvina (DMS-1607236)}
\keywords{}
\date{}
\dedicatory{}
\begin{abstract}
In this paper we prove that a fully irreducible outer automorphism relative to a non-exceptional free factor system acts loxodromically on the relative free factor complex as defined in \cite{HM:RelativeComplex}. We also prove a north-south dynamic result for the action of such outer automorphisms on the closure of relative outer space.  
\end{abstract}
\maketitle
\section{Introduction}
The study of the outer automorphism group $\op{Out}(\free)$ of a free group $\free$ of rank $n$ is highly motivated by the parallels with the mapping class group $\op{MCG}(\Sigma)$ of a surface $\Sigma$. $\op{MCG}(\Sigma)$ acts on a simplicial complex called the \emph{curve complex} $\mathcal{C}(\Sigma)$. In 1999, Masur and Minsky \cite{MM:CurveComplex} showed that $\mathcal{C}(\Sigma)$ is hyperbolic and since then it has played a crucial role in understanding $\op{MCG}(\Sigma)$. Some remarkable applications include rigidity results for $\op{MCG}(\Sigma)$, bounded cohomology for subgroups of $\op{MCG}(\Sigma)$ and finite asymptotic dimension for $\op{MCG}(\Sigma)$. Several analogues of the curve complex for $\op{Out}(\free)$ have been defined and proven to be hyperbolic, like the \emph{free factor complex}, the \emph{free splitting complex} and the \emph{cyclic splitting complex}. But none of them have proven to be as useful as the curve complex. 

For instance, when a mapping class group element acts on $\mathcal{C}(\Sigma)$ with a fixed point, that is, it fixes a curve $\alpha$, then one can look at its action on the curve complex of the subsurface given by the complement of $\alpha$. Thus we can understand mapping class group elements by an inductive process. On the other hand, consider an outer automorphism which fixes a free factor $A$ in the free factor complex of $\free$. Since the complement of $A$ in $\free$ is not well defined one cannot pass to the free factor complex of a free group of lower rank. 

In \cite{HM:RelativeComplex}, Handel and Mosher define \emph{free factor complex relative to a free factor system} $\rfa$ which is an $\op{Out}(\free)$-analog of the curve complex for a subsurface. They also prove that these relative complexes are hyperbolic for \emph{non-exceptional} free factor systems. The \emph{exceptional} free factor systems are certain ones for which $\rfa$ is either empty or zero-dimensional. They can be enumerated as follows (see Section~\ref{subsec:Relative free factor complex}): $\ffa = \{ [A_1], [A_2] \}$ with $\free = A_1 \ast A_2$,  $\ffa = \{[A]\}$ with $\free = A \ast \mathbb{Z}$ and $\ffa = \{ [A_1], [A_2], [A_3]\}$ with $\free =A_1 \ast A_2\ast A_3$. Here $[.]$ denotes the conjugacy class of a free factor. Since any free factor system of the free group of rank 2 is exceptional, we will work with free groups of rank at least 3. 

Our main theorem is a relative version of a result of \cite{MM:CurveComplex} that a mapping class group element acts \emph{loxodromically}, that is with positive translation length, on the curve complex if and only if it is a pseudo-Anosov homeomorphism. Let $\ffa$ be a free factor system. Let $\relOut$ be the subgroup of $\op{Out}(\free)$ containing outer automorphisms that preserve $\ffa$. After passing to a finite index subgroup we can assume that each conjugacy class of a free factor in $\ffa$ is invariant under the elements of $\relOut$. An outer automorphism $\oo \in \relOut$ is \emph{fully irreducible relative to $\ffa$} if no power of $\oo$ preserves a non-trivial free factor system of $\free$ properly containing $\ffa$.

\begin{thmA}\label{T:main}
Let $\ffa$ be a non-exceptional free factor system in a finite rank free group $\free$ of rank at least 3 and let $\oo \in \op{Out}(\free, \ffa)$. Then $\oo$ acts loxodromically on $\rfa$ if and only if $\oo$ is fully irreducible relative to $\ffa$. 
\end{thmA}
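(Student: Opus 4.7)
The proof splits into two directions. The \emph{only if} direction is soft: if $\oo$ is not fully irreducible relative to $\ffa$, then some power $\oo^k$ preserves a free factor system $\ffa' \supsetneq \ffa$; after passing to a further power we may assume $\oo^k$ fixes a conjugacy class $[A] \in \ffa' \setminus \ffa$, and such an $[A]$ is a vertex of $\rfa$. Hence the $\oo$-orbit of $[A]$ in $\rfa$ is finite, forcing the translation length of $\oo$ to be zero. The substance of the theorem is the \emph{if} direction, for which I would adapt Bestvina--Feighn's proof that absolute fully irreducibles act loxodromically on the ordinary free factor complex, replacing each ingredient by its relative counterpart and using the hypothesis that $\ffa$ is non-exceptional to guarantee that $\rfa$ is an infinite-diameter hyperbolic space on which $\oo$ has room to move.

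\emph{Step 1 (representatives and dual trees).} Using Feighn--Handel's relative CT machinery, pass to a power of $\oo$ admitting a CT representative $f : G \to G$ whose top stratum is exponentially growing. Full irreducibility relative to $\ffa$ rules out lower exponential strata and yields uniqueness (up to scale) of the attracting and repelling laminations $\lamination$ and $\Rlamination$. Forward and backward iteration of $f$, normalized projectively, produces the dual trees $\StableTree$ and $\UnstableTree$ in $\relCVClo$, each projectively fixed by $\oo$ and stretched by the Perron--Frobenius eigenvalue $\PFevalue$.

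\emph{Step 2 (north-south dynamics and an axis).} Establish that $\oo$ acts with north-south dynamics on $\relCVClo$: for every $[T] \neq [\UnstableTree]$, $\oo^n[T] \to [\StableTree]$, with the symmetric statement for $\oo^{-1}$. The key input is that $\lamination$ is carried by every $T \in \relCVClo \setminus \{\UnstableTree\}$ and that the length pairing of $T$ against $\lamination$ is strictly positive on this set, in the spirit of Levitt--Lustig; this is where the relative analogues of the standard Bestvina--Feighn--Handel limit arguments need to be assembled. The north-south result (announced in the abstract) then lets one construct a bi-infinite $\oo$-invariant Lipschitz folding axis in $\RelCV$ whose ends converge to $\UnstableTree$ and $\StableTree$.

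\emph{Step 3 (projection to $\rfa$ is a quasi-geodesic).} Use a coarsely Lipschitz projection $\pi : \RelCV \to \rfa$ sending a marked graph of groups to a vertex of $\rfa$ represented by a short free factor complementing $\ffa$, and show that the image of the folding axis under $\pi$ is a quasi-geodesic; $\oo$-equivariance then forces positive translation length on $\rfa$. This last step is the main obstacle. In the absolute setting Bestvina--Feighn control how free factors carried by short loops can evolve along a folding path; in the relative setting, the invariant subgraph realizing $\ffa$ together with any polynomial-growth strata contribute to the geometry of $\RelCV$ but are invisible to $\rfa$, so one must show that these low-complexity pieces do not swamp the exponential progress driven by the top stratum. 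I expect the technical heart of the paper to go into adapting Handel--Mosher's subsurface-style projection analysis, together with quantitative estimates on relative legal segments, to guarantee that the short free factors picked up by $\pi$ change with definite speed along the axis.
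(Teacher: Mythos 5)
Your ``only if'' direction is fine and is indeed the soft half (the paper does not even spell it out), and your Steps 1--2 line up with what the paper actually proves: relative train track representatives, the trees $\PMTrees$, and uniform north--south dynamics on $\relCVClo$ (Theorem C). The problem is Step 3, which is where the theorem actually lives and where your proposal has a genuine gap. You propose to project an $\oo$-invariant folding axis in $\RelCV$ to $\rfa$ and show the image is a quasi-geodesic, but you give no argument for the crucial claim that the projection makes definite progress --- you explicitly defer it to ``adapting Handel--Mosher's subsurface-style projection analysis.'' That adaptation is not a routine modification: it would amount to proving that folding paths in relative outer space project to unparametrized quasi-geodesics in $\rfa$, a statement that is essentially a re-proof of (a strengthening of) Handel--Mosher's hyperbolicity theorem and is not established anywhere in this paper or its references. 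As written, Step 3 is a restatement of what needs to be shown, not a proof of it.

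The paper avoids folding paths entirely and instead runs the Bestvina--Fujiwara ping-pong sketched in its introduction, which requires a second dynamical system that your proposal never mentions: the space of projectivized relative currents $\prc$, on which $\oo$ also acts with north--south dynamics (Theorem B, from the author's earlier paper). The two dynamical systems are coupled by an intersection form, and here there is a further subtlety you would have hit: the naive length pairing $\la T,\eta_\alpha\ra = l_T(\alpha)$ does \emph{not} extend continuously to $\relCVClo\times\prc$ (Horbez's example in Section 4, and Guirardel--Horbez prove no continuous pairing exists). The paper substitutes a $\{0,1\}$-valued duality $I(T,\eta)$, defined by whether $\overline{\op{supp}(\eta)}\subseteq L(T)$, proves just enough semicontinuity of $I$ (when $T$ has dense orbits or $\overline{\op{supp}(\eta)}$ is birecurrent) together with uniqueness of duals for $\PMTrees$ and $\PMCurrents$, and then builds nested neighborhoods $U_0\supset\cdots\supset U_{2N}$ of $\StableTree$ and $V_1\supset\cdots\supset V_{2N}$ of $\UnstableCurrent$ forming UV- and VU-pairs. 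Given a vertex $\dd$ of $\rfa$, alternating between trees $T_{\ee_i}$ and currents supported on the $\ee_i$ along a putative short geodesic from $\dd$ to $\oo^{-2Nk}\dd$ forces a contradiction, yielding $d_{\rfa}(\dd,\oo^{2Nk}\dd)>2N$. If you want to salvage your proposal, you either need to supply the quasi-geodesic projection theorem for relative folding paths, or switch to the currents-plus-intersection-form ping-pong; the latter is what makes the proof work here.
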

Alternative proof of Theorem A was announced by Handel and Mosher in \cite{HM:RelativeComplex} by generalizing their arguments, which use weak attraction theory, for loxodromic elements for the free splitting complex. Independently, Guirardel and Horbez \cite{GH:Loxodromic} have an alternate proof of Theorem A using the boundary of the relative free factor complex. Guirardel and Horbez posted a paper on the arXiv (after our paper was submitted for publication) \cite{GH:LaminationsFreeProducts} which is the first step towards a description of the Gromov boundary of the free factor graph of a free product. 

\subsection*{Pseudo-Anosov and the curve complex} We give an outline of how to prove that a pseudo-Anosov homeomoprhism acts loxodromically on the curve complex to illustrate the strategy we use to prove Theorem A. The following proof is due to Bestvina and Fujiwara \cite[Proposition 11]{BK:BoundedCohomology}. 

Let $\Sigma$ be a closed surface of genus $g$ and let $\mathcal{C}(\Sigma)$ be the curve complex. Let $\Psi$ be a pseudo-Anosov mapping class group element. Let $\Lambda^+$ and $\Lambda^-$ be the attracting and repelling measured laminations associated to $\Psi$. Let $\mathcal{PML}(\Sigma)$ be the space of all projective measured laminations, which contains the curve complex as a subset. We will need the following facts: 
\begin{itemize}
\item The pseudo-Anosov $\Psi$ acts on $\mathcal{PML}(\Sigma)$ with uniform north-south dynamics, that is, there are two fixed points $\Lambda^+$ and $\Lambda^-$ and any compact set not containing $\Lambda^-(\Lambda^+)$ converges to $\Lambda^+$($\Lambda^-$) under $\Psi$($\Psi^{-1}$)-iterates. 
\item The intersection number $i(\cdot, \cdot)$ between two curves in the curve complex extends to a continuous, symmetric bilinear form $i:\mathcal{PML}(\Sigma) \times \mathcal{PML}(\Sigma) \to \mathbb{R}$.
\item The fixed points $\Lambda^+$ and $\Lambda^-$ are uniquely self-dual, that is, $i(\Lambda^{\pm}, \mu) = 0$ if and only if $\mu = \Lambda^{\pm}$.  
\end{itemize}

If $U$ is a neighborhood of $\Lambda^+$ then there exists a neighborhood $V$ of $\Lambda^+$, such that $V \subset U$ and if $a \in U^C$, $b\in V$ then $i(a,b)>0$. Indeed, if this is not true then we can find a sequence of neighborhoods $U \supset V_1 \supset V_2 \supset \ldots$ and curves $a_i \in U^C$ and $b_i \in V_i$ such that $\{b_i\}$ converges to $\Lambda^+$, $\{a_i\}$ converges to $a \neq \Lambda^+$ and $i(a_i, b_i) = 0$. But by continuity of the intersection number, $i(a_i, b_i)$ converges to $i(a, \Lambda^+)$ which is not zero. We call such a pair a UV-pair. Now consider a sequence of nested neighborhoods of $\Lambda^+$, $U_0 \supset U_1 \supset U_2 \supset U_3 \supset\ldots \supset U_{2N}$ for some $N>0$, such that the following hold:
\begin{itemize}
 \item $(U_i, U_{i+1})$ is a UV-pair for all $0 \leq i <2N$. 
\item $\exists \,k>0$ such that for all $0 \leq i < 2N$, $\Psi^k(U_i) \subset U_{i+1}$ 
\end{itemize}
Let $a$ be a curve such that $a \in U_0$ and $a \notin U_1$. Given $\alpha \in U_i^C$ such that $i(\alpha, \beta) = 0$ then $\beta \in U_{i+1}^C$. Thus we get that $d(a, \Psi^{2Nk}(a)) > N$ in the curve complex.

The above proof strategy can also be employed to prove that a fully irreducible outer automorphism acts loxodromically on the free factor complex (original proof in \cite{BF:HyperbolicComplex}). Though in this case we need north-south dynamics on a certain space of measured currents (\cite{Martin}, \cite{U:HyperbolicIWIP}), north-south dynamics on the closure of outer space (\cite{LL:NSDynamics}) and an intersection number between measured currents and $\free$-trees in the closure of outer space (\cite{KL:IntersectionForm}). We will refer to the case of the fully irreducible outer automorphism as the `absolute case'.

\subsection*{Proof outline} We give an overview of how we generalize the key ingredients mentioned above to the relative setting in order to prove Theorem A. Let $\free = A_1 \ast \cdots \ast A_k \ast F_N$ be a free factor decomposition of $\free$ and let $\ffa = \{[A_1], \ldots, [A_k] \}$, $k \geq 0$ be a free factor system, where $[\cdot]$ denotes the conjugacy class. Let $\zeta(\ffa) = k+N$. The free factor systems $\emptyset$ and $\{[\free]\}$ are called \emph{trivial free factor systems}. Let $\oo$ be a fully irreducible outer automorphism relative to $\ffa$. 

In \cite{G:RelativeCurrents} we define \emph{relative currents} and prove a north-south dynamic result on a subspace $\mrc$ of the space of projective relative currents. See Section~\ref{subsec:RelativeCurrents} for definitions.  

\begin{thmB}[{\cite{G:RelativeCurrents}}]\label{T:NS for currents}
Let $\ffa$ be a non-trivial free factor system of $\free$ such that $\zeta(\ffa) \geq 3$. Let $\oo \in \relOut$ be fully irreducible relative to $\ffa$. Then $\oo$ acts on $\mrc$ with uniform north-south dynamics: there are only two fixed points $\StableCurrent$ and $\UnstableCurrent$ and 
for every compact set $K$ of $\mrc$ that does not contain $\UnstableCurrent$ (rep. $\StableCurrent$), and for every neighborhood $U$ (resp. $V$) of $\StableCurrent$ (resp. $\UnstableCurrent$), there exists $N \geq 1$ such that for all $n \geq N$ we have $\oo^n(K) \subseteq U$ (resp. $\oo^{-n}(K) \subseteq V$).    
\end{thmB}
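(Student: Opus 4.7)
The plan is to adapt the Perron--Frobenius strategy used by Martin (and later by Uyanik and Kapovich--Lustig) to prove north--south dynamics of fully irreducible outer automorphisms on the absolute current space, to the relative setting. The central geometric tool will be a relative train track (or CT) representative $f \colon G \to G$ of $\oo$. Because $\oo$ is fully irreducible relative to $\ffa$, the relative improved train track machinery of Bestvina--Feighn--Handel lets us arrange $G$ so that there is a filtration $\fltr$ whose sub-filtration realizes $\ffa$ and whose top stratum $H_K = \overline{G \setminus G_{K-1}}$ is exponentially growing, with an irreducible (and, after replacing $\oo$ by a positive power, primitive) transition matrix $M$. Let $\PFevalue > 1$ be the Perron--Frobenius eigenvalue of $M$, with positive left and right eigenvectors $\vec{v}^{+}$ and $\vec{v}^{-}$.

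I would then construct the fixed currents $\StableCurrent$ and $\UnstableCurrent$ from the attracting and repelling laminations of $\oo$. The attracting lamination $\lamination$ is obtained from eigenrays of $f$ through a periodic point of $H_K$; the key calculation is that, for each finite immersed path $\gamma$ in $G$, the asymptotic frequency with which $\gamma$ occurs as a subpath of $f^n(e)$ (for an edge $e$ of $H_K$) is well defined and proportional to entries of $\vec{v}^{+}$. These frequencies define a relative current $\StableCurrent$, and the eigenvalue equation for $M$ gives $f_{\ast} \StableCurrent = \PFevalue \cdot \StableCurrent$, so $[\StableCurrent] \in \prc$ is fixed by $\oo$. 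A symmetric construction from a relative train track for $\oo^{-1}$ produces $\UnstableCurrent$. The uniqueness of these two fixed points in $\mrc$ reduces to the classical fact that any nonnegative eigenvector of a primitive matrix is (up to scale) the PF eigenvector, once one has translated the fixed-point condition $\oo \cdot [\eta] = [\eta]$ into a positive eigenvector equation for the $H_K$-edge-frequency vector of $\eta$.

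For the uniform convergence, I would fix a compact $K \subset \mrc$ with $\UnstableCurrent \notin K$ and a neighborhood $U$ of $\StableCurrent$. The key is that, up to a bounded-cancellation error uniform in $n$, the action of $\oo_{\ast}$ on the $H_K$-edge-frequency vector of a relative current is given by multiplication by $M$. Since $M$ is primitive, $M^n$ applied to any nonnegative vector with a positive entry converges projectively to $\vec{v}^{+}$, and the convergence is uniform on any set of such vectors whose positive-entry coordinates are bounded below. By the defining property of $\mrc$, which rules out currents supported entirely in the peripheral subgraph realizing $\ffa$, every $\mu \in K$ has a strictly positive transverse mass in $H_K$; compactness of $K$ bounds this mass uniformly below. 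Combined with the bounded-cancellation estimate, this gives $\oo^n(K) \subseteq U$ for all sufficiently large $n$, and the symmetric argument for $\oo^{-1}$ completes the north--south picture.

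The main obstacle, as in any translation of an absolute result to the relative setting, will be setting up a clean and equivariant correspondence between relative currents and finite-dimensional edge-frequency data on $H_K$. A cyclically reduced word in $\free$ traces a loop in $G$ that alternates between arcs in $H_K$ and excursions into the peripheral subgraph $G_{K-1}$, and the matrix $M$ controls only the $H_K$-arcs. One must therefore verify: (i) the peripheral excursions do not obstruct positivity under iteration; (ii) a uniform bounded-cancellation constant for $f$ persists across peripheral subwords, so that the linear algebra of $M$ really governs the current-level dynamics; and (iii) the subspace $\mrc$ is defined precisely to exclude currents for which the transverse mass vanishes, so the Perron--Frobenius hypothesis is always satisfied. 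Once these bookkeeping steps are in place, the dynamical conclusion follows from the standard Perron--Frobenius convergence theorem essentially as in the absolute case.
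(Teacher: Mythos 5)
Note first that this paper does not prove Theorem~B at all: it is quoted verbatim from \cite{G:RelativeCurrents}, so what follows compares your proposal with the argument given there rather than with anything in the present text. Your Perron--Frobenius framework (frequencies of subpaths in iterates of a top-stratum edge defining $\StableCurrent$, the eigenvalue equation $f_*\StableCurrent = \PFevalue\,\StableCurrent$, primitivity of the transition matrix) is indeed the right family of ideas and is how the fixed currents are built. But there is a genuine gap in step (iii), where you claim that $\mrc$ ``is defined precisely to exclude currents for which the transverse mass vanishes'' and that uniform positivity of the transverse mass over the compact set $K$ is what drives the convergence. Every nonzero relative current automatically has positive transverse mass in the top EG stratum: relative currents are by definition supported on $\partial^2\free\setminus\partial^2\ffa$, and any line not carried by $\ffa$ must cross the top stratum. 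So your argument, as written, would establish uniform north--south dynamics on all of $\prc$, which is false in general. If $\oo$ admits a non-peripheral, $\oo$-periodic conjugacy class $c$ that is not weakly attracted to $\lamination$ (the analogue of the boundary word of a pseudo-Anosov on a surface with boundary; for a relative fully irreducible the non-attracting subgroup system can be $\ffa\cup\{\langle c\rangle\}$ with $c$ contained in no proper free factor system containing $\ffa$), then $[\eta_c]$ is a third fixed point of $\oo$ in $\prc$ with positive transverse mass. The actual purpose of restricting to $\mrc$ --- the closure of rational currents of classes contained in proper free factor systems properly containing $\ffa$ --- is that such classes cannot be the exceptional non-attracted class, so a weak-attraction argument (not Perron--Frobenius positivity) shows their currents converge to $\StableCurrent$. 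Your proposal omits this step entirely, and it is the heart of the matter.

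A secondary but still substantive issue is the reduction to finite-dimensional edge-frequency vectors. Convergence of the top-stratum edge-frequency vector of $\oo^n\mu$ to the Perron--Frobenius eigenvector does not imply convergence of $[\oo^n\mu]$ to $[\StableCurrent]$ in $\prc$: two distinct currents can have identical single-edge frequencies, and the topology on currents is governed by the values on cylinders of \emph{all} finite words in $\free\setminus\ffa$. The standard remedy (Martin, Uyanik, and the relative version in \cite{G:RelativeCurrents}) is a ``goodness'' argument: one shows that for every $\epsilon>0$ and every length $L$, after a number of iterations uniform over $K$, a $(1-\epsilon)$-fraction of the mass of $\oo^n\mu$ is carried on subpaths of length $L$ of leaves of $\lamination$, and this is where compactness of $K$ and the hypothesis $\UnstableCurrent\notin K$ are genuinely used. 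Your bounded-cancellation bookkeeping in (i) and (ii) is necessary but does not substitute for this infinite-dimensional control.
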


In \cite{GL:RelativeOuterSpace}, Guirardel and Levitt define relative outer space for a countable group that splits as a free product. For the group $\free$ and a free factor system $\ffa$ we denote the relative outer space by $\relCV$. In Section~\ref{sec:NorthSouth} we prove the following theorem:

\begin{thmC}\label{T:NS for trees}
Let $\ffa$ be a non-trivial free factor system of $\free$ such that $\zeta(\ffa) \geq 3$. Let $\oo \in \relOut$ be fully irreducible relative to $\mathcal{A}$. Then $\oo$ acts on $\relCVClo$ with uniform north-south dynamics: there are two fixed points $\StableTree$ and $\UnstableTree$ and for every compact set $K$ of $\relCVClo$ that does not contain $\UnstableTree$ (resp. $\StableTree$) and for every open neighborhood $U$ (resp. $V$) of $\StableTree$ (resp. $\UnstableTree$), there exists an $N\geq 1$ such that for all $n \geq N$ we have $K\oo^n \subseteq U$ (resp. $K\oo^{-n} \subseteq V$) 
\end{thmC}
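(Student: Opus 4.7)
The plan is to adapt Levitt--Lustig's argument \cite{LL:NSDynamics} to the relative setting, with Theorem~B and a relative intersection pairing playing the roles of their absolute analogues. The candidate fixed points $\StableTree$ and $\UnstableTree$ are produced from a relative train track (CT) representative of $\oo$ whose top stratum is EG with Perron--Frobenius eigenvalue $\PFevalue > 1$; relative full irreducibility guarantees this structure, and the resulting $\free$-trees are projectively $\oo$-fixed with stretch factor $\PFevalue$.

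The principal tool is a relative intersection pairing $\langle \cdot, \cdot \rangle : \mrc \times \RelCVClo \to \mathbb{R}_{\geq 0}$, continuous in both variables and satisfying $\oo$-equivariance $\langle \mu \cdot \oo, T \cdot \oo \rangle = \langle \mu, T \rangle$. The crucial duality input I would establish is: $\langle \StableCurrent, T \rangle = 0$ if and only if $T$ is a positive scalar of $\StableTree$ in $\RelCVClo$, and symmetrically $\langle \UnstableCurrent, T \rangle = 0$ iff $T$ is a positive scalar of $\UnstableTree$. The easy direction is that $\StableCurrent$ is supported on leaves of $\lamination$ that collapse to points in $\StableTree$; the hard direction needs relative full irreducibility to rule out competing length measures on the lamination.

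The north-south argument then proceeds by contradiction and compactness. Fix $T \in \relCVClo$ with $T \neq \UnstableTree$; we may assume also $T \neq \StableTree$ (else the conclusion is trivial). By duality, $\langle \StableCurrent, T \rangle > 0$. Equivariance together with $\StableCurrent \cdot \oo = \PFevalue \, \StableCurrent$ gives
\[
\langle \StableCurrent, T \cdot \oo^n \rangle \;=\; \langle \StableCurrent \cdot \oo^{-n}, T \rangle \;=\; \PFevalue^{-n} \langle \StableCurrent, T \rangle \;\longrightarrow\; 0.
\]
Suppose a subsequence $T \cdot \oo^{n_k}$ converges projectively to some $T_\infty$ with $T_\infty \not\sim \StableTree$. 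Choose representatives $\tilde T_k = c_k \, T \cdot \oo^{n_k} \to T_\infty$ in $\RelCVClo$; since generic translation lengths $\ell_T(\oo^{n_k}(g))$ grow at rate $\PFevalue^{n_k}$, the normalizing constants satisfy $c_k \lesssim \PFevalue^{-n_k}$. Then $\langle \StableCurrent, \tilde T_k \rangle = c_k \PFevalue^{-n_k} \langle \StableCurrent, T \rangle \to 0$, so by continuity $\langle \StableCurrent, T_\infty \rangle = 0$, and duality forces $T_\infty \sim \StableTree$, a contradiction. The uniform version follows from compactness of $\relCVClo$ by a standard diagonal argument.

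The main obstacle is the duality lemma $\langle \StableCurrent, T \rangle = 0 \Leftrightarrow [T] = [\StableTree]$ in $\relCVClo$. In the absolute case this is due to Bestvina--Feighn--Handel and Kapovich--Lustig \cite{KL:IntersectionForm} and rests on unique ergodicity of the dual lamination of a fully irreducible automorphism. In the relative setting the argument must operate inside the support of the top EG stratum of the CT, exclude length measures arising from the lower filtration and from trees associated to the free factors in $\ffa$, and invoke relative full irreducibility to reduce the cone of length measures compatible with $\lamination$ to a single ray.
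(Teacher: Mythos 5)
Your proposal hinges on a tool that does not exist in the relative setting: a continuous, $\mathbb{R}_{\geq 0}$-valued, homogeneous intersection pairing between relative currents and $\RelCVClo$. Section~\ref{sec:IntersectionForm} of the paper gives an explicit counterexample (due to Horbez, Example~\ref{E:CounterExample}) showing that the natural length pairing $\la T, \eta_\alpha\ra = l_T(\alpha)$ fails to be continuous, and cites \cite{GH:LaminationsFreeProducts} for the stronger statement that no continuous pairing of this kind exists. All that survives is a $\{0,1\}$-valued duality $I(T,\eta)$ (is $\overline{\op{supp}(\eta)}$ contained in $L(T)$?) with only partial continuity (Lemma~\ref{L:ContinuityAtZero}: semicontinuity of the zero locus when $T$ has dense orbits or the support is birecurrent). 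Your quantitative decay computation $\la \StableCurrent, T\oo^n\ra = \PFevalue^{-n}\la\StableCurrent, T\ra \to 0$, and the subsequent passage to the limit by continuity, therefore cannot be carried out. There is also a duality error: the stable tree is dual to the \emph{unstable} current ($\MPlaminations \subseteq L(\PMTrees)$, Lemma~\ref{L:DualLamination}, hence $I(\StableTree,\UnstableCurrent)=0$ while $I(\StableTree,\StableCurrent)\neq 0$); as written, your argument would identify the limit of $T\oo^{n_k}$ with the tree dual to $\StableCurrent$, which is $\UnstableTree$, not $\StableTree$.

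The paper's actual proof is the Levitt--Lustig route you mention in your first sentence but then abandon: Proposition~\ref{P:Proof Strategy} shows that if some generic leaf of $\lamination$ (or $\Rlamination$) has image of diameter $> 2\op{BCC}(h)$ in $T$, then a whole neighborhood of $T$ converges uniformly to $\StableTree$; Lemma~\ref{L:Tcases} verifies this hypothesis for every $T \in \relCVClo$, splitting into the dense-orbits case (via the $\mathcal{Q}$-map, and crucially the new relative phenomena of eigenrays, diagonal leaves and transverse coverings in Proposition~\ref{P:5.1}), the simplicial case (via connectivity of the relative Whitehead graph, Lemma~\ref{L:WhGraph}), and the mixed case; compactness then upgrades pointwise to uniform dynamics. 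The $\{0,1\}$-valued pairing $I$ is used only later, in the proof of Theorem~A. If you want to keep a current--tree duality argument for Theorem~C itself, you would need to reformulate it entirely in terms of $I$ and its restricted continuity, and even then the identification of the limit tree requires the uniqueness-of-dual statement (Lemma~\ref{L:Uniqueness of dual}), whose proof in the paper already presupposes Theorem~C; so the argument as proposed is also circular in the paper's logical order.
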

Let $\phi^{\p}: G^{\p} \to G^{\p}$ be a relative train track representative of an outer automorphism $\oo$ which is fully irreducible relative to $\ffa$, here $G^{\p} \in CV_n$. Let $G$ be the graph obtained by equivariantly collapsing the maximal invariant proper subgraph of $G^{\p}$. Then the set of vertex stabilizers of $G$ is exactly $\ffa$. We follow the techniques of \cite{LL:NSDynamics} to prove Theorem C. The key difference in generalizing the $\mathcal{Q}$ map arguments for trees with dense orbits arises from the fact that unlike the absolute case, the dual lamination of the stable tree in the relative setting contains lines which may not be leaves of the repelling lamination. These lines are \emph{diagonal leaves} that come from concatenating certain rays which we call \emph{eigenrays} based at a vertex with non-trivial stabilizer in $G$ (see Section~\ref{subsec:Q map} for definition). Moreover, unlike the case of a fully irreducible outer automorphism the Whitehead graph of the attracting lamination (Definition~\ref{D:WhGraph}) of a relative fully irreducible outer automorphism may not be connected at a vertex of $G^{\p}$.  We define a transverse covering for the universal cover of $G$ to understand these differences from the absolute case. We also define a relative Whitehead graph (Section~\ref{subsec:Relative Wh graph}) and show that it is connected to prove convergence for simplicial trees in $\relCVClo$.

It turns out that the intersection number between a rational relative current $\eta_{g}$ and a relative tree $T$ in relative outer space defined as the translation length of $g$ in $T$ cannot be extended continuously to the product of the space of relative currents and the closure of relative outer space. See Section~\ref{sec:IntersectionForm} for an example due to Camille Horbez and \cite[Theorem 6]{GH:LaminationsFreeProducts}. In Section~\ref{sec:IntersectionForm}, we give a definition of an intersection form based on the zero pairing criterion in \cite{KL:ZeroLength} which is sufficient for our purposes. If we have a sequence of relative currents $\eta_n$ converging to $\eta$ and a sequence of trees $T_n$ converging to $T$ such that $\eta_n$ is dual to $T_n$ under the intersection form then in general it is not true that $\eta$ is dual to $T$. However, we show that this is true when either $T$ has dense orbits or support of $\eta$ is birecurrent (Lemma~\ref{L:ContinuityAtZero}).

\subsection*{The paper is organized as follows:} In Section~\ref{sec:Preliminaries}, we define relative free factor complex and relative outer space. We also recall some basics about train track maps. In Section~\ref{sec:NorthSouth}, we prove Theorem C. We define a relative Whitehead graph to prove convergence for simplicial trees. We also define a transverse covering and discuss diagonal leaves to carry out the Levitt and Lustig $\mathcal{Q}$ map proof for convergence of trees with dense orbits. We discuss the intersection form in Section~\ref{sec:IntersectionForm} and conclude by giving a proof of Theorem A in Section~\ref{sec:MainProof}.

\subsection*{Acknowledgments} I am grateful to my advisor Mladen Bestvina for his guidance, support and endless patience throughout this project. I am also grateful to Camille Horbez for his invaluable help with the intersection form and many fruitful discussions about dual laminations of trees. I would like to express my gratitude to Arnaud Hilion for explaining his unpublished work \cite{CHL:LaminationLimit} (see Lemma~\ref{L:LaminationInclusion} and  Proposition~\ref{P:LaminationLimit}). I would also like to thank Mark Feighn for helpful comments on an earlier draft and Derrick Wigglesworth for many helpful conversations. I gratefully acknowledge the Mathematical Sciences Research Institute, Berkeley, California, where the final stage of the present work was completed during Fall 2016. I would also like to thank the referee for helpful suggestions.      
\section{Preliminaries}\label{sec:Preliminaries}
\subsection{Outer space} Culler Vogtmann's outer space (resp. unprojetivized outer space), $CV_n$ (resp. $cv_n$), is defined in \cite{CV:OuterSpace} as the space of $\free$-equivariant homothety (resp. isometry) classes of minimal, free and simplicial action of $\free$ by isometries on metric simplicial trees with no vertices of valence two. 

An $\free$-tree is an $\mathbb{R}$-tree with an isometric action of $\free$. An $\free$-tree is called \emph{very small} if the action is minimal, arc stabilizers are either trivial or maximal cyclic and tripod stabilizers are trivial. Outer space can be embedded into $\mathbb{R}^{\free}$ via translation lengths of elements of $\free$ in a tree in $cv_n$ \cite{CM:LengthFunction}. The closure of $CV_n$ under the embedding into $\mathbb{PR}^{\free}$ was identified in \cite{BF:OuterLimits} and \cite{CL:BoundaryOuterSpace} with the space of all very small $\free$-trees. We denote by $\overline{CV}_n$ the closure of outer space and by $\partial CV_n$ its boundary. 
\subsection{Marked graphs and topological representatives} We recall some basic definitions from \cite{BH:TrainTracks}. Identify $\free$ with $\pi_1(\mathcal{R}, \ast)$ where $\mathcal{R}$ is a rose with $n$ petals and $n$ is the rank of $\free$. A \emph{marked graph} $G$ is a graph of rank $n$, all of whose vertices have valence at least two, equipped with a homotopy equivalence $m : \mathcal{R}\to G$ called a \emph{marking}. The marking determines an identification of $\free$ with $\pi_1(G,m(\ast))$. 

A homotopy equivalence $\phi: G \to G$ induces an outer automorphism of $\pi_1(G)$ and hence an element $\oo$ of $\op{Out}(\free)$. If $\phi$ sends vertices to vertices and the restriction of $\phi$ to edges is an immersion then we say that $\phi$ is a \emph{topological representative} of $\oo$. 

A \emph{filtration} for a topological representative $\phi: G \to G$ is an increasing sequence of (not necessarily connected) $\phi$-invariant subgraphs $\fltr$. The closure of $G_r \setminus G_{r-1}$, denoted $H_r$, is a subgraph called the \emph{$r^{th}$-stratum}. Let $\gamma$ be a reduced path in $G$. Then  $\phi(\gamma)$ is the image of $\gamma$ under the map $\phi$. We will denote the tightened image of $\phi(\gamma)$ by $[\phi(\gamma)]$.  
 

\subsection{Relative train track map} We recall some more definitions from \cite{BH:TrainTracks}. A \emph{turn} in a marked graph $G$ is a pair of oriented edges of $G$ originating at a common vertex. A turn is non-degenerate if the edges are distinct, it is degenerate otherwise. 


We associate a matrix called \emph{transition matrix}, denoted $M_r$, to each stratum $H_r$. The $ij^{th}$ entry of $M_r$ is the number of occurrences of the $i^{th}$ edge of $H_r$ in either direction in the image of the $j^{th}$ edge under $\phi$. A non-negative matrix $M$ is called  \emph{irreducible} if for every $i, j$ there exists $k(i,j)>0$ such that the $ij^{th}$ entry of $M^k$ is positive. A matrix is called \emph{primitive} or \emph{aperiodic} if there exists $k>0$ such that $M^k$ is positive. A stratum is called \emph{zero stratum} if the transition matrix is the zero matrix. If $M_r$ is irreducible then its Perron-Frobenius eigenvalue $\lambda_r$ is greater than equal to 1. We say a stratum with an irreducible transition matrix is \emph{exponentially growing (EG)} if $\lambda_r >1$, it is called \emph{non-exponentially growing (NEG)} otherwise. 

A topological representative $\phi : G \to G$ of a free group outer automorphism $\oo$ is a \emph{relative train track map} with respect to a filtration $\fltr$ if $G$ has no valence one vertices, if each non-zero stratum has an irreducible matrix and if each exponentially growing stratum satisfies the following conditions: 
\begin{itemize}
 \item If $E$ is an edge in $H_r$, then the first and the last edges in $[\phi(E)]$ are also in $H_r$. 
\item If $\gamma \in G_{r-1}$ is a non-trivial path with endpoints in $H_r \cap G_{r-1}$, then $[\phi(\gamma)]$ is a non-trivial path with endpoints in $H_r \cap G_{r-1}$. 
\item For each $r$-legal path $\beta \subset H_r$, $[\phi(\beta)]$ is $r$-legal.  
\end{itemize}

\subsection{BFH Laminations }\label{subsec:Laminations}
In \cite{BFH:Tits}, Bestvina, Feighn and Handel defined a dynamic invariant called the attracting lamination associated to an EG stratum of a relative train track map $\phi: G \to G$. The elements of the lamination are called \emph{leaves}. 


Let $\mathcal{B}$ be the space of lines defined as the quotient of $\partial^2 \free := (\partial \free \times \partial \free - \Delta)/ \mathbb{Z}_2$ by the action of $\free$, where $\Delta$ denotes the diagonal. We say $\beta^{\p} \in \mathcal{B}$ is weakly attracted to $\beta \in \mathcal{B}$ under the action of $\oo$ if $[\oo^k(\beta^{\p})]$ converges to $\beta$. A subset $U \subset \mathcal{B}$ is an \emph{attracting neighborhood} of $\beta$ for the action of $\oo$ if $[\oo(U)]$ is a subset of $U$ and if $\{[\oo^k(U)] : k \geq 0\}$ is a neighborhood basis for $\beta$ in $\mathcal{B}$. A bi-infinite path $\sigma$ in a marked graph is \emph{birecurrent} if every finite subpath of $\sigma$ occurs infinitely often as an unoriented subpath of each end of $\sigma$. An element of $\mathcal{B}$ is birecurrent if some realization in a marked graph is birecurrent. 

A closed subset $\Lambda^+$ of $\mathcal{B}$ is called an \emph{attracting lamination} for a free group outer automorphism $\oo$ if it is the closure of a line $\beta$ that is bireccurent, has an attracting neighborhood for the action of some iterate of $\oo$ and is not carried by a $\oo$-periodic free factor of rank one. The line $\beta$ is said to be a \emph{generic leaf} of $\Lambda^+$. In this paper, we will look at the lift of the attracting lamination to $\partial^2 \free$ and denote it also by $\Lambda^+$. 

\begin{lemma}[{\cite[Lemma 3.1.9]{BFH:Tits}}]\label{L:BFH lamination}
Suppose that $\phi : G \to G$ is a relative train track map with respect to a filtration $\fltr$ representing $\oo$ and $H_r$ is an aperiodic EG stratum. Then there is an attracting lamination $\Lambda^+_r$ with generic leaf $\beta$ so that $H_r$ is the highest stratum crossed by a realization of $\beta$ in $G$. \end{lemma}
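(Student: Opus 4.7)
The plan is to construct a candidate leaf $\beta$ as a limit of tightened iterates of a single edge in $H_r$, define $\Lambda^+_r$ to be the closure of the $\free$-orbit of $\beta$ in $\mathcal{B}$, and then verify the four properties in the definition of an attracting lamination: that $\beta$ is birecurrent, that it has an attracting neighborhood for the action of some iterate of $\oo$, that it is not carried by a $\oo$-periodic rank-one free factor, and that the highest stratum crossed by the realization of $\beta$ in $G$ is $H_r$.

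First I would exploit the aperiodicity of $M_r$: there is $N$ with $M_r^N$ strictly positive, so for every pair of edges $E, E'$ in $H_r$ the edge $E'$ appears in $[\phi^N(E)]$. Combined with the first bullet of the relative train track definition (which says that for $E \in H_r$ the first and last edges of $[\phi(E)]$ lie in $H_r$), this lets me pick an edge $E$ together with an interior occurrence of $E$ inside $[\phi^N(E)]$. Iterating, the compositional compatibility $[\phi^{kN}(E)] = [\phi^N \circ \cdots \circ \phi^N(E)]$ together with the fact that $H_r$-legal subpaths tighten predictably yields subpaths $\alpha_k \subset [\phi^{kN}(E)]$ each containing $E$ in its interior, with the one-sided neighborhoods of $E$ inside $\alpha_k$ going to infinity in both directions as $k \to \infty$. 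A standard K\"onig/diagonal compactness argument on the locally finite graph $G$ produces a limiting bi-infinite line $\beta$, which lifts to a well-defined element of $\partial^2 \free / \mathbb{Z}_2 = \mathcal{B}$.

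Next I would read off the height and birecurrence properties. By construction the realization of $\beta$ in $G$ lies entirely in $G_r$ and passes through $E \in H_r$, so $H_r$ is the highest stratum crossed, which gives the last clause of the lemma. For birecurrence: any finite subpath $\gamma$ of $\beta$ sits inside $\alpha_k$ and hence inside $[\phi^{kN}(E)]$ for some $k$; applying one more $\phi^N$ and using positivity of $M_r^N$ we see that $\gamma$ appears many times inside $[\phi^{(k+1)N}(E)]$, and passing to the limit $\beta$ this forces $\gamma$ to recur infinitely often in both ends of $\beta$.

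The main obstacle is the attracting-neighborhood property together with the rank-one exclusion. I would build $U \subset \mathcal{B}$ as the set of lines whose realization in $G$ contains a sufficiently long central subpath $\gamma_0$ of $\beta$ around $E$; the EG hypothesis $\lambda_r > 1$ and the relative train track property (no cancellation of $H_r$-edges in $[\phi(\cdot)]$) imply that lengths of $H_r$-legal segments grow by a definite factor under $\phi$, so $[\oo(U)] \subset U$ and the family $\{[\oo^k(U)] : k \geq 0\}$ forms a neighborhood basis of $\beta$. Finally, if $\beta$ were carried by a $\oo$-periodic rank-one free factor $\la w \ra$ then some iterate of $\beta$ would equal the axis of $w$, which is eventually periodic as a bi-infinite edge-path in $G$ of bounded period; but the aperiodicity of $M_r$ together with the exponential lengthening of $[\phi^k(E)]$ in $H_r$ forces $\beta$ to contain infinitely many distinct finite subpaths of each length, ruling out periodicity. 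This completes the verification that $\Lambda^+_r := \overline{\free \cdot \beta}$ is an attracting lamination with $\beta$ as generic leaf.
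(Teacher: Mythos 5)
You should note at the outset that the paper itself supplies no proof of this lemma: it is quoted verbatim as \cite[Lemma 3.1.9]{BFH:Tits}, so there is no in-paper argument to compare against. Your reconstruction follows the standard Bestvina--Feighn--Handel construction essentially line by line: use aperiodicity of $M_r$ to find an edge $E$ of $H_r$ occurring in the interior of $[\phi^{N}(E)]$, nest the tightened iterates around that occurrence to extract a bi-infinite limit line $\beta$, and then read off the height clause, birecurrence, the attracting neighborhood from the cylinder sets $\{$lines containing a long central subpath of $\beta\}$, and the rank-one exclusion. This is the right architecture, and the birecurrence and attracting-neighborhood steps are correct in outline (with the usual caveat, which you gesture at, that $r$-legality only controls cancellation at $H_r$-edges, so ``nesting'' of leaf segments must be interpreted modulo tightening inside $G_{r-1}$; BFH handle this by defining leaf segments up to that ambiguity).

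The one step that is genuinely wrong as written is the rank-one exclusion. You claim that exponential growth ``forces $\beta$ to contain infinitely many distinct finite subpaths of each length.'' This is impossible: a finite graph has only finitely many reduced edge paths of any given length, so no line has infinitely many distinct subpaths of a fixed length. What you need instead is the following. If $\beta$ were carried by a $\phi$-periodic rank-one free factor $\la w \ra$, then $\beta$ would be the axis of $w$ and $[\phi^{m}(w)] = [w]$ for some $m$, so the circuits $[\phi^{km}(w)]$ have bounded length. On the other hand $\beta$, hence $w$, crosses the $H_r$-edge $E$, and for a relative train track map the number of $H_r$-edges in $[\phi(\sigma)]$ of a circuit $\sigma$ is $M_r$ applied to the $H_r$-edge vector of $\sigma$ minus a correction controlled by the number of $H_r$-illegal turns, which does not increase under iteration; since $\lambda_r > 1$ and $M_r$ is aperiodic, the $H_r$-edge count of $[\phi^{km}(w)]$ grows without bound, contradicting $[\phi^{km}(w)]=[w]$. (Alternatively: a periodic line has at most $|w|$ distinct subpaths of each length $L$, whereas the subpath complexity of $\beta$ is unbounded in $L$ --- but that also requires an argument, and the circuit-growth argument above is the standard and cleaner route.) A second, minor point: positivity of $M_r^{N}$ gives an occurrence of $E$ in $[\phi^{N}(E)]$ but not automatically an \emph{interior} one; pass to a higher power so that the $(E,E)$-entry exceeds $2$, or use that the first and last edges of $[\phi^{N}(E)]$ lie in $H_r$ and grow, to guarantee an interior occurrence.
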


\subsection{CHL Laminations}
In \cite{CHL:AlgebraicLaminationsI} Coulbois, Hilion and Lustig defined three laminations associated to $\free$: algebraic laminations, symbolic laminations and laminary languages. They established the equivalence of the three approaches. An \emph{algebraic lamination} is a non-empty, closed and $\free$-invariant subset of $\partial^2 \free$. 
Let $\Lambda^2(\free)$ be the (compact, metric) space of algebraic lamination in $\free$. 

\begin{definition}[Convergence of laminations {\cite[Remark6.3]{CHL:AlgebraicLaminationsI}}]\label{D:ConvergenceLaminations}
We say a sequence of algebraic laminations $L_n$ converges to a lamination $L_{\infty}$ in $\Lambda^2(\free)$ if the following holds: let $L_n^s$ and $L_{\infty}^s$ be the symbolic laminations associated to $L_n$ and $L_{\infty}$ respectively with respect to some (any) basis of $\free$. Given a symbolic lamination $L^s$, let $\mathcal{L}_m(L^s)$ be the set of words in $L^s$ of length less than equal to $m$. 
The sequence $L_n$ converges to $L_{\infty}$ if for every $m\geq 1$ there exists a $K(m) \geq 1$ such that for every $k \geq K(m)$, $\mathcal{L}_m(L_k^s) = \mathcal{L}_m(L_{\infty}^s)$. 
\end{definition}
\subsection{Dual lamination of an $\mathbb{R}$-tree } Associated to $T\in \overline{CV_n}$ is a \emph{dual algebraic lamination} $L(T)$, which is defined as follows in \cite{CHL:DualLaminationsII}: let $$L_{\epsilon}(T) := \overline{\{(g^{-\infty}, g^{\infty})| l_T(g) < \epsilon, g \in \free \}},$$
so $L_{\epsilon}(T)$ is an algebraic lamination and set $\displaystyle{L(T) := \bigcap_{\epsilon>0}L_{\epsilon}(T)}$.

For trees in $CV_n$, $L(T)$ is empty. For another example of a dual algebraic lamination, consider an atoroidal fully irreducible outer automorphism $\Psi$ and its unstable tree $T^-_{\Psi}$. The unstable tree is the limit of the sequence $T.\Psi^{-n}$ in $\overline{CV}_n$ for any free simplicial tree $T$ in $CV_n$ (see \cite{BFH:Laminations} for detailed definition). By a result of \cite{KL:InvariantLaminations}, if $\Lambda^+_{\Psi}$ is the attracting lamination associated to $\Psi$ (as given by Lemma~\ref{L:BFH lamination}), then $L(T^-_{\Psi})$ is the diagonal closure of $\Lambda^+_{\Psi}$, that is, if $(X, X^{\p}) \in \partial^2 \free$ and $(X, X^{\p\p}) \in \partial^2 \free$ are in $\Lambda^+_{\Psi}$, which is a subset of $L(T^-_{\Psi})$, and $X^{\p} \neq X^{\p\p}$ then $(X^{\p}, X^{\p\p})$ is also in $L(T^-_{\Psi})$. 

For trees in $\partial CV_n$ with dense orbits there are two more definitions given in \cite{CHL:DualLaminationsII}: 

\begin{itemize}
\item $L_{\infty}(T)$ :  For a basis $\mathfrak{B}$ of $\free$, let $L^1_{\mathfrak{B}}(T) \subset \partial \free$ be the set of one sided infinite words with respect to $\mathfrak{B}$ that are bounded in $T$. By \cite[Proposition 5.2]{CHL:DualLaminationsII} this set is independent of the basis and henceforth will be denoted $L^1(T)$. The lamination $L_{\infty}(T)$ is the algebraic lamination defined by the recurrent laminary language in $\mathfrak{B}^{\pm}$ associated to $L^1(T)$. It is shown in the same paper that this definition is also independent of the basis.

\item $L_{\mathcal{Q}}(T)$ : See Definition~\ref{D:QLamination}.
\end{itemize}
The equivalence of the three definitions of dual lamination of a tree in $\partial CV_n$ with dense orbits is established in the same paper. Note that $L_{\infty}(T)$ can also be defined for trees which don't have dense orbits but it might not be equal to $L(T)$. 
 

\subsection{Free factor system}
A free factor system of $\free$ is a finite collection of conjugacy classes of proper free factors of $\free$ of the form $\ffa = \{[A_1], \ldots, [A_k] \}$, where $k\geq 0$ and $[\cdot]$ denotes the conjugacy class of a subgroup, such that there exists a free factorization $\free = A_1 \ast \cdots \ast A_k \ast F_N$. We refer to the free factor $F_N$ as the \emph{cofactor} of $\ffa$ keeping in mind that it is not unique, even up to conjugacy. There is a partial ordering $\sqsubset$ on the set of free factor systems given as follows: $\ffa \sqsubset \ffa^{\prime}$ if for every $[A_i] \in \ffa$ there exists $[A_j^{\prime}] \in \ffa^{\prime}$ such that $A_i \subset A_j^{\prime}$ up to conjugation. The free factor systems $\emptyset$ and $\{[\free]\}$ are called \emph{trivial free factor systems}. We define \emph{rank}$\mathbf{(\ffa)}$ to be the sum of the ranks of the free factors in $\ffa$. Let $\zeta(\ffa) = k+N$. 
   
The main geometric example of a free factor system is as follows: suppose $G$ is a marked graph and $K$ is a subgraph whose non-contractible connected components are denoted $C_1, \ldots, C_k$. Let $[A_i]$ be the conjugacy class of a free factor of $\free$ determined by $\pi_1(C_i)$. Then $\ffa =\{ [A_1], \ldots, [A_k]\}$ is a free factor system. We say $\ffa$ is \emph{realized by $K$} and we denote it by $\mathcal{F}(K)$. 
 
\subsection{Relative free factor complex}\label{subsec:Relative free factor complex}
Let $\ffa$ be a non-trivial free factor system of $\free$. In \cite{HM:RelativeComplex} the complex of free factor systems of $\free$ relative to $\ffa$, denoted $\mathcal{FF}(\free; \ffa)$, is defined to be the geometric realization of the partial ordering $\sqsubset$ restricted to the set of non-trivial free factor systems $\dd$ of $\free$ such that $\ffa \sqsubset \dd$ and $\dd \neq \ffa$. The \emph{exceptional} free factor systems are certain ones for which $\rfa$ is either empty or zero-dimensional. They can be enumerated as follows: 
\begin{itemize}
 \item $\ffa = \{ [A_1], [A_2] \}$ with $\free = A_1 \ast A_2$. In this case $\rfa$ is empty. 
\item $\ffa = \{[A]\}$ with $\free = A \ast \mathbb{Z}$. In this case $\rfa$ is $0$-dimensional. 
\item $\ffa = \{ [A_1], [A_2], [A_3]\}$ with $\free =A_1 \ast A_2\ast A_3$. In this case $\rfa$ is also $0$-dimensional. 
\end{itemize}

\begin{theorem}[{\cite{HM:RelativeComplex}}]
For any non-exceptional free factor system $\ffa$ of $\free$, the complex $\rfa$ is positive dimensional, connected and hyperbolic. 
\end{theorem}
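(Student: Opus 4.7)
The plan is to establish the three properties separately, treating hyperbolicity as the main technical obstacle and building on the corresponding techniques from the absolute case.

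For positive dimensionality, I would argue directly from the enumeration of exceptional systems. If $\ffa$ is non-exceptional, then $\zeta(\ffa) \geq 3$ (after the enumeration rules out the small-complexity configurations). In this regime, starting from a relative marked graph $G$ realizing $\ffa$, I can choose two nested non-trivial subgraph extensions $K_1 \subsetneq K_2 \subsetneq G$ whose non-contractible components each carry $\ffa$ but whose associated free factor systems $\dd_1 = \mathcal{F}(K_1)$ and $\dd_2 = \mathcal{F}(K_2)$ satisfy $\ffa \sqsubsetneq \dd_1 \sqsubsetneq \dd_2 \sqsubsetneq \{[\free]\}$. This produces an edge in $\rfa$, giving positive dimension.

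For connectedness, I would use relative outer space $\relCV$ as a parameter space. Given two vertices $\dd, \dd' \in \rfa$, realize both in marked graphs $G_{\dd}, G_{\dd'} \in \relCV$ by suitable collapses, and then connect $G_{\dd}$ to $G_{\dd'}$ by a finite sequence of Stallings folds (or Whitehead moves) in $\relCV$. Each intermediate graph contains a non-contractible proper subgraph compatible with $\ffa$ and supplies a free factor system extending $\ffa$. By tracking which intermediate subgraphs remain non-trivial across each fold, one obtains a finite sequence of vertices in $\rfa$ which is an edge-path, so $\rfa$ is connected.

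The main obstacle is hyperbolicity. Here I would follow the strategy pioneered by Bestvina--Feighn and streamlined by Kapovich--Rafi in the absolute setting, adapted to the relative context. The plan has two layers. First, one proves hyperbolicity of the relative free \emph{splitting} complex $\mathcal{FS}(\free,\ffa)$ by a disk-diagram/surgery argument: given a geodesic triangle of splittings, one measures coarse distance via one-edge free splittings appearing in common unfoldings and checks the thin-triangle condition. Second, one constructs a coarse Lipschitz, coarsely surjective projection $\pi : \mathcal{FS}(\free,\ffa) \to \rfa$ sending a splitting $S$ to (a bounded set containing) the free factor system realized by any non-contractible complementary component, and shows that quasi-geodesics in $\mathcal{FS}(\free,\ffa)$ project to unparametrized quasi-geodesics in $\rfa$. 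Hyperbolicity of $\rfa$ then follows from the Kapovich--Rafi criterion: a coarsely Lipschitz, coarsely surjective map from a hyperbolic space to a geodesic space, for which preimages of geodesics contain quasi-geodesics, forces the target to be hyperbolic.

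The hardest step in that outline is the relative free splitting complex piece, because in the relative setting the vertex groups of splittings are constrained to carry $\ffa$, which breaks some of the flexibility of the absolute disk-surgery argument; the fix is to restrict surgeries to edges that are transverse to the peripheral structure carrying $\ffa$, and to verify that the resulting thin-triangle constant is independent of the chosen free splittings. Once that is in place, the projection step is essentially formal and the theorem follows.
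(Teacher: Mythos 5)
The first thing to say is that the paper does not prove this statement at all: it is quoted directly from Handel--Mosher \cite{HM:RelativeComplex} and used as a black box, so there is no internal proof to compare your proposal against; it has to be measured against the cited literature.

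Against that standard, your outline correctly identifies the overall architecture (prove hyperbolicity of the relative free splitting complex, then push it to $\rfa$ via a coarsely Lipschitz projection and a Kapovich--Rafi-type criterion), but it has a genuine gap: essentially all of the mathematical content of the theorem is concentrated in the two steps you defer. Proving that the relative free splitting complex is hyperbolic \emph{is} the main theorem of \cite{HM:RelativeComplex}, and Handel--Mosher do it by generalizing their fold-path combing argument from the absolute case, not by a disk-diagram surgery argument; your proposed fix of ``restricting surgeries to edges transverse to the peripheral structure'' is a slogan rather than an argument, and making a surgery-style proof work relative to $\ffa$ is itself a substantial project. Likewise, calling the projection step ``essentially formal'' understates it: one must actually verify that fold paths (or surgery paths) project to unparametrized quasi-geodesics in $\rfa$, which is a real argument. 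Two smaller points: non-exceptionality does not reduce to $\zeta(\ffa)\geq 3$ --- the case $\free = A_1\ast A_2\ast A_3$ has $\zeta(\ffa)=3$ yet is exceptional precisely because no chain $\ffa \sqsubsetneq \dd_1 \sqsubsetneq \dd_2$ of non-trivial free factor systems exists --- so for positive dimension you must actually exhibit the two-step chain in each non-exceptional configuration rather than assert its existence; and your connectedness sketch needs to justify why consecutive graphs in a fold sequence yield free factor systems at bounded distance in $\rfa$, which is where the work lies.
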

\subsection{Fully irreducible relative to $\ffa$}
An outer automorphism $\oo \in \relOut$ is called \emph{irreducible relative to} $\mathbf{\ffa}$ if there is no $\oo$-invariant non-trivial free factor system that properly contains $\ffa$. If every power of $\oo$ is irreducible relative to $\ffa$ then we say that $\oo$ is \emph{fully irreducible relative to} $\mathbf{\ffa}$ (or relatively fully irreducible).  

Let $\oo \in \relOut$. Then by \cite[Lemma 2.6.7]{BFH:Tits} there exists a relative train track map for $\oo$ denoted $\phi : G \to G$ and filtration $\Rfltr$ such that $\ffa = \mathcal{F}(G_s)$ for some filtration element $G_s$. If $\oo$ is fully irreducible relative to $\ffa$ then $\ffa = \mathcal{F}(G_{r-1})$ and the top stratum $H_r$ is an EG stratum with Perron-Frobenius eigenvalue $\PFevalue>1$. 

For $\oo$, a fully irreducible outer automorphism relative to $\ffa$, let $\lamination$ be the attracting lamination associated to the top stratum $H_r$. We will denote by $\lamination(G)$ the realization of $\lamination$ in the graph $G$. 
\subsection{Relative outer space}\label{subsec:RelativeOuterSpace}
In \cite{GL:RelativeOuterSpace}, Guirardel and Levitt define relative outer space for a countable group that splits as a free product $$G = G_1 \ast \ldots \ast G_k \ast F_N$$ where $N+k \geq 2$. In \cite{H:HorbezBoundary}, Horbez shows that the closure of relative outer space is compact and characterizes the trees in the closure of relative outer space. 
 
In our setting $G = \free$ and it splits as $\free = A_1 \ast \ldots \ast A_k \ast F_N$ for $k\geq 0$. Let $\ffa = \{[A_1], \ldots, [A_k]\}$ be the associated free factor system of $\free$. The group of automorphisms associated to such a decomposition is $\relOut$ consisting of those outer automorphisms that preserve the conjugacy class of each $A_i$. 

Subgroups of $\free$ that are conjugate into a free factor in $\ffa$ are called \emph{peripheral} subgroups. An $(\free, \ffa)$-tree is an $\mathbb{R}$-tree with an isometric action of $\free$, in which every peripheral subgroup fixes a unique point. A $\emph{Grushko}$ $(\free,\ffa)$-tree is a minimal, simplicial metric $(\free, \ffa)$-tree whose set of point stabilizers is exactly the free factor system $\ffa$ and edge stabilizers are trivial. Two $(\free, \ffa)$-trees are equivalent if there exists an $\free$-equivariant isometry between them. An $(\free,\ffa)$-tree $T$ is \emph{small} if arc stabilizers in $T$ are either trivial, or cyclic and non-peripheral. A small $(\free, \ffa)$-tree $T$ is \emph{very small} if in addition the non-trivial arc stabilizers in $T$ are closed under taking roots and tripod stabilizers are trivial. 

The \emph{unprojectivized relative outer space} $\mathcal{O}(\free, \ffa)$ is the space of all equivalence classes of Grushko $(\free, \ffa)$-trees. \emph{Relative outer space}, denoted $\relCV$, is the space of homothety classes of trees in $\mathcal{O}(\free, \ffa)$. 

\begin{example}
\begin{enumerate}[(a)]
\item Let $\free = A_1 \ast A_2$. In this case relative outer space is just a point represented by a one edge splitting with vertex stabilizers $A_1$ and $A_2$ and trivial edge stabilizer. 
\item Let $\free = A_1 \ast \mathbb{Z}$. In this case relative outer space is one dimensional. A schematic is shown in Figure~\ref{F:RelativeOuterSpaceEx}(i). The central vertex $v$ in (i) corresponds to the graph shown in (ii) and the end points of the one simplices in (i) correspond to graphs shown in (iii). 
\begin{figure}[h]
\centering
\includegraphics[scale = .6]{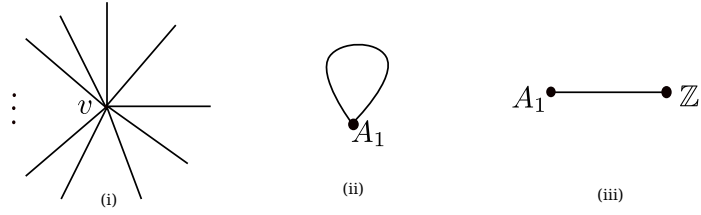} 
\caption{Relative Outer Space} \label{F:RelativeOuterSpaceEx}
\end{figure}

\item Let $\free = A_1 \ast A_2 \ast A_3$. In this case relative outer space is unbounded with respect to the simplicial metric. 
\end{enumerate}
\end{example}
The graph of groups decomposition of $\free$ represented in Figure~\ref{F:RelativeRose} is called a \emph{relative rose}.  
\begin{figure}[h]
\centering
\includegraphics[scale = .5]{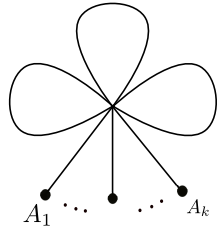} 
\caption{Relative Rose} \label{F:RelativeRose}
\end{figure}
\subsection{Boundary of $\free$}\label{subsec:Boundary}
Given $\free$ and a fixed basis $\mathfrak{B}$ of $\free$, let $\op{Cay}(\free, \mathfrak{B})$ be the Cayley graph of $\free$ with respect to $\mathfrak{B}$. The space of ends of the Cayley graph is called the \emph{boundary of} $\free$, denoted by $\partial \free$. Let $\Delta$ denote the diagonal in $\partial \free \times \partial \free$. Let $\partial^2 \free := (\partial \free \times \partial \free - \Delta) / \mathbb{Z}_2$ be the \emph{space of unoriented, bi-infinite geodesics} in $\op{Cay}(\free, \mathfrak{B})$. Finite paths $\gamma$ in $\op{Cay}(\free, \mathfrak{B})$ determine two-sided cylinder sets, denoted $C(\gamma)$, which form a basis for the topology of $\partial^2\free$. Compact open sets are given by finite disjoint union of cylinder sets. 

Let $\RelativeBasis$ be a basis of $\free$ such that a basis of $\ffa$ is a subset of $\RelativeBasis$. Specifically, $$\RelativeBasis = \{a_{11}, \ldots a_{11_s}, \ldots , a_{i1}, \ldots, a_{ii_s}, \ldots, a_{k1}, \ldots, a_{kk_s}, b_1, \ldots, b_p\}$$ where $a_{ij} \in A_i$ and $b_i \notin A$ for any $[A] \in \ffa$. 
If $\op{rank}(\ffa) = \op{rank}(\free)$ then $p=0$. We call such a basis a \emph{relative basis} of $\free$. 

Given a free factor $A$ say a one-sided infinite geodesic starting at the base point in $\op{Cay}(\free, \RelativeBasis)$ is in $\partial A$ if eventually it crosses only edges labeled by words in $A$. Note that $\partial A$ is an $\free$-equivariant set. Define $\partial \ffa := \bigsqcup_{i=1}^k \partial A_i$ and let $\partial^2 A$ be the closure of the set of bi-infinite geodesics in $\partial^2 \free$ which are lifts of conjugacy classes in $A$. Let $\partial^2 \ffa := \bigsqcup_{i=1}^k \partial^2 A_i$.      

Let $[\free \setminus \ffa]$ be the set of all conjugacy classes in $[\free]$ that are not contained in a free factor in $\ffa$ and let $\free \setminus \ffa$ be the set of all words in $\free$ that are not contained in a free factor in $\ffa$. Note that an element of $\free \setminus \ffa$ can be contained in the free product of distinct free factors of $\ffa$.  

\subsection{Relative Currents}\label{subsec:RelativeCurrents}
In \cite{G:RelativeCurrents}, we relativized the notion of measured currents as defined in \cite{Martin}. Let $\ffa$ be a non-trivial free factor system such that $\zeta(\ffa)\geq 3$. We define $\mathbf{Y} := \partial^2 \free \setminus \partial^2 \ffa$ with the subspace topology from $\partial^2\free$. 
Let $\mathcal{C}(\mathbf{Y})$ be the collection of compact open sets in $\mathbf{Y}$. 

A \emph{relative current} is an additive, non-negative, $\free$-invariant and flip-invariant function on $\mathcal{C}(\mathbf{Y})$. A relative current is uniquely determined by its values on cylinder sets in $\mathcal{C}(\mathbf{Y})$ determined by finite paths corresponding to words in $\free \setminus \ffa$.  

\begin{example}[Relative current]
Consider a conjugacy class $\alpha \in [\free \setminus \ffa]$ such that $\alpha$ is not a power of any other conjugacy class in $[\free]$ and let $w$ be a word in $\free \setminus \ffa$. Let $C(w)$ be the cylinder set in $\mathbf{Y}$ corresponding to a path determined by $w$ starting at the base point of the Cayley graph. Then $\eta_{\alpha}(w):= \eta_{\alpha}(C(w))$ is the number of occurrences of $w$ in the cyclic words $\alpha$ and $\overline{\alpha}$. Equivalently, we can also count the number of lifts of $\alpha$ that cross the path determined by $w$ starting at the base point in the Cayley graph. Such currents and their multiples are called \emph{rational relative currents}. For example, let $\free = \la a, b\ra, \ffa = \{[\la a \ra]\}$ and let $\alpha = abaab$. Then $ \eta_{\alpha}(b) = 2, \eta_{\alpha}(ba) = 2, \eta_{\alpha}(abab) = 1$. \end{example}

Let $\rc$ denote the space of relative currents. A subbasis for the topology of $\rc$ is given by the sets $\{\eta \in \rc: |\eta(C) - \eta_0(C)|\leq \epsilon \}$ where $\eta_0 \in \rc$, $C \in \mathcal{C}(\mathbf{Y})$ and $\epsilon >0$. Let $\prc$ be the space of projectivized relative currents, which is shown to be compact in \cite{G:RelativeCurrents}. Let $\mrc$ be the closure in $\prc$ of the relative currents corresponding to conjugacy classes in $[\free \setminus \ffa]$ which are contained in a non-trivial free factor system containing $\ffa$. The main result about relative currents is the following:

\begin{thmB}[{\cite{G:RelativeCurrents}}]
Let $\ffa$ be a non-trivial free factor system of $\free$ such that $\zeta(\ffa) \geq 3$. Let $\oo \in \relOut$ be fully irreducible relative to $\ffa$. Then $\oo$ acts on $\mrc$ with uniform north-south dynamics: there are only two fixed points $\StableCurrent$ and $\UnstableCurrent$ and 
for every compact set $K$ of $\mrc$ that does not contain $\UnstableCurrent$ (rep. $\StableCurrent$), and for every neighborhood $U$ (resp. $V$) of $\StableCurrent$ (resp. $\UnstableCurrent$), there exists $N \geq 1$ such that for all $n \geq N$ we have $\oo^n(K) \subseteq U$ (resp. $\oo^{-n}(K) \subseteq V$).    
\end{thmB}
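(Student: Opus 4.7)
The plan is to adapt the classical north-south dynamics argument for fully irreducible outer automorphisms on projective measured currents (Martin, Uyanik) to the relative train track framework. First, I would construct the fixed currents. Pick a relative train track representative $\phi : G \to G$ of $\oo$ with filtration $\Rfltr$ such that $\ffa = \mathcal{F}(G_{r-1})$ and the top stratum $H_r$ is EG with Perron-Frobenius eigenvalue $\PFevalue > 1$. Using the left PF eigenvector of the transition matrix of $H_r$, assign weights to finite edge-paths of $G$ that cross $H_r$, and define $\StableCurrent \in \rc$ by counting with these weights the occurrences of each cylinder path in the attracting lamination $\lamination$. Since the generic leaves of $\lamination$ cross $H_r$, the resulting current lives on $\mathbf{Y}$. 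Dually define $\UnstableCurrent$ from a relative train track representative of $\oo^{-1}$. By construction $\oo \cdot \StableCurrent = \PFevalue \cdot \StableCurrent$ and $\oo^{-1} \cdot \UnstableCurrent = \PFevalue^{-1} \cdot \UnstableCurrent$, so both project to $\oo$-fixed points in $\mrc$.

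Next, I would prove pointwise attraction. Fix $\eta \in \mrc$ projectively distinct from $\UnstableCurrent$. Since $\mrc$ is the closure of rational currents $\eta_\alpha$ with $\alpha$ carried by a non-trivial free factor system strictly containing $\ffa$, it suffices to analyze $\oo^n \eta_\alpha$ for such $\alpha$ and then pass to limits. Because $\oo$ is fully irreducible relative to $\ffa$, no $\oo$-power preserves the free factor system carrying $\alpha$, so the relative train track map forces the tightened iterates $[\phi^n(\alpha)]$ to develop longer and longer $r$-legal subpaths in $H_r$, with a lower bound on the proportion of the cyclic word that is $r$-legal. Perron-Frobenius convergence for the transition matrix of $H_r$ then implies that, after renormalization, the frequencies of finite subwords in $\oo^n \eta_\alpha$ converge to the frequencies prescribed by $\StableCurrent$. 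Continuity of the current functional on cylinder sets extends the convergence to arbitrary $\eta \neq \UnstableCurrent$ in $\mrc$.

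Finally, I would upgrade pointwise to uniform convergence on compacta by compactness of $\prc$. If uniform convergence failed on some compact $K \subset \mrc \setminus \{\UnstableCurrent\}$, one extracts $\eta_k \in K$ with $\eta_k \to \eta_\infty \neq \UnstableCurrent$ and $n_k \to \infty$ such that $\oo^{n_k}\eta_k$ stays outside a fixed neighborhood of $\StableCurrent$, contradicting a uniform version of the legal-segment lower bound along the sequence. The symmetric statement for $\oo^{-n}$ and $\UnstableCurrent$ follows by running the whole argument for $\oo^{-1}$. The main obstacle is the quantitative uniform lower bound on $r$-legal segment growth for currents in $\mrc$: in the absolute case this is supplied by Bestvina-Feighn-Handel weak attraction, which guarantees that any line outside the non-attracting lamination of $\oo^{-1}$ develops long legal subwords under forward iteration. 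In the relative setting one needs the analogous statement for conjugacy classes in $[\free \setminus \ffa]$ carried by proper free factor systems strictly containing $\ffa$, taking care of peripheral subpaths and of intermediate zero and NEG strata that could contribute non-legal pieces. The definition of $\mrc$ is calibrated precisely so that, after passing to limits, no pathological current concentrated near $\partial^2 \ffa$ can obstruct this bound, and making this calibration rigorous is the technical heart of the proof.
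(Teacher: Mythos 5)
First, a point of bookkeeping: this paper does not prove Theorem~B. It is imported verbatim, with citation, from the author's earlier paper \cite{G:RelativeCurrents}, so there is no in-paper proof to compare your proposal against. What follows is therefore an assessment of your outline on its own terms, measured against the standard architecture of such arguments (Martin, Uyanik in the absolute case), which is presumably what \cite{G:RelativeCurrents} relativizes.

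Your overall strategy --- construct $\StableCurrent$ and $\UnstableCurrent$ from Perron--Frobenius data of the top EG stratum, prove attraction via growth of $r$-legal segments, and invoke compactness of $\prc$ --- is the right one, and you correctly locate the technical heart in the relative analogue of weak attraction. But there is a genuine gap in the middle step. You assert that because $\mrc$ is the closure of certain rational currents, "it suffices to analyze $\oo^n\eta_\alpha$ for such $\alpha$ and then pass to limits," and that "continuity of the current functional on cylinder sets extends the convergence to arbitrary $\eta \neq \UnstableCurrent$." This is a non sequitur: pointwise convergence $\oo^n\eta_\alpha \to \StableCurrent$ on a dense set of rational currents does not transfer to a limit $\eta = \lim_k \eta_{\alpha_k}$, because the two limits ($k\to\infty$ and $n\to\infty$) do not commute without a uniformity that you have not yet established at that stage --- and indeed cannot hold near $\UnstableCurrent$, which is itself a limit of rational currents in $\mrc$. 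The standard repair, which your final paragraph gestures at but does not build the argument around, is to define a legality or ``goodness'' ratio directly for \emph{every} current in $\mrc$ (the proportion of the $\eta$-mass of length-$m$ cylinders lying on long $r$-legal words), show that this ratio admits a uniform positive lower bound on the complement of any neighborhood of $\UnstableCurrent$, and show that it improves under iteration of $\oo$ at a definite rate. Only with such a quantitative statement valid for all of $\mrc\setminus\{\UnstableCurrent\}$ does one get pointwise attraction for irrational currents, and the same estimate then yields the uniform statement on compacta. Secondary, smaller omissions: you do not verify that $\StableCurrent$ and $\UnstableCurrent$ actually lie in $\mrc$ (this is where the defining condition on $\mrc$ --- closure of currents of classes carried by proper free factor systems strictly containing $\ffa$ --- must be used), and you do not argue that these are the \emph{only} fixed points, though the latter would follow once full pointwise attraction is in place.
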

\section{North-south dynamics on the closure of relative outer space}\label{sec:NorthSouth}
Our method to prove Theorem C is a generalization of the proof by Levitt and Lustig \cite{LL:NSDynamics} to show that a fully irreducible automorphism acts with uniform north-south dynamics on the closure of outer space. Let $\oo$ be a fully irreducible automorphism relative to $\ffa$. 

\begin{notation}\label{N:maps} Let $\phi_0^{\p} : G^{\p} \to G^{\p}$ be a relative train track representative of $\oo$, where $G^{\p}$ is a marked metric graph in $CV_n$, with filtration $\emptyset = G_0 \subset G_1 \subset \ldots \subset G_r = G^{\p}$ such that $\ffa = \mathcal{F}(G_{r-1})$ and the top stratum $H_r$ is an EG stratum with Perron-Frobenius eigenvalue $\mathbf{\PFevalue} >1$. We denote by $\lamination$ the attracting lamination associated to $H_r$ and by $\lamination(G^{\p})$ its realization in $G^{\p}$. Let $T_{G^{\p}}$ be the universal cover of $G^{\p}$ and let $\phi^{\p}: \AbsTree \to \AbsTree$ be a lift of the map $\phi_0^{\p}: G^{\p} \to G^{\p}$ which satisfies $\oo(g) \circ \phi^{\p} = \phi^{\p} \circ g$ for $g \in \free$.  
\end{notation}

\begin{definition}[$\ffa$-train track map]\label{D: A TrainTrack}Let $\RelTTTree$ be the tree in $\RelCV$ obtained by equivariantly collapsing the maximal $\phi^{\p}$-invariant proper forest of $\AbsTree$. We denote the collapse map by $\pi: T_{G^{\p}} \to \RelTTTree$. See Figure~\ref{F:Collapse}. The map $\phi^{\p}: T_{G^{\p}} \to T_{G^{\p}}$ descends to a map $\phi: \RelTTTree \to \RelTTTree$ representing $\oo$. Let $G = T_G / \free$ and $\phi_0: G \to G$ be the corresponding map. We say $\phi_0$ is an \emph{$\ffa$-train track} representative of $\oo$. \end{definition}

\begin{figure}[h]
\centering
\includegraphics[scale=.5]{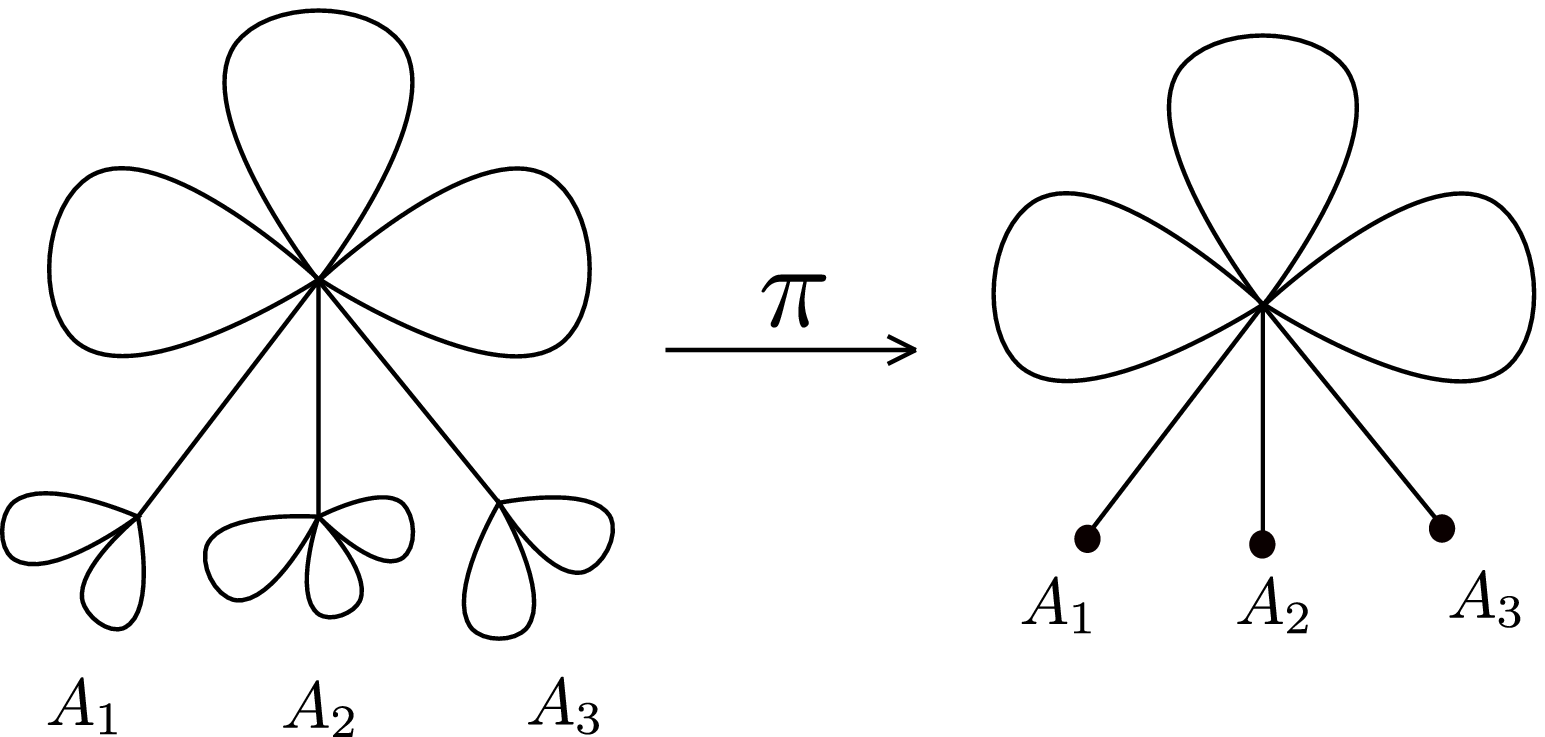}
\caption{}\label{F:Collapse}
\end{figure}

\subsection{Stable and unstable trees}
$\relOut$ acts on $\RelCVClo$ via $$ l_{T\Psi}(\alpha) = l_T(\Psi(\alpha))$$ for $\Psi \in \relOut$ and for every conjugacy class $\alpha \in [\free]$, where $l_T(\alpha)$ is the translation length of $\alpha$ in $T$. A stable tree $\StableTreePhi$ of $\oo$ is defined as follows: 
$$ \StableTreePhi = \lim_{p \to \infty} \frac{\RelTTTree \phi^p}{\PFevalue^p}.$$
In other words, $$ l_{\StableTreePhi}(\alpha) = \lim_{p \to \infty}\frac{l_{\RelTTTree}(\phi^p\alpha)}{\PFevalue^p}.$$

  
The stable tree is well defined projectively and hence we denote the projective class by $\StableTree$. The unstable tree, denoted $\UnstableTree$, of $\oo$ is defined to be the stable tree of $\oo^{-1}$. The fact that $T_{\oo}^{\pm}$ do not depend on the choice of the $\ffa$-train track map $\phi$ follows from the same arguments as in \cite[Lemma 3.4]{BFH:Laminations} whose relative version is stated below. 

\begin{proposition}\label{P:Proof Strategy}
Let $T \in \relCVClo$. Suppose there exists a tree $T_0 \in \relCV$, an equivariant map $h : T_0 \to T$, and a bi-infinite geodesic $\gamma_0 \subset T_0$ representing a generic leaf $\gamma$ of $\lamination$ such that $h(\gamma_0)$ has diameter greater than 2BCC($h$). Then
 \begin{enumerate}[(a)]
 \item $h(\gamma_0)$ has infinite diameter in $T$.  
 \item there exists a neighborhood $V$ of $T$ such that $\oo^p(V)$ converges to $\StableTree$ uniformly as $p \to \infty$. 
\end{enumerate}\end{proposition}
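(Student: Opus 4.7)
The proposition is the relative analog of \cite[Lemma~3.4]{BFH:Laminations}. My plan is to combine bounded cancellation with the birecurrence of generic leaves of $\lamination$, and then to iterate using the train track structure on the top stratum $H_r$.

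For part (a), I would first extract a finite subpath $\sigma_0 = [x_0, y_0] \subset \gamma_0$ with $d_T(h(x_0), h(y_0)) > 2\,\op{BCC}(h)$; such a $\sigma_0$ exists by hypothesis, and I set $\delta := d_T(h(x_0), h(y_0)) - 2\,\op{BCC}(h) > 0$. Since $\gamma$ is a generic, hence birecurrent, leaf of $\lamination$, every finite subword appears along $\gamma$ infinitely often, so for each $N$ the path $\gamma_0$ contains a subpath $\rho_N$ which is a consecutive concatenation of $N$ distinct $\free$-translates $g_1\sigma_0, \ldots, g_N\sigma_0$ (separated by intermediate segments). Applying $h$ equivariantly, each $h(g_i\sigma_0) = g_i\, h(\sigma_0)$ is an arc joining $g_i h(x_0)$ to $g_i h(y_0)$ of arc-length at least $2\,\op{BCC}(h) + \delta$. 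Bounded cancellation guarantees that in the tightened image of $\rho_N$, at most $\op{BCC}(h)$ is cancelled at each of the $2N$ junctions, so each translate $g_i h(\sigma_0)$ contributes at least $\delta$ to the straight arc in $T$ between its outermost endpoints. Letting $N \to \infty$ forces $h(\gamma_0)$ to have infinite diameter.

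For part (b), I would show that the rescaled trees $T \cdot \oo^p / \PFevalue^p$ converge to $\StableTree$ in $\relCVClo$, and that this is uniform in an open neighborhood of $T$. The main tool is the twisted map $h_p := h \circ (\phi^{\p})^p : \AbsTree \to T$, which satisfies $h_p(gx) = \oo^p(g)\, h_p(x)$ and thus defines an $\free$-equivariant map $\AbsTree \to T \cdot \oo^p$. Because $(\phi^{\p})^p$ stretches subpaths of $\gamma_0$ by $\PFevalue^p$ on the top stratum, and because part (a) uniformly controls the $h$-image of $\sigma_0$, the map $h_p$ stretches $\lamination$-paths by approximately $\PFevalue^p$ up to bounded additive error. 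This forces the translation lengths of conjugacy classes whose $\AbsTree$-axes run along $\lamination$ to scale like $\PFevalue^p$ under $\oo^p$, matching $\PFevalue^p$ times the $\StableTree$-length. Peripheral classes (those in a free factor of $\ffa$) have zero $\StableTree$-translation length and are preserved by $\oo$ up to conjugacy, so their contribution matches in the limit. The neighborhood $V$ is then obtained from openness of the hypothesis: for $T'$ close to $T$, one can realize an equivariant $h': T_0 \to T'$ with $\op{BCC}(h')$ close to $\op{BCC}(h)$ and $h'(\gamma_0)$ still of diameter $> 2\,\op{BCC}(h')$, so the same iteration argument applies uniformly in $V$.

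The main obstacle is the presence of non-trivial peripheral point stabilizers for trees in $\relCVClo$: unlike the absolute case, the leaf $\gamma$ may project arbitrarily close to such vertices, which can collapse large portions of $h(\gamma_0)$ and complicate both the cancellation estimates and the identification of the limit tree. A related, more technical difficulty is ensuring that $\op{BCC}(h)$ varies semi-continuously in $T$ (needed for the neighborhood step), and that on peripheral conjugacy classes the rescaled length functions really do converge to those of $\StableTree$ rather than contributing spurious mass in the limit.
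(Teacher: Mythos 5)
Your overall strategy is the right one---this is the relative version of \cite[Lemma 3.4]{BFH:Laminations} and \cite[Proposition 6.1]{LL:NSDynamics}---but the cancellation accounting in your part (a) has a genuine gap. At a concatenation point $b$ of segments $[a,b]$ and $[b,c]$ in $T_0$, the tree identity gives $d_T(h(a),h(c)) = d_T(h(a),h(b)) + d_T(h(b),h(c)) - 2\,d_T\bigl(h(b),[h(a),h(c)]\bigr)$, and bounded cancellation only bounds $d_T\bigl(h(b),[h(a),h(c)]\bigr)$ by $\op{BCC}(h)$; so each junction can destroy up to $2\op{BCC}(h)$ of displacement, not $\op{BCC}(h)$ as you claim. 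With $N$ good segments of displacement $2\op{BCC}(h)+\delta$ separated by intermediate segments you have $2N$ junctions, and the resulting lower bound is $N\bigl(2\op{BCC}(h)+\delta\bigr) - 4N\op{BCC}(h) = N\bigl(\delta - 2\op{BCC}(h)\bigr)$, which is vacuous when $\delta \leq 2\op{BCC}(h)$ --- and the hypothesis only guarantees $\delta>0$. The paper's proof avoids this with the central-subsegment device: take $\sigma$ to be the middle of $[h(A_0),h(B_0)]$ obtained by trimming $\op{BCC}(h)$ off each end; since $A_0,B_0$ lie on any larger segment $[P,Q]$ of $\gamma_0$, their images lie within $\op{BCC}(h)$ of $[h(P),h(Q)]$, so the whole arc $\sigma$ (not merely $\delta$ worth of net displacement) is forced onto the tightened image. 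The paper then chooses $m_0$ so that $\phi^{m_0}(e)$ contains a translate of $AB$ for every edge $e$ of $\RelTTTree$, upgrading the conclusion to the linear estimate $l_T(\nu(\phi^{m_0}\beta)) \geq l_T(\sigma)\,|\beta|$ for every leaf segment $\beta$. Your birecurrence argument, even once repaired, yields only unboundedness and not this quantitative estimate, which is exactly what part (b) consumes.

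For part (b) your sketch stops short of the actual content. What must be proved is that $l_T(\nu(\phi^p\beta))/(\PFevalue^p d_+(\beta))$ converges to a single constant $c$, uniformly, for \emph{arbitrary} edge paths $\beta$ of $\RelTTTree$ --- not only for paths running along $\lamination$. A fundamental domain for the axis of a general non-peripheral $g$ need not be a leaf segment; the paper decomposes it into finitely many leaf segments, controls the error by bounded cancellation, and then passes to $||g||_{T\phi^p} = \lim_n ||g^n||_{T\phi^p}/n$. Your restriction to ``conjugacy classes whose axes run along $\lamination$'' silently excludes most conjugacy classes, and matching peripheral lengths is not enough to identify the limit of the length functions. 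Finally, for the neighborhood $V$ the resolution is not semicontinuity of $\op{BCC}$: the uniformity of the convergence depends only on $\op{Lip}(\nu)$, and for $T'$ close to $T$ one can choose $h'$ with $\op{Lip}(h')$ close to $\op{Lip}(h)$, so the same uniform estimate applies throughout $V$ without re-verifying the diameter hypothesis.
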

The proof of Proposition~\ref{P:Proof Strategy} is essentially the same as in the absolute case in \cite[Lemma 3.4]{BFH:Laminations} and \cite[Proposition 6.1]{LL:NSDynamics}. For completeness, we give a proof here. 

\begin{proof}[Proposition~\ref{P:Proof Strategy} (a)]
Fix an equivariant map $\mu : \RelTTTree \to T_0$ with some bounded backtracking. Let $\gamma_0$ be the tightened image of $\gamma$, a generic leaf of $\lamination$, under $\mu$. Let $h: T_0 \to T$ be the $\free$-equivariant map as given in the proposition. If $AB \subset \RelTTTree$ is a segment, denote by $l_T(\nu(AB))$ the length of the tightened image of $AB$ under $\nu = h \circ \mu$. Let $\op{Lip}(\nu)$ be the Lipschitz constant of $\nu$ and let $\op{BCC}(\nu)$ be the bounded backtracking constant. We have $\op{BCC}(\nu) \leq \op{Lip}(\mu)\op{BCC}(\mu)+ \op{Lip}(h)\op{BCC}(h)$.

By assumption, there is a segment $A_0B_0$ in $\gamma_0$ such that its image in $T$ by $h$ has length greater than $2\op{BCC}(h)$. Let $\sigma$ be the central subsegment of $h(A_0)h(B_0)$ whose length is $l_T(h(A_0)h(B_0))-2\op{BCC}(h)$. We can find a segment $AB \subset \gamma$ such that its image by $\mu$ contains $A_0 B_0$ and hence its tightened image by $\nu$ contains $\sigma$. Choose $m_0$ such that $\phi^{m_0}(e)$ contains a translate of $AB$ for every edge $e$ in $\RelTTTree$. 
If $\beta$ is any leaf segment contained in $\lamination$, then $l_T(\nu(\phi^{m_0}(\beta))) \geq l_T(\sigma) |\beta|$ where $|\beta|$ is the simplicial length of $\beta$ in $\RelTTTree$. Since $\phi^{m_0}(\beta)$ is also a subsegment of $\gamma_0$ if $\beta$ is, we conclude that $h(\gamma_0)$ has infinite diameter in $T$.  

\end{proof}

\begin{proof}[Proposition~\ref{P:Proof Strategy}(b)]
Since $h(\gamma_0)$ has infinite diameter in $T$, for every edge $e \in \RelTTTree$, the length $l_T(\nu(\phi^p(e)))$ tends to infinity with $p$. Let $\beta$ be an arbitrary edge path in $\RelTTTree$ and let $$d_+(\beta) := \lim_{p \to \infty} \frac{l_{\RelTTTree}(\phi^p(\beta))}{\PFevalue^p}.$$ 

We claim that for an arbitrary edge path $\beta$ in $\RelTTTree$, $$ \lim_{p \to \infty} \frac{l_T(\nu(\phi^p(\beta)))}{\PFevalue^p d_+(\beta)} = c$$ and the convergence is uniform (depending only on $Lip(\nu)$), that is, it is independent of $\beta$. Indeed, if $\beta$ is a subsegment of a generic leaf of $\lamination(\RelTTTree)$, then the claim can be proved using Perron-Frobenius theorem applied to $\phi$. If $\beta$ is not a leaf subsegment then it can be written as a concatenation of finitely many leaf segments and then by using bounded cancellation the claim can be proved. See proof of \cite[Lemma 7.1, 7.2]{LL:NSDynamics} for details. 

We now give a proof of Proposition~\ref{P:Proof Strategy}(b). Let $g \in \free$ be a nonperipheral conjugacy class. For $n\geq 1$, let $\beta_n$ be a fundamental domain for the action of $g^n \in \free$ on $\RelTTTree$. Let $||g||_T$ be the translation length of $g$ in $T$. Since $l_T(\nu(\phi^p(\beta_n))) - 2\op{BCC}(\nu) \leq ||\oo^p (g^n)||_T = ||g^n||_{T\phi^p} \leq l_T(\nu(\phi^p(\beta_n)))$ and $d_+(\beta_n)=||g^n||_{\StableTree}$, by the claim, we get 
$$ \frac{||g^n||_{T\phi^p}}{c \PFevalue^p} \to ||g^n||_{\StableTree} \text{ as } p \to \infty.$$ Since $||g||_T = \lim_{n \to \infty} ||g^n||_T / n$, we get that $T$ converges to $\StableTree$ under forward iteration by $\oo$. 

For $T^{\p}$ close to $T$, there exists $h^{\p}: T_0 \to T^{\p}$, linear on edges such that images of edges have approximatey the same length in $T^{\p}$ as in $T$. Thus $\op{Lip}(h)$ is close to $\op{Lip}(h^{\p})$ and thus $\op{Lip}(\nu^{\p})$ is close to $\op{Lip}(\nu)$. Since the convergence in the claim depends only on the lipschitz constant of $\nu$, we can find a small neighborhood $V$ of $T$ where the convergence is uniform.

\end{proof}

Our goal now is to prove that every tree $T \in \relCVClo$ satisfies the assumptions of Proposition~\ref{P:Proof Strategy} if we allow $\gamma$ to be either in $\lamination$ or $\Rlamination$. We prepare ourselves for this task by proving some results about Whitehead graphs, transverse coverings and $\mathcal{Q}$ map in the next three sections which will then be put together in Section~\ref{subsec:NS for trees proof} to complete the proof of Theorem C. 
\subsection{Relative Whitehead graph}\label{subsec:Relative Wh graph} The main lemma in this section is Lemma~\ref{L:WhGraph} which is used in the proof of Lemma~\ref{L:Tcases}. We first recollect some observations in the absolute case about the Whitehead graph for a fully irreducible automorphism. We then define a relative Whitehead graph and make similar observations for a fully irreducible automorphism relative to $\ffa$. 

Let $\psi: \Gamma \to \Gamma$ be a train track representative of a fully irreducible automorphism where $\Gamma \in CV_n$ and let $\Lambda^+_{\psi}$ be the attracting lamination. 

\begin{definition}[Whitehead graph \cite{BFH:Laminations}]\label{D:WhGraph} At a vertex $v$ of $\Gamma$ the \emph{Whitehead graph}, denoted $\op{Wh}(v)$, is defined as follows: the vertices are given by the outgoing edges incident at $v$ and two vertices are joined by an edge if the corresponding outgoing edges in $\Gamma$ form a \emph{$\Lambda_{\psi}^+$-legal turn}, that is, there is a $\psi$-iterate of an edge of $\Gamma$ that crosses that turn. 
\end{definition} 
 
If $\psi(v) = w$ where $v,w$ are vertices in $\Gamma$ then $\psi$ induces a simplicial map from $\op{Wh}(v)$ to $\op{Wh}(w)$. 

\begin{definition}[\cite{BFH:Laminations}]
A finitely generated subgroup $H$ of $\free$ \emph{carries} a lamination $\Lambda$ if there exists a marked metric graph $\Gamma_0$, an isometric immersion $i: \Gamma_H \to \Gamma_0$ with $\pi_1(i(\Gamma_H)) = H$ and an isometric immersion $l: \mathbb{R} \to \Gamma_H$ such that $i \circ l$ is a generic leaf of $\Lambda(\Gamma_0)$. 
\end{definition}

\begin{proposition}[{\cite[Lemma 2.1, Proposition 2.4]{BFH:Laminations}}]\label{P:Absolute Wh Graph}
\begin{enumerate}[(a)]
\item At every vertex of $\Gamma$ the Whitehead graph is connected. 
\item Suppose $\pi: \Gamma^{\prime} \to \Gamma$ is a finite sheeted covering space and $\psi^{\prime}: \Gamma^{\prime} \to \Gamma^{\prime}$ is a lift of $\psi$. Then the transition matrix of $\psi^{\prime}$ is primitive and the Whitehead graph of $\psi^{\prime}$ at a vertex $v$ of $\Gamma^{\prime}$ is the lift of the Whitehead graph of $\psi$ at $\pi(v)$ and in particular is connected. 
\item If a finitely generated subgroup $H$ of $\free$ carries $\Lambda_{\psi}^+$ then $H$ is a finite index subgroup of $\free$. 
\end{enumerate}\end{proposition}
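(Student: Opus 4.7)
The plan is to follow the classical strategy of Bestvina--Feighn--Handel \cite{BFH:Laminations}, whose main inputs are: full irreducibility of $\psi$; primitivity of the transition matrix $M$ of $\psi$ after replacing $\psi$ by a power (a standard consequence of full irreducibility for a train track representative); and the fact that a turn is $\Lambda^+_\psi$-legal precisely when it appears in some $\psi^n$-iterate of an edge. For part (a), I would argue by contradiction. Suppose $\op{Wh}(v)$ splits into components $C_1, \ldots, C_s$ with $s \geq 2$. Because $\psi$ carries legal turns to legal turns, a power $\psi^N$ fixes $v$ and permutes the $C_i$, and after further powering we may assume each $C_i$ is preserved. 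Now perform a Whitehead-style ungluing at $v$: separate $v$ into $s$ vertices $v_1, \ldots, v_s$, one per component of $\op{Wh}(v)$, and extend $\psi^N$ to this new graph. The resulting graph, after collapsing any contractible pieces, yields a nontrivial splitting whose factors are preserved by $\psi^N$, producing a proper $\psi^N$-invariant free factor system of $\free$ and contradicting full irreducibility.

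For part (b), the primitivity claim is a lifting argument via Perron--Frobenius: any nontrivial block-triangular decomposition or period-$k$ structure on $M^{\p}$ would descend via $\pi$ to a corresponding structure on $M$, contradicting primitivity of $M$. For the Whitehead graph identification, the covering $\pi \colon \Gamma^{\p} \to \Gamma$ is a local bijection on directions at each vertex, and a turn $(e_1^{\p}, e_2^{\p})$ at $v \in \Gamma^{\p}$ is $\Lambda^+_{\psi^{\p}}$-legal iff its image $(\pi e_1^{\p}, \pi e_2^{\p})$ is $\Lambda^+_\psi$-legal, since iterates of $\psi$ lift uniquely to iterates of $\psi^{\p}$. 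Hence the identification $\op{Wh}_{\psi^{\p}}(v) = \pi^{-1}(\op{Wh}_\psi(\pi(v)))$ holds as an abstract graph cover. Connectedness then follows by combining part (a) with primitivity of $M^{\p}$: iterating $\psi^{\p}$ on a single edge eventually produces a path whose image hits every edge in every fiber of $\pi$, threading the lifted copies of the connected base Whitehead graph together.

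For part (c), take $\Gamma_0 = \Gamma$ in the definition of carrying, and let $q \colon \widetilde{\Gamma}_H \to \Gamma$ be the based covering corresponding to $H$; then the immersion $i \colon \Gamma_H \to \Gamma$, whose image has fundamental group $H$, lifts to an embedding $\widetilde{i} \colon \Gamma_H \hookrightarrow \widetilde{\Gamma}_H$, and by hypothesis a lift $\widetilde{\gamma}$ of a generic leaf lies in $\widetilde{i}(\Gamma_H)$. Since $M$ is primitive, the generic leaf crosses every edge of $\Gamma$ infinitely often, so $\widetilde{i}(\Gamma_H)$ is a finite subgraph of $\widetilde{\Gamma}_H$ that projects onto all of $\Gamma$. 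Using birecurrence of the generic leaf together with the $\psi^N$-equivariance of the picture upstairs (after passing to a power of $\psi$ that lifts), one then shows that $\widetilde{i}(\Gamma_H)$ must equal all of $\widetilde{\Gamma}_H$, forcing $[\free:H]$ to be finite. The anticipated main obstacle is part (c): the passage from ``a generic leaf is trapped in a finite subgraph upstairs'' to ``the whole cover is finite'' is where primitivity and birecurrence must be combined carefully, essentially using that every finite subword of the generic leaf must appear inside the trapped region infinitely often and this rigidity propagates to fill the cover.
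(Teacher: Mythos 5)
First, note that the paper itself does not prove this proposition: it is quoted verbatim from \cite{BFH:Laminations} (Lemma 2.1 and Proposition 2.4 there), and the only place the argument is rehearsed is in the proof of its relative analogue, Lemma~\ref{L:WhGraph}. Measured against that template, your sketch of part (a) is essentially the intended argument (blow up the vertex along the components of the Whitehead graph, extend a power of $\psi$, and read off an invariant proper free factor system from the complement of the new edges), and is fine as a sketch.

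Parts (b) and (c) have genuine gaps. In (b), your primitivity argument --- that a block-triangular or periodic structure on the transition matrix $M^{\p}$ of $\psi^{\p}$ would ``descend'' to one on $M$ --- does not work: two edges of $\Gamma^{\p}$ in the same fiber of $\pi$ may lie in different blocks upstairs, and their common image is a single edge of $\Gamma$, so no invariant decomposition of $\Gamma$ is produced. Likewise, your claim that a turn at $v\in\Gamma^{\p}$ is $\Lambda^+_{\psi^{\p}}$-legal if and only if its $\pi$-image is $\Lambda^+_{\psi}$-legal is precisely the nontrivial content of the statement, and ``iterates of $\psi$ lift uniquely to iterates of $\psi^{\p}$'' is not a proof of the forward implication: if $\psi^k(e)$ crosses the turn $\bar a b$ at $\pi(v)$, a lift of $\psi^k(e)$ crosses \emph{some} preimage of that turn, not necessarily the one at $v$. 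The actual argument (visible in the proof of Lemma~\ref{L:WhGraph}(b)) passes to a power so that some edge $a$ satisfies $\psi(a)=a\cdot w\cdots$, i.e.\ there is a fixed direction; iterating that edge produces arbitrarily long leaf segments based at a fixed point, and one then transports by deck transformations to hit the prescribed lift of the turn. Primitivity of $M^{\p}$ is a consequence of this, not an input to it. In (c), you correctly identify the hard step but do not carry it out, and the route you gesture at (showing the core $\Gamma_H$ fills the infinite cover $\widetilde{\Gamma}_H$) is not how the contradiction arises: if $[\free:H]=\infty$ the core is a \emph{proper} finite subgraph of an infinite cover, and the contradiction is that the leaf must \emph{exit} the core, not that the core is everything. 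The standard completion is to enlarge $\Gamma_H$ to a finite-sheeted cover $\Gamma_H^{\p}$ of $\Gamma$, pass to a further finite cover to which $\psi$ lifts, apply part (b) to conclude the lifted Whitehead graphs are connected so leaves cross every edge upstairs, and then project to see leaves crossing the edges added to $\Gamma_H$ --- contradicting that $H$ carries $\Lambda^+_\psi$. Your outline never invokes part (b), which is the engine of part (c).
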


We now look at an example of the Whitehead graph of a fully irreducible automorphism relative to $\ffa$ to see why we need a notion of a relative Whitehead graph. 

\begin{example}\label{E:Disconnected Relative Wh Graph}
Let $F_4 = \la a, b, c, d \ra$, $\ffa = \{[\la a,b \ra]\}$ and $\oo$ a relative automorphism be given by 
$$\oo(a) = ab, 
\oo(b) = b,  
\oo(c) = cad, 
\oo(d) = dcad.$$
Let $\phi^{\p}_0: G^{\p} \to G^{\p}$ be a relative train track representative of $\oo$ where $G^{\p}$ is the rose on four petals labeled $a, b, c, d$ and vertex $v$. The Whitehead graph at $v$ is shown in Figure~\ref{F:WhGraphEx}: \begin{figure}[h]
\centering 
\includegraphics[scale=.6]{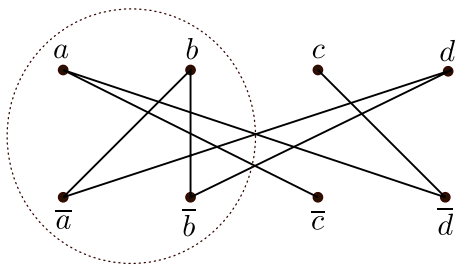}
\caption{Whitehead Graph for Example~\ref{E:Disconnected Relative Wh Graph}}
\label{F:WhGraphEx}
\end{figure}

The Whitehead graph at $v$ is disconnected with two gates $\{c, \overline{c}, a, \overline{d}\}$ and $\{\overline{a}, b, \overline{b}, d\}$. 
If we identify all the directions coming from the rose corresponding to $\la a, b \ra$ then we do get a connected graph. 
\end{example}

We will now define a relative Whitehead graph. Let $\phi_0': G' \to G'$ be the relative train track map and let $\phi_0: G \to G$ be the $\ffa$-train track representative of a relative fully irreducible automorphism $\oo$ from Definition~\ref{D: A TrainTrack}, with the attracting lamination $\lamination$. Recall from Section~\ref{subsec:Laminations}, that $\lamination$ is the closure of a birecurrent line $\beta$, which is obtained by looking at $\phi_0'$-iterates of an edge in the EG stratum of $G'$. 

\begin{definition}[Relative Whitehead graph]
 Let $v$ be a vertex of $G$ of valence greater than one. 
\begin{itemize}
\item If $v$ has trivial stabilizer then the relative Whitehead graph is defined as follows: the vertices are given by the outgoing edges incident at $v$ and two vertices are joined by an edge if the corresponding outgoing edges in $G$ form a \emph{$\lamination(G)$-legal turn}, that is, there is a $\phi_0$-iterate of an edge of $G$ that crosses that turn. 
\item If $v$ has a non-trivial stabilizer then do the following: consider the pre-images $v_1, \ldots, v_m$ of $v$ in $G'$ and build a graph as follows: 
the vertices are given by the outgoing edges incident at each $v_i$ in $G'$. Two vertices are joined by an edge if the corresponding outgoing edges in $G'$ form a $\lamination(G')$-turn, that is, there is a $\phi_0'$-iterate of an edge of $G'$ that crosses that turn.  

Now identify all the directions coming from the stratum $G_{r-1} \subset G'$, where $\mathcal{F}(G_{r-1}) = \ffa$. The graph thus obtained is called the relative Whitehead graph. The vertices of the relative Whitehead graph can also be thought of as the outgoing edges incident at $v$ and a vertex, denoted $v_A$, representing the non-trivial vertex stabilizer $A$. 
\end{itemize}  
\end{definition}

In Example~\ref{E:Disconnected Relative Wh Graph}, after collapsing the maximal invariant subgraph of $G^{\p}$ we get a graph $G$ which is a rose with two petals and vertex stabilizer $A = \la a, b \ra$. The relative Whitehead graph at the vertex of $G$ has vertices corresponding to $c, \overline{c}, d, \overline{d}, v_A$ and is shown in Figure~\ref{F:RelativeWhiteheadGraph}.
\begin{figure}[h]
\centering
\includegraphics[scale = .6]{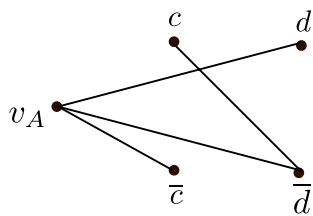}
\caption{Relative Whitehead graph for Example~\ref{E:Disconnected Relative Wh Graph}.}
\label{F:RelativeWhiteheadGraph}
\end{figure}

Before we state the next lemma, let's look at two examples of covering spaces for the relative rose, one by a finite index subgroup and another by an infinite index subgroup. Let $F_6 = \la a,b,c,d,e,f \ra$ and $\ffa  = \{ [\la a,b \ra], [\la c,d\ra]\}$.  
\begin{itemize}
\item Let $H = \la a, b, ef \ra$ be a subgroup of $\free$. The (infinite sheeted) cover of the relative rose corresponding to $H$ is shown in Figure~\ref{F:InfniteCover}: 
\begin{figure}[h]
\centering
\includegraphics[scale = .5]{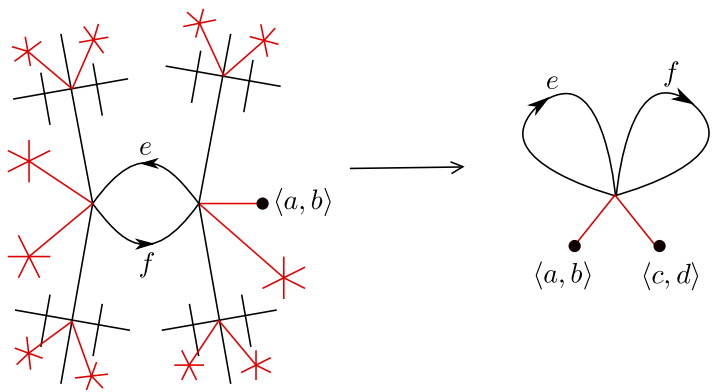}
\caption{Infinite sheeted cover}
\label{F:InfniteCover}
\end{figure}
\item A finite sheeted cover whose fundamental group contains $H=\la a,b,ef \ra$ is shown in Figure~\ref{F:FiniteCover}: 
\begin{figure}[h]
\centering
\includegraphics[scale = .5]{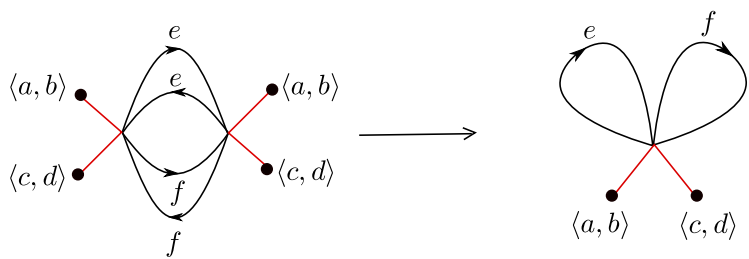}
\caption{Finite sheeted cover}
\label{F:FiniteCover}
\end{figure}
\end{itemize}

\begin{lemma}\label{L:WhGraph} Let $\phi_0: G \to G$ be an $\ffa$-train track representative of an automophism that is fully irreducible relative to $\ffa$. 
\begin{enumerate}[(a)]
\item The relative Whitehead graph of $\phi_0$ is connected at each vertex of $G$. 
\item Suppose $p: G^{\p\p} \to G$ is a finite sheeted covering space such that for every vertex $v$ of $G^{\p\p}$, $p_{\ast}(\op{Stab}(v)) = \op{Stab}(p(v))$, and $\phi^{\p\p}: G^{\p\p} \to G^{\p\p}$ is a lift of $\phi_0: G \to G$. Then the relative Whitehead graph of $\phi^{\p\p}$ at a vertex $v$ of $G^{\p \p}$ is the lift of the relative Whitehead graph of $\phi_0$ at $p(v)$ and in particular is connected. 
\item Let $H$ be a finitely generated subgroup of $\free$ such that for every $[A] \in \ffa$ either $H \cap A$ is trivial or equal to $A$, up to conjugation. If $H$ carries $\lamination$ then $H$ has finite index in $\free$. 
\end{enumerate}\end{lemma}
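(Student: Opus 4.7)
The overall strategy is to adapt the classical absolute arguments of Proposition~\ref{P:Absolute Wh Graph} to the relative setting; the three parts feed into one another, with (a) relying on relative full irreducibility via a Whitehead/fold argument, (b) being a direct lifting computation that uses (a), and (c) combining (a) with a core-graph construction using birecurrence.

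For (a), I would argue by contradiction. Suppose the relative Whitehead graph at some vertex $v$ of $G$ is disconnected with components $C_1, \dots, C_m$. Because $\phi_0$ carries initial directions to initial directions and $\lamination(G)$-legal turns to $\lamination(G)$-legal turns, after replacing $\oo$ by a suitable power $\phi_0$ fixes $v$ and permutes the components $C_i$. A Whitehead-style blow-up then splits $v$ into one new vertex per component (keeping the non-trivial stabilizer, if any, attached to the new vertex labeled $v_A$) joined by a small auxiliary tree of new edges. Collapsing the auxiliary tree recovers $G$, but keeping it yields a graph $G^{\sharp}$ on which $\phi_0$ extends to a map $\phi^{\sharp}$ whose maximal invariant proper subgraph realizes a non-trivial free factor system strictly between $\ffa$ and $\{[\free]\}$, contradicting the relative full irreducibility of $\oo$.

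For (b), the hypothesis $p_{\ast}(\op{Stab}(v)) = \op{Stab}(p(v))$ guarantees $G^{\p\p}$ inherits the structure of an $\ffa^{\p\p}$-train track graph for the appropriate free factor system $\ffa^{\p\p}$ of $p_\ast \pi_1(G^{\p\p})$. A turn at $v \in G^{\p\p}$ projects bijectively to a turn at $p(v) \in G$, and since $\phi^{\p\p}$ is a lift of $\phi_0$, the $\phi^{\p\p}$-iterates of edges at $v$ project to the $\phi_0$-iterates of edges at $p(v)$. Hence legal turns at $v$ correspond exactly to legal turns at $p(v)$, identifying the two relative Whitehead graphs as abstract combinatorial graphs, and (a) supplies connectedness.

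For (c), I would construct a graph of groups $G_H$ with $\pi_1(G_H)=H$ and a natural immersion $\iota \colon G_H \to G$ as a core of the quotient $H \backslash \RelTTTree$; the hypothesis on $H \cap A$ ensures vertex stabilizers of $G_H$ are either trivial or conjugate to some $A_j$, so $\iota$ really is an immersion of graphs of groups. Since $H$ carries $\lamination$ and a generic leaf $\beta$ of $\lamination$ is birecurrent, $\beta$ crosses every edge and every $\lamination(G)$-legal turn of $G$ infinitely often, and its lift realizes the lamination in $G_H$. For any vertex $u$ of $G$, let $D(v)$ denote the set of directions at $u$ realized by the preimage $v \in \iota^{-1}(u)$; the sets $D(v)$ are pairwise disjoint by the immersion property and their union is the full set of directions at $u$ by surjectivity on edges (coming from birecurrence). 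Since every edge of $\op{Wh}(u)$ is witnessed by a legal turn in $\beta$ based at a single preimage vertex of $u$, each $D(v)$ is a union of connected components of $\op{Wh}(u)$; by (a) there is only one component, so there is only one preimage of $u$ and $\iota$ restricts to a bijection on stars. Hence $\iota$ is a finite cover of graphs of groups, and since $H$ is finitely generated $G_H$ is compact, proving $[\free : H] < \infty$. The main obstacle will be part (c), where the graph-of-groups core for $H$ must be set up carefully so that the birecurrence-plus-Whitehead argument genuinely yields a cover in the graph-of-groups sense and not merely a combinatorial immersion of underlying graphs.
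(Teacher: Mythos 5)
Your part (a) follows the paper's route (the Bestvina--Handel blow-up at the disconnected vertex, producing an invariant proper free factor system strictly between $\ffa$ and $\{[\free]\}$) and is fine modulo routine details. The problems are in (b) and (c). In (b), the assertion that ``legal turns at $v$ correspond exactly to legal turns at $p(v)$'' is only immediate in one direction: a turn crossed by a $\phi^{\p\p}$-iterate of an edge certainly projects to a legal turn downstairs. The converse is the entire content of the claim: given a turn at $v\in G^{\p\p}$ whose image at $p(v)$ is crossed by some $\phi_0^k(e)$, you must exhibit an edge of $G^{\p\p}$ and an iterate of $\phi^{\p\p}$ crossing \emph{that particular lift} of the turn at \emph{that particular preimage} $v$; since $\phi^{\p\p}$ permutes the finitely many preimages of each cell, this requires the fixed-point/fixed-direction argument of \cite[Lemma 2.1]{BFH:Laminations}, and at a vertex with non-trivial stabilizer an extra step (finding, after passing to a power, an edge $a^{\p}$ with $\phi_0^{\p}(a^{\p})=\ldots a^{\p}w\ldots$ so that the turns into the peripheral vertex $v_A$ are picked up upstairs). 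Your proposal omits this, which is the heart of (b).

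Part (c) contains an actual error. For an immersion $\iota\colon G_H\to G$ the differential is injective on the star of each single vertex, but the images $D(v)$ and $D(v^{\p})$ of the stars of two distinct preimages of $u$ need \emph{not} be disjoint; moreover, an edge of $\op{Wh}(u)$ witnessed by the leaf at one occurrence of $u$ may be realized at a different preimage than an adjacent edge, so $D(v)$ need not be a union of components of the Whitehead graph. A sanity check shows the conclusion cannot be right: if your argument worked it would give a unique preimage of every vertex, i.e.\ $H=\free$; but any proper finite-index subgroup carries $\lamination$ (leaves lift to finite covers, where by (b) they cross every edge), and its core has several preimages of each vertex. The disjointness step is exactly where this breaks. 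The paper's argument for (c) is different and does go through: if $[\free:H]=\infty$, complete the core $\Gamma_H$ to a finite-sheeted cover, pass to a further finite cover to which $\phi_0$ lifts, and apply (b) there to see that leaves must cross the edges added to $\Gamma_H$, so $H$ cannot carry $\lamination$.
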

\begin{proof}
\begin{enumerate}[(a)]
\item The same proof as in the absolute case works by doing a blow-up construction (\cite[Proposition 4.5]{BH:TrainTracks}) at a vertex. 

\item The graph $G^{\p\p}$ gets a legal turn structure from the lift of $G$ and it gets a legal turn structure from the map $\phi^{\p \p}$. We have to show that a turn in $G^{\p\p}$ whose image in $G$ is $\lamination$-legal is in fact crossed by a lift of a leaf of $\lamination$ to $G^{\p\p}$.  

\begin{enumerate}[(i)]
\item Let $a^{\p \p}, b^{\p \p}$ be two edges incident at a vertex $v^{\p \p}$ of $G^{\p \p}$ where $p(a^{\p \p}) = a$ and $p(b^{\p \p}) = b$ are such that $ab$ is a legal turn at $p(v^{\p \p}) = v$ in $G$. The same proof as \cite[Lemma 2.1]{BFH:Laminations} works in this case. 

\item Let $v$ be a vertex of $G^{\p \p}$ with non-trivial vertex stabilizer. Let $a^{\p \p}$ be an edge at $v^{\p\p}$ such that $a = p(a^{\p \p})$. Let $a'$ be the pre-image of $a$ in $G'$ such that there exists an edge $e' \in G'$ such that $\phi_0'(e') = \ldots a' w \ldots$ where $w$ is a path in $G_{r-1} \subset G'$. After passing to a power, we have $\phi_0'(a') = \ldots a' w \ldots$. Thus $a'$ and hence $a$ has a fixed point. Now by the same argument as in the previous case we get that $\phi^{\p \p}(a^{\p \p})$ maps over $a^{\p \p} w^{\p \p}$.

\end{enumerate} 

\item Let $\Gamma_H$ be the core of the covering space of $G$ corresponding to a subgroup $H$ as in the statement of the lemma. Here $\Gamma_H$ is a finite graph. Let $i: \Gamma_H \to G$ be the isometric immersion. If $H$ has infinite index in $\free$ then we can add more vertices and edges to $\Gamma_H$ to complete it to a finite sheeted covering $\Gamma_H^{\prime}$ of $G$. We can pass to a further finite sheeted cover $\Gamma_H^{\prime \prime}$ such that $\phi_0: G \to G$ lifts to a map $\phi^{\p\p}: \Gamma_H^{\prime \prime} \to \Gamma_H^{\prime \prime}$. 
By the previous part we have that the relative Whitehead graph is connected at every vertex of $\Gamma_H^{\prime \prime}$. Therefore lifts of the leaves of $\lamination(G)$ cross every edge of $\Gamma_H^{\prime \prime}$. Under the projection $p: \Gamma_H^{\prime \prime} \to \Gamma_H^{\prime}$ we see that the edges we added to $\Gamma_H$ are crossed by leaves of $\lamination$ so $H$ does not carry $\lamination$. 
\qedhere \end{enumerate}
\end{proof}
\subsection{Transverse covering}\label{subsec:Transverse covering}

Let $\phi_0: G \to G$ be an $\ffa$-train track representative of a relative fully irreducible automorphism $\oo$. Let $\phi: \RelTTTree \to \RelTTTree$ be a lift to the universal cover $T_G$ of $G$. In this section we define a transverse covering for $\RelTTTree$ which will be used in the proof of Lemma~\ref{L:ConnectingEdges}.  

We define an equivalence relation on $\lamination(\RelTTTree)$ as follows: two leaves $\gamma, \gamma^{\prime}$ are equivalent if there is a sequence of leaves $\gamma=\gamma_1, \gamma_2, \ldots, \gamma_n=\gamma^{\prime}$ such that $\gamma_i$ and $\gamma_{i+1}$ overlap in a non-trivial edge path in $\RelTTTree$. Let $\mathcal{Y}(\lamination) = \{Y_i\}_{i \in I}$ be the set of subtrees of $T_G$ such that $Y_i$ is the realization of leaves of $\lamination(\RelTTTree)$ in an equivalence class. 


\begin{definition}[Closed subtree {\cite[Definition 2.4]{G:LimitGroups}}]  A subtree $Y$ of a tree $T$ is called closed if the intersection of $Y$ with any segment of $T$ is either empty or a segment of $T$. \end{definition}

\begin{definition}[Transverse Covering {\cite[Definition 4.6]{G:LimitGroups}}]  A transverse covering of an $\mathbb{R}$-tree $T$ is a family $\mathcal{Y}$ of non-degenerate closed subtrees of $T$ such that every arc in $T$ is covered by finitely many subtrees in $\mathcal{Y}$ and any two distinct subtrees in $\mathcal{Y}$ intersect in at most one point. \end{definition}

\begin{lemma} The set $\mathcal{Y}(\lamination)$ forms a transverse covering of $\RelTTTree$. \end{lemma}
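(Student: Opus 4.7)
The plan is to verify the three defining properties of a transverse covering for $\mathcal{Y}(\lamination)$: that each $Y_i$ is a non-degenerate closed subtree of $\RelTTTree$, that every arc of $\RelTTTree$ is met by only finitely many $Y_i$, and that $Y_i \cap Y_j$ consists of at most one point whenever $i \neq j$.

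For the subtree property, I would take $p, q \in Y_i$ lying on leaves $\gamma$ and $\gamma^{\p}$. By definition of the equivalence relation defining $Y_i$, there is a chain $\gamma = \gamma_1, \gamma_2, \ldots, \gamma_n = \gamma^{\p}$ of leaves in which each consecutive pair overlaps on a non-trivial edge path. Concatenating appropriate subarcs of the $\gamma_k$ produces a connected subset $C \subseteq Y_i$ containing both $p$ and $q$. Since $\RelTTTree$ is an $\mathbb{R}$-tree, the unique geodesic $[p, q]$ is contained in every connected subset containing its endpoints, so $[p, q] \subseteq Y_i$, and $Y_i$ is a subtree. It is non-degenerate because it contains a bi-infinite leaf, and it is closed in the simplicial tree $\RelTTTree$ because each leaf traverses only entire edges, so $Y_i$ is a union of closed edges and vertices.

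For the covering property, I would argue that every edge of $\RelTTTree$ is crossed by some leaf of $\lamination(\RelTTTree)$. By Lemma~\ref{L:WhGraph}(a) the relative Whitehead graph is connected at every vertex of $G$, which in particular forces every outgoing edge at every vertex to participate in some $\lamination(G)$-legal turn, hence to be traversed by some leaf. Consequently every edge of $\RelTTTree$ lies in some $Y_i$, and since any arc in $\RelTTTree$ has finite simplicial length, only finitely many $Y_i$ can meet it.

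For the transversality condition, note that $Y_i \cap Y_j$ is the intersection of two subtrees of an $\mathbb{R}$-tree, hence itself a subtree. If it contained more than one point it would contain an edge $e$ of $\RelTTTree$; but then some leaf in $Y_i$ and some leaf in $Y_j$ would both traverse $e$, so they would overlap in a non-trivial edge path and be equivalent, forcing $Y_i = Y_j$, contrary to $i \neq j$. The only step requiring real attention is (i), and only to confirm that the concatenation of subarcs along the chain of overlapping leaves really is connected; once this is in hand, the remaining assertions follow from standard properties of $\mathbb{R}$-trees and simplicial trees.
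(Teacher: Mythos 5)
Your proof is correct and follows essentially the same route as the paper's (much terser) argument: every edge is crossed by a leaf so the $Y_i$ cover, an edge lying in two of the $Y_i$ forces the corresponding equivalence classes to coincide, and this gives both the finiteness of the covering of an arc and the at-most-one-point intersection. You additionally verify that each $Y_i$ is a closed subtree, a point the paper leaves implicit; that extra check is sound since connected subsets of an $\mathbb{R}$-tree are convex and each $Y_i$ is a union of whole edges.
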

\begin{proof} Since an element $Y$ of $\mathcal{Y}(\lamination)$ contains the realization in $\RelTTTree$ of a leaf of $\lamination$, $\mathcal{Y}(\lamination)$ is a covering of $\RelTTTree$. We now need to check that every arc of $\RelTTTree$ is covered by finitely many $Y_i$. Indeed, if an edge of $\RelTTTree$ is covered by multiple $Y_i$ then by the definition of the equivalence relation they are connected. Therefore an edge of $\RelTTTree$ is covered by one subtree $Y_i$ and a finite arc is covered by finitely many subtrees in $\mathcal{Y}(\lamination)$. Also by definition two distinct subtrees $Y_i, Y_j$ intersect in at most one point. \end{proof}


\begin{example}
Recall the automorphism $\oo$ from Example~\ref{E:Disconnected Relative Wh Graph} given by 
$\oo(a) = ab, 
\oo(b) = b,  
\oo(c) = cad, 
\oo(d) = dcad$.
We say two leaves in $\lamination(\AbsTree)$ are equivalent if they overlap in an edge in the top EG stratum. There are two different equivalence classes of leaves at a vertex in the universal cover $\AbsTree$. 
See Figure~\ref{F:ExEqClasses}.
\begin{figure}[h]
\centering
\includegraphics[scale=.7]{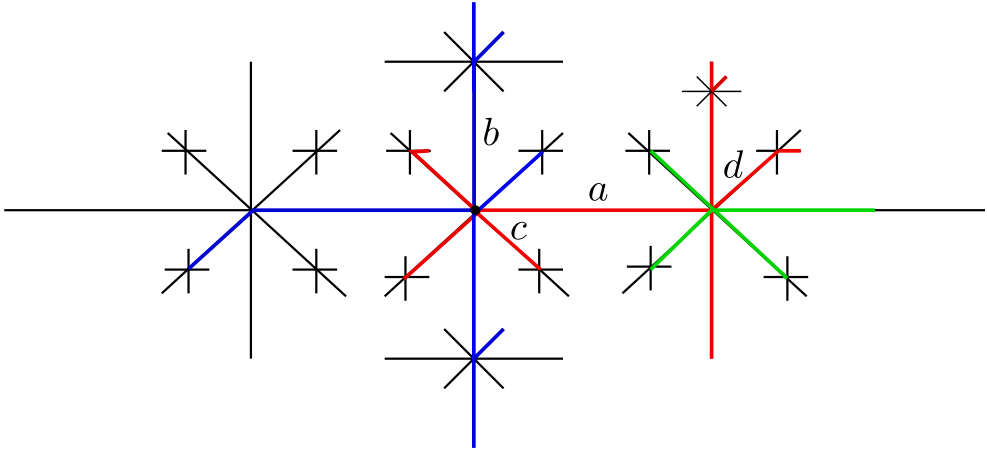}
\caption{Three different equivalence classes in $\AbsTree$}
\label{F:ExEqClasses}
\end{figure}

By collapsing the edges with labels $a$ and $b$ in $G^{\p}$ we get a relative rose $G$ with two petals and a non-trivial vertex stabilizer. The covering of $\AbsTree$ in Figure~\ref{F:ExEqClasses} descends to a transverse covering of $\RelTTTree$. See Figure~\ref{F:ExEqClasses1}. \begin{figure}[h]
\centering
\includegraphics[scale=.7]{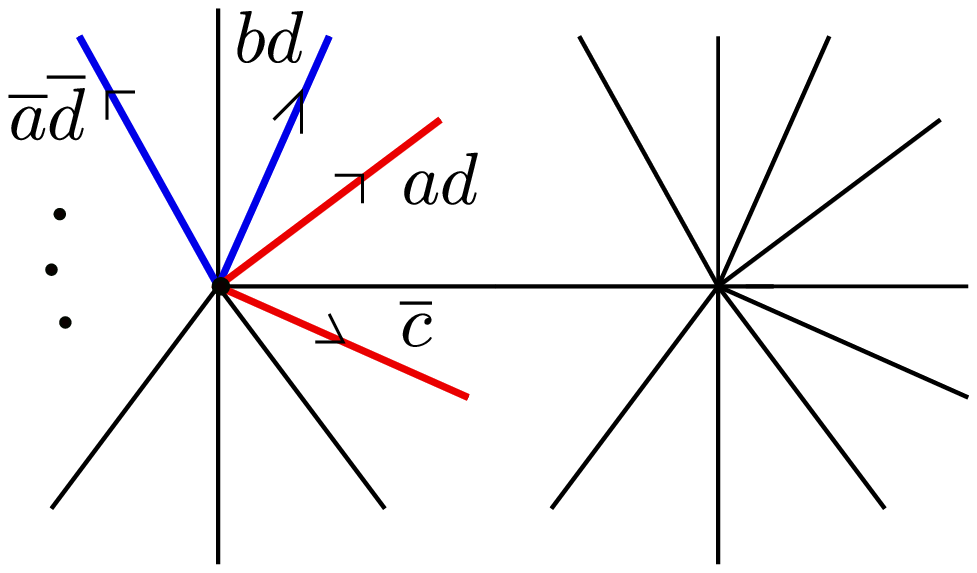}
\caption{Different equivalence classes in $\RelTTTree$}
\label{F:ExEqClasses1}
\end{figure}

\end{example}

\subsection{$\mathcal{Q}$ map}\label{subsec:Q map}
Given a tree $T$ with dense orbits in $\overline{CV}_n$,  in \cite{LL:NSDynamics}, Levitt and Lustig define a map called the $\mathcal{Q}$ map from the boundary of $\free$ to $\overline{T} \cup \partial T$, where $\overline{T}$ is the metric completion of $T$. This map is the key tool used to prove north-south dynamics for a fully irreducible automorphism on the closure of outer space. We will follow the same techniques to get a relative result. The main proposition in this section is Proposition~\ref{P:5.1}. 

Let $T_0$ be a metric simplicial $\free$-tree. Let $v(T_0)$ denote the volume of the quotient graph $T_0/\free$. Let $T$ be a metric minimal very small $\free$-tree and let $\overline{T}$ be the metric completion of $T$. 
Let $T$ be an $(\free,\ffa)$-tree. The boundary of $T$, denoted $\partial T$, is defined as the set of infinite rays $\rho : [0,\infty) \to T$ up to an equivalence. Namely, two rays are equivalent if they intersect along a ray. If $T_0$ is a Grushko $(\free, \ffa)$-tree then there is a canonical identification between $\partial \free \setminus \partial \ffa$ (see Section~\ref{subsec:Boundary} for definition) and $\partial T_0$. We denote by $\rho$ a ray in $T_0$ representing the point $X$ in $\partial T_0$. Given an equivariant map $h : T_0 \to T$, let $r=h(\rho)$. We say \emph{$X$ is $T$-bounded} if $r$ is bounded in $T$ (this does not depend on the choice of $h$ as shown in \cite[Proposition 3.1]{LL:NSDynamics}). If $r$ is unbounded then we get a ray representing a point in $\partial T$. 

Let $h: T_0 \to T$ be a continuous map between $\mathbb{R}$-trees. We say $h$ has \emph{bounded cancellation property} if there exists a constant $C\geq 0$ such that the $h$-image of any segment $pq$ in $T_0$ is contained in the $C$ neighborhood of the geodesic joining $h(p)$ and $h(q)$ in $T$. The smallest such $C$ is called the bounded cancellation constant for $h$, denoted $\op{BCC}(h)$. The following fact about BCC for very small trees is a generalization of Cooper's bounded cancellation lemma \cite{C:BCC}, and can be found in \cite[Lemma 3.1]{BFH:Laminations} and \cite{GJLL:Index}. 
\begin{lemma}
Let $T$ be an $\mathbb{R}$-tree with a minimal very small action of $\free$. Let $T_0$ be a free simplicial $\free$-tree, and $h:T_0 \to T$ an equivariant map. Then $h$ has bounded cancellation, with $\op{BCC}(h) \leq \op{Lip}(h) v(T_0)$, where $\op{Lip}(h)$ is the Lipschitz constant for $h$.  
\end{lemma}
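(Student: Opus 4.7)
The plan is to argue by contradiction. Write $L := \op{Lip}(h)$ and $v := v(T_0)$, and assume $\op{BCC}(h) > Lv$. By definition of the bounded cancellation constant this lets me select a segment $[p,q] \subset T_0$ together with a point $m \in [p,q]$ such that $h(m)$ lies at distance $C$ strictly greater than $Lv$ from the geodesic $[h(p), h(q)]$ in $T$.

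The next step exploits the tree structure of $T$. Let $y$ be the nearest point to $h(m)$ on $[h(p), h(q)]$. Then $y$ lies on the unique geodesic in $T$ between $h(p)$ and $h(m)$, and since any continuous path in an $\mathbb{R}$-tree between two points contains the unique geodesic between them, the path $h([p,m])$ must pass through $y$. This produces a point $p' \in [p,m]$ with $h(p') = y$, and symmetrically a point $q' \in [m,q]$ with $h(q') = y$. The Lipschitz estimate then gives
\[ d_{T_0}(p', q') \;\geq\; d_{T_0}(p', m) + d_{T_0}(m, q') \;\geq\; \frac{2\, d_T(h(m), y)}{L} \;>\; \frac{2C}{L} \;>\; 2v. \]

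The core of the argument is to convert the existence of a segment $[p', q'] \subset T_0$ of length $>2v$ with $h(p') = h(q') = y$ into a violation of the very-small hypothesis. The plan is a pigeonhole argument on the quotient graph $T_0/\free$, which has total edge length $v$: the projection of $[p', q']$ to $T_0/\free$ cannot be injective, and in fact must revisit some edges multiple times, so one can extract non-trivial deck transformations $g_1, g_2 \in \free$ together with pairs $a_i, g_i a_i \in [p', q']$ lying in independent sub-intervals. Equivariance $h(g_i a_i) = g_i \cdot h(a_i)$, combined with the fact that the entire image $h([p', q'])$ is contained in a subtree of $T$ folded at $y$, forces each $g_i$ to fix $y$; by choosing the pairs close to $p'$ and $q'$ one can moreover arrange that each $g_i$ fixes a non-degenerate arc through $y$, and that the two arcs are configured so that either $\la g_1, g_2 \ra$ is a non-cyclic group fixing an arc, or some element of $\la g_1, g_2 \ra$ fixes a tripod — either outcome contradicts the very-small hypothesis on $T$.

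The main obstacle I expect is this last extraction step: the pigeonhole produces elliptic elements with overlapping fixed sets, but verifying that the configuration genuinely forces a non-cyclic arc stabilizer or a non-trivial tripod stabilizer requires careful combinatorial bookkeeping on $[p', q']$. For this I would follow the argument of \cite[Lemma 3.1]{BFH:Laminations} and the systematic treatment in \cite{GJLL:Index}, both of which adapt Cooper's original bounded cancellation lemma \cite{C:BCC} from the free simplicial setting to the very-small setting. Once the contradiction is derived, the assumption $\op{BCC}(h) > Lv$ is refuted, yielding the stated inequality $\op{BCC}(h) \leq \op{Lip}(h)\, v(T_0)$.
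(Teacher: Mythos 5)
First, a point of comparison: the paper does not actually prove this lemma --- it is quoted from \cite[Lemma 3.1]{BFH:Laminations} and \cite{GJLL:Index} as a generalization of Cooper's bounded cancellation lemma \cite{C:BCC} --- so there is no in-paper argument to measure yours against, and your attempt must stand on its own. Your opening reduction is correct: if $\op{BCC}(h)>Lv$ one finds $[p,q]$, $m$, the median $y$ of $h(p),h(q),h(m)$, points $p'\in[p,m]$ and $q'\in[m,q]$ with $h(p')=h(q')=y$ (the continuous image of a connected set in an $\mathbb{R}$-tree is a subtree, hence contains the geodesic $[h(p),h(m)]\ni y$), and $d_{T_0}(p',q')\ge 2C/L>2v$.

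The core step, however, is where the lemma actually lives, and as written it is a hope rather than an argument --- and the specific claim it rests on is false. Pigeonhole on $T_0/\free$ applied to $[p',q']$ only produces $g\ne 1$ and $a\in[p',q']$ with $ga\in[p',q']$; equivariance then gives $d_T(g\cdot h(a),h(a))\le \op{diam}\,h([p',q'])$, i.e.\ $g$ moves \emph{some} point of $T$ a bounded distance. Nothing forces $g$ to fix $y$, or even to be elliptic in $T$: if $[p',q']$ runs many times over a single loop of $T_0/\free$, the resulting $g$ is a power of that loop, which may well be hyperbolic in $T$ with small translation length. The phrase ``$h([p',q'])$ is contained in a subtree of $T$ folded at $y$'' imposes no constraint of the kind you need, so the contradiction with the very-small hypothesis is never reached. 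Moreover, the sources you defer to do not argue this way. The standard proof has two steps: (i) when $T$ is free and simplicial, this is Cooper's lemma, proved by factoring $h$ through Stallings folds --- a fold of length $\ell$ creates at most $\ell$ of cancellation and decreases the volume of the image graph by $\ell$, so the total cancellation is at most $\op{Lip}(h)\,v(T_0)-v(T/\free)\le \op{Lip}(h)\,v(T_0)$; (ii) for a general very small $T$, one approximates $T$ by free simplicial trees in $cv_n$ with comparable Lipschitz constants and passes to the limit using convergence of length functions. The very-small hypothesis is used exactly once, in step (ii), via the identification of $\overline{cv}_n$ with the space of very small trees (\cite{BF:OuterLimits}, \cite{CL:BoundaryOuterSpace}); it does not enter through a direct analysis of arc or tripod stabilizers. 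To make your write-up into a proof you would need to replace the pigeonhole paragraph with that reduction.
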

\begin{proposition}[Small BCC]\label{P:SmallBBT}
Let $T \in \relCVClo$ be a minimal $\free$-tree with dense orbits and trivial arc stabilizers. Given $\epsilon >0$, there exists an $(\free, \ffa)$-tree $T_0 \in \relCV$, $v(T_0) < \epsilon$, and an equivariant map $h : T_0 \to T$ whose restriction to each edge is isometric and $BCC(h)< \epsilon$.  \end{proposition}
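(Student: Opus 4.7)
My plan is to adapt the Levitt--Lustig construction from the absolute case~\cite{LL:NSDynamics} to the relative setting. The new ingredient is that we replace each peripheral subgroup $A_i$ in a chosen Grushko decomposition of $\free$ by a suitable conjugate, so that the fixed points in $T$ of these conjugates all cluster near a single basepoint; density of orbits provides both this clustering and a short free basis for a cofactor.

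Concretely, I first pick a basepoint $y \in T$ with trivial stabilizer; such a $y$ exists because non-trivial point stabilizers in $T$ lie in the countable collection of $\free$-conjugates of the peripheral subgroups $A_1, \ldots, A_k$ (using trivial arc stabilizers and very smallness). For each $i$, density of $\free \cdot y$ in $T$ lets me choose $a_i \in \free$ with $d_T(a_i y, x_i) < \epsilon / (2(k+N))$, where $x_i$ is the unique $A_i$-fixed point in $T$. Setting $B_i := a_i^{-1} A_i a_i$, we have $[B_i] = [A_i]$ (so the free factor system is unchanged) and the $B_i$-fixed point $a_i^{-1} x_i$ is close to $y$. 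A further density argument then produces $g_1, \ldots, g_N \in \free$ with $d_T(y, g_j y) < \epsilon / (2(k+N))$ such that $\{B_1, \ldots, B_k, g_1, \ldots, g_N\}$ realizes a free product decomposition $\free = B_1 \ast \cdots \ast B_k \ast \la g_1, \ldots, g_N \ra$.

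I then let $T_0$ be the universal cover of the relative rose associated to this decomposition: a central vertex $\bar{c}$ of trivial stabilizer, $k$ peripheral vertices $\bar{v}_i$ with stabilizers $B_i$, one edge $\bar{e}_i$ from $\bar{c}$ to $\bar{v}_i$ of length $d_T(a_i y, x_i)$, and $N$ loops at $\bar{c}$ of lengths $d_T(y, g_j y)$. The equivariant map $h : T_0 \to T$ is defined by sending a base lift $c$ of $\bar{c}$ to $y$, the base lift $v_i$ of $\bar{v}_i$ (fixed by $B_i$) to $a_i^{-1} x_i$, and extending isometrically on each edge. By construction
\[
 v(T_0) \;=\; \sum_{i=1}^{k} d_T(a_i y, x_i) + \sum_{j=1}^{N} d_T(y, g_j y) \;<\; \epsilon,
\]
and since $h$ is isometric on each edge, it is $1$-Lipschitz, so the bounded cancellation lemma stated immediately before the proposition gives $\op{BCC}(h) \leq \op{Lip}(h) \cdot v(T_0) < \epsilon$.

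The main obstacle I anticipate is the construction of the short cofactor generators $g_j$ in a way that remains compatible with the $B_i$'s as a free product of $\free$. Density of orbits produces abundantly many elements of $\free$ that move $y$ by small amounts, but selecting a finite subset of these so that together with $B_1, \ldots, B_k$ they freely generate $\free$ requires a Nielsen-type modification: starting from any initial free basis of a cofactor, one iteratively shortens each basis element by multiplying by suitable correction words (kept inside the appropriate factor to preserve the free product structure), using density at each step to decrease the displacement of $y$. This is the relative analogue of the basis modification in the absolute case and is the technically most delicate point.
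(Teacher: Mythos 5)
Your construction hinges on the claim that one can choose conjugators $a_i$ and cofactor generators $g_j$, all moving the basepoint $y$ by less than $\epsilon/(2(k+N))$, so that $\free = a_1^{-1}A_1a_1 \ast \cdots \ast a_k^{-1}A_ka_k \ast \la g_1,\ldots,g_N\ra$. That claim \emph{is} the proposition (a relative rose built on such a generating configuration is exactly a $T_0$ of volume $<\epsilon$), and the argument you sketch for it does not go through. A Nielsen move that is guaranteed to preserve the free product structure replaces $g_1$ by $g_1w$ (or $wg_1$) with $w$ lying in the subgroup generated by the \emph{remaining} factors and generators; that subgroup is a proper free factor of infinite index, and its orbit of $y$ need not be dense in $T$ (its minimal subtree is in general a proper subtree), so density of the full $\free$-orbit cannot be invoked at each shortening step. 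There is a second, independent problem: choosing the $a_i$ purely by the metric condition $d_T(a_iy,x_i)$ small does not guarantee that the conjugates $B_i=a_i^{-1}A_ia_i$ fit into a common free product decomposition at all. For instance, in $F_2=\la a\ra\ast\la b\ra$ the subgroups $\la a\ra$ and $(ba)\la b\ra(ba)^{-1}$ generate a rank-two subgroup of infinite index, so they are representatives of the two factors that admit no common complement.

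The paper avoids exhibiting a short basis altogether: following \cite[Proposition 2.2]{LL:NSDynamics}, one starts with an arbitrary $T_0\in\relCV$ and an equivariant $h:T_0\to T$ isometric on edges, and uses density of orbits to perform an equivariant fold that replaces $(T_0,h)$ by $(T_0',h')$ with $v(T_0')\leq v(T_0)-\tfrac{1}{6}|e|$ for some edge $e$; iterating drives the volume below $\epsilon$, and the $\op{BCC}$ bound then follows from $\op{BCC}(h)\leq\op{Lip}(h)\,v(T_0)$ as in your last display. If you want to keep your direct approach, you would need to prove the short-generating-configuration statement, and the only proof I know of it is essentially this folding argument run in reverse.
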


The proof of the above proposition when $T \in \overline{CV}_n$ and $T_0 \in CV_n$ in \cite[Proposition 2.2]{LL:NSDynamics} starts with an equivariant map $h: T_0 \to T$ which is isometric on edges. Then given an edge $e$ of $T_0$, one replaces $h$ by $h^{\prime}: T_0^{\prime} \to T$ with $v(T_0^{\prime}) \leq v(T_0) - 1/6|e|$. If $T \in \relCVClo$ then we can start with an equivariant map $h: T_0 \to T$ isometric on edges where $T_0 \in \relCV$ and do the same argument.    

\begin{proposition}[$\mathcal{Q}$ map]\label{P:Q map}
Let $T \in \relCVClo$ be a minimal $(\free, \ffa)$-tree with dense orbits and trivial arc stabilizers. Suppose $X \in \partial \free \setminus \partial \ffa$ is $T$-bounded. Then there is a unique point $\mathcal{Q}(X) \in \overline{T}$ such that for any equivariant map $h : T_0 \to T$ and any ray $\rho$ representing $X$ in $T_0 \in \relCV$, the point $\mathcal{Q}(X)$ belongs to the closure of $h(\rho)$ in $\overline{T}$. Also, every $h(\rho)$ is contained in a 2BCC($h$)-ball centered at $\mathcal{Q}(X)$, except for an initial part. \end{proposition}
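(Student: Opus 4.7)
The plan is to follow the Levitt--Lustig argument from the absolute case \cite[Proposition 3.1]{LL:NSDynamics}, with $T_0 \in \relCV$ replacing free simplicial trees in $CV_n$, and leveraging the relative Small BCC proposition (Proposition~\ref{P:SmallBBT}) to drive the bounded cancellation constant to zero. Fix $X \in \partial \free \setminus \partial \ffa$ which is $T$-bounded. Using the canonical identification of $\partial T_0$ with $\partial \free \setminus \partial \ffa$ for a Grushko $(\free,\ffa)$-tree $T_0 \in \relCV$ (peripheral ends get absorbed into the vertices fixed by peripheral subgroups), pick a ray $\rho : [0, \infty) \to T_0$ representing $X$, and an equivariant map $h : T_0 \to T$ with $C := \op{BCC}(h) < \infty$. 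Since $X$ is $T$-bounded, $h(\rho)$ has compact closure in $\overline{T}$, so the limit set
\[
A(h, \rho) := \bigcap_{N \geq 0} \overline{h(\rho|_{[N, \infty)})}
\]
is a nonempty compact subset of $\overline{T}$.

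The first key step is a diameter bound: $\op{diam}(A(h, \rho)) \leq C$. Given $p, p' \in A(h, \rho)$, interleave sequences $t_n < s_n < t_{n+1} \to \infty$ with $h(\rho(t_n)) \to p$ and $h(\rho(s_n)) \to p'$. Bounded cancellation places the segment $h(\rho|_{[t_n, t_{n+1}]})$, which contains $h(\rho(s_n))$, inside the $C$-neighborhood of the geodesic from $h(\rho(t_n))$ to $h(\rho(t_{n+1}))$, and this geodesic shrinks to $\{p\}$. Hence $d(p, p') \leq C$. The second key step is showing $A(h, \rho)$ is independent of the triple $(T_0, h, \rho)$: given another triple $(T_0', h', \rho')$, choose an equivariant map $g : T_0 \to T_0'$; by cocompactness $h' \circ g$ and $h$ differ by a bounded amount, and the tightening of $g(\rho)$ has the same end as $\rho'$, so $A(h, \rho) = A(h' \circ g, \rho) = A(h', \rho')$. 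Call this common set $A(X)$.

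Applying Proposition~\ref{P:SmallBBT}, for every $\epsilon > 0$ there exists a pair $(T_0^{\epsilon}, h^{\epsilon})$ with $\op{BCC}(h^{\epsilon}) < \epsilon$, forcing $\op{diam}(A(X)) < \epsilon$. Since this holds for all $\epsilon$, $A(X)$ reduces to a single point $\mathcal{Q}(X) \in \overline{T}$, giving existence and uniqueness. For the final ``initial-segment'' clause, since $\mathcal{Q}(X) \in A(h,\rho)$, there is $t_n \to \infty$ with $h(\rho(t_n)) \to \mathcal{Q}(X)$; for any $\delta > 0$ and any large $t$ lying between some $t_n$ and $t_{n+1}$, bounded cancellation applied to $h(\rho|_{[t_n, t_{n+1}]})$ gives $d(h(\rho(t)), \mathcal{Q}(X)) < C + \delta$, which yields containment in the $2C$-ball about $\mathcal{Q}(X)$ outside an initial part of $\rho$. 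The main obstacle compared with the absolute case is verifying the two preparatory inputs in the relative setting: the identification $\partial T_0 \leftrightarrow \partial \free \setminus \partial \ffa$ (which is why $X$ is assumed non-peripheral) and the relative bounded cancellation lemma together with the Small BCC proposition; once these are in place, as they are in the excerpt above, the argument transfers.
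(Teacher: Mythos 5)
The paper does not actually reprove this proposition: it observes that the argument of \cite[Proposition 3.1]{LL:NSDynamics} applies verbatim once one notes that $T_0$ may be taken in $\relCV$ (because $\ffa$ is elliptic in $T$) and that the relative Small BCC statement (Proposition~\ref{P:SmallBBT}) is available. You correctly identify these two transfer points, but your reconstruction of the Levitt--Lustig argument itself has two genuine gaps.

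First, boundedness of $h(\rho)$ does not give compact closure in $\overline{T}$: the completion of a very small tree with dense orbits is not proper (any ball meets each orbit in an infinite set and typically contains infinitely many branch points, and there is no total boundedness), so your limit set $A(h,\rho)=\bigcap_N \overline{h(\rho|_{[N,\infty)})}$ is not obviously nonempty; for a map with large $\op{BCC}$ the tail of $h(\rho)$ need not have any accumulation point at all. Second, and more seriously, the independence step fails as written: if $h$ and $h'\circ g$ differ pointwise by at most $D$, you only get that $A(h,\rho)$ and $A(h'\circ g,\rho)$ lie within Hausdorff distance $D$ of each other, not that they coincide; and $D$ is of the order of $d(h(p_0),h'(g(p_0)))$ for a basepoint $p_0$, which is not controlled by the bounded cancellation constants. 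Since your final step (shrinking the diameter of the common set $A(X)$ to $0$ via Proposition~\ref{P:SmallBBT}) needs both nonemptiness and well-definedness of $A(X)$, the construction of $\mathcal{Q}(X)$ does not go through as stated. This comparison between different maps is precisely where Levitt and Lustig do real work: the small-$\op{BCC}$ approximations of \cite[Proposition 2.2]{LL:NSDynamics} are produced by successive factorizations $T_0\to T_0'\to\cdots\to T$, so the images of the ray under consecutive maps in the system are directly comparable, and $\mathcal{Q}(X)$ arises as the limit of a Cauchy sequence in $\overline{T}$ rather than as an a priori nonempty intersection of closures. Either reproduce that factorization argument or, as the paper does, simply invoke \cite[Proposition 3.1]{LL:NSDynamics} together with the two relative adjustments you already noted.
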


In \cite[Proposition 3.1]{LL:NSDynamics} the above lemma is proved for any tree with dense orbits in the closure of outer space hence it applies to our setting as well. Since the free factors in $\ffa$ are elliptic in $T$ we can take the tree $T_0$ in the original proof to be such that $T_0 \in \relCV$. Also by \cite[Remark 3.7]{LL:NSDynamics}, if $\mathcal{Q}(X) = \mathcal{Q}(X^{\p})$ for a bi-infinite geodesic $\gamma$ with end points $X, X^{\p}$ then $h(\gamma)$ lies in a $2BCC(h)$-neighborhood of $\mathcal{Q}(X)$.  

\begin{definition}[Dual lamination of a tree  \cite{CHL:DualLaminationsII}]\label{D:QLamination} Let $T$ be a tree with dense orbits in $\partial CV_n$: $$L_{\mathcal{Q}}(T) = \{\{X, X^{\p}\} \in \partial^2 \free | \, \mathcal{Q}(X) = \mathcal{Q}(X^{\p})\}.$$ It is shown in \cite{CHL:DualLaminationsII} that $L_{\mathcal{Q}}(T)$ is the same as $L(T)$ (see Section~\ref{subsec:Laminations} for definition). 

For an algebraic lamination $L$, let \emph{support} $s(L) \subset \partial \free \setminus \partial \ffa$ be the set of all $X \in \partial \free$ such that $L$ contains some pair $\{ X, X^{\prime}\}$. The laminations $L_{\mathcal{Q}}(\StableTree)$ and $L_{\mathcal{Q}}(\UnstableTree)$ are $\free$-invariant and $\oo$-invariant. 

\end{definition}

\begin{definition}[Eigenray]
Let $f_0: \tau \to \tau$ be a relative train track map or an $\ffa$-train track map. Let $f: T_{\tau} \to T_{\tau}$ be a lift of $f_0$ to the universal cover $T_{\tau}$ of $\tau$. Let $v_0$ be a fixed vertex in $\tau$ with a fixed direction $e$, where $e$ is an edge in an EG stratum. Let $v$ be a lift of $v_0$ to $T_{\tau}$. Then a lift based at $v$ of the ray $\lim_{n \to \infty}f_0^n(e)$ is called an \emph{eigenray of $f$ based at $v$}, denoted by $X_v \in \partial T_{\tau}$.  
\end{definition}

Recall from Definition~\ref{D: A TrainTrack} the $\ffa$-train track map $\phi_0: G \to G$ representing $\oo$ and a lift to the universal cover $\phi: \RelTTTree \to \RelTTTree$. 
Let $E\lamination$ be the set of all eigenrays of $\phi$.

\begin{remark} In the absolute case of a fully irreducible automorphism, any eigenray is in fact a half-leaf of $\lamination$, that is, it is contained in a generic leaf of $\lamination$. Thus it suffices to consider points in $s(\lamination)$ for the proof of \cite[Lemma 5.2]{LL:NSDynamics}. In the relative case, an eigenray based at a vertex with trivial stabilizer is a half-leaf of $\lamination$ but an eigenray based at a vertex with non-trivial vertex stabilizer might not be a half-leaf of $\lamination$. 
It will be a half-leaf of a diagonal leaf of $L_{\mathcal{Q}}(\UnstableTree)$ as explained below.  \end{remark} 

\begin{lemma}\label{L:eigenrays}
$s(L_{\mathcal{Q}}(\UnstableTree))$ contains $s(\lamination)$ and $E\lamination$. \end{lemma}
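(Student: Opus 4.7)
The plan is to verify the two inclusions separately, in both cases exploiting the identity $L_{\mathcal{Q}}(\UnstableTree) = L(\UnstableTree) = \bigcap_{\epsilon > 0} L_\epsilon(\UnstableTree)$ from Definition~\ref{D:QLamination}. A key input throughout is that $\UnstableTree$ is projectively $\oo$-fixed with dilation factor $1/\PFevalue$, so that $l_{\UnstableTree}(\oo^n(g)) = \PFevalue^{-n} l_{\UnstableTree}(g)$ for every conjugacy class $g \in \free$.

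For the inclusion $s(\lamination) \subseteq s(L_{\mathcal{Q}}(\UnstableTree))$, I would fix a non-peripheral $g$ and a generic leaf $\{X, X^{\p}\}$ of $\lamination$. Since leaves of $\lamination$ arise as weak limits of $\phi_0$-iterates of an edge of the top EG stratum (Lemma~\ref{L:BFH lamination}), a diagonal argument produces elements $h_n \in \free$ such that the axis endpoints of $h_n \oo^n(g) h_n^{-1}$ converge to $\{X, X^{\p}\}$ in $\partial^2 \free$. The translation lengths $l_{\UnstableTree}(h_n \oo^n(g) h_n^{-1}) = \PFevalue^{-n} l_{\UnstableTree}(g)$ tend to $0$, so once $n$ is large the approximating pairs lie inside $L_\epsilon(\UnstableTree)$ for any fixed $\epsilon > 0$, and closedness yields $\{X, X^{\p}\} \in L_\epsilon(\UnstableTree)$; intersecting over $\epsilon$ gives $\{X, X^{\p}\} \in L(\UnstableTree)$.

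For the inclusion $E\lamination \subseteq s(L_{\mathcal{Q}}(\UnstableTree))$, let $X_v$ be the eigenray coming from a $\phi_0$-fixed direction $e$ at the image of $v$ in $G$; after passing to a power of $\oo$ and choosing the lift appropriately, we may assume $\phi(v) = v$. If $\op{Stab}(v)$ is trivial, then $X_v$ is a half-leaf of $\lamination$ and the first inclusion already applies. Otherwise $A := \op{Stab}(v)$ is a conjugate of some free factor in $\ffa$. Choose an equivariant Lipschitz map $h : \RelTTTree \to \UnstableTree$ intertwining $\phi$ with a $(1/\PFevalue)$-contraction $\phi_{\UnstableTree}$, so that $h\circ\phi = \phi_{\UnstableTree}\circ h$ and $\phi_{\UnstableTree}$ fixes $h(v)$. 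Then $h(\phi^n(e)) = \phi_{\UnstableTree}^n(h(e))$ is a path of length $\PFevalue^{-n}|h(e)|$ based at $h(v)$, so $h(X_v)$ is bounded and accumulates only at $h(v)$. Hence $X_v$ is $\UnstableTree$-bounded with $\mathcal{Q}(X_v) = h(v)$. For any $a \in A \setminus \{1\}$, triviality of arc stabilizers in $\RelTTTree$ forces $a \cdot e \neq e$, so $a X_v \neq X_v$ in $\partial \free$; by $\free$-equivariance, $\mathcal{Q}(a X_v) = a \cdot h(v) = h(v) = \mathcal{Q}(X_v)$ (using that $A$ fixes $h(v)$), placing $\{X_v, a X_v\}$ in $L_{\mathcal{Q}}(\UnstableTree)$.

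The principal obstacle is the second inclusion: unlike the absolute case, an eigenray based at a vertex with non-trivial stabilizer need not be a half-leaf of $\lamination$, so one cannot reduce to the first part. The argument instead relies on two features particular to the relative setting, namely the contracting action of $\oo$ on $\UnstableTree$ (which forces $h(X_v)$ to collapse to a point) and the fact that peripheral subgroups fix unique points in any $(\free, \ffa)$-tree (which identifies the $\mathcal{Q}$-images of distinct $A$-translates of $X_v$).
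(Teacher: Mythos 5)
Your first inclusion is essentially the paper's own route: the paper simply invokes Lemma~\ref{L:DualLamination}, whose proof is your computation that $l_{\UnstableTree}(\oo^m(w))\to 0$ combined with closedness of $L(\UnstableTree)$. (Minor point: the dilation factor of $\oo$ on $\UnstableTree$ is $1/\lambda_{\oo^{-1}}$, not $1/\PFevalue$; only the fact that it is strictly less than $1$ is used, so this is harmless.) Your closing step for the eigenrays --- producing a second eigenray $aX_v$ at $v$ from an element $a$ of the vertex stabilizer and using trivial arc stabilizers to see $aX_v\neq X_v$ --- also matches the paper.

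The core of your second inclusion, however, rests on a map that cannot exist. You postulate an equivariant Lipschitz map $h:\RelTTTree\to\UnstableTree$ with $h\circ\phi=\phi_{\UnstableTree}\circ h$, where $\phi_{\UnstableTree}$ is a strict contraction (ratio $\lambda^{-1}$, $\lambda>1$). Because the direction of $e$ at $v$ is $\phi$-fixed, $\phi^{n-1}(e)\subseteq\phi^{n}(e)$ as paths issuing from $v$, hence $h(\phi^{n-1}(e))\subseteq h(\phi^{n}(e))$ and so $\op{diam}\,h(\phi^{n}(e))\geq\op{diam}\,h(\phi^{n-1}(e))$; but the intertwining relation forces $\op{diam}\,h(\phi^{n}(e))=\lambda^{-1}\op{diam}\,h(\phi^{n-1}(e))$. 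Together these give $\op{diam}\,h(e)=0$ for every edge, so $h$ is constant on the connected tree $\RelTTTree$ and $\free$ fixes a point of $\UnstableTree$, contradicting minimality. (The commuting map of Gaboriau--Jaeger--Levitt--Lustig intertwines $\phi$ with the \emph{expanding} homothety on $\StableTree$; there is no analogue toward $\UnstableTree$.) Consequently your derivation that $h(X_v)$ accumulates only at $h(v)$, i.e.\ that $X_v$ is $\UnstableTree$-bounded with $\mathcal{Q}(X_v)=\tilde v$, has no foundation. The paper establishes these two facts by a different mechanism: boundedness follows because arbitrarily long subsegments of the eigenray are crossed by generic leaves of $\lamination$, whose images lie in $2\op{BCC}(\nu)$-balls by the first inclusion and \cite[Remark 3.7]{LL:NSDynamics}; the identification $\mathcal{Q}(X_v)=\tilde v$ is then obtained using small-BCC maps from Proposition~\ref{P:SmallBBT} and comparing a subsegment $\sigma_g$ of $R_v$ joining $v$ to $gv$ with its translates further along the eigenray. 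You would need to substitute an argument of this kind for the nonexistent intertwining map.
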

\begin{proof}
The statement that $s(L_{\mathcal{Q}}(\UnstableTree))$ contains $s(\lamination)$ follows from Lemma~\ref{L:DualLamination} where it is shown that $L_{\mathcal{Q}}(\UnstableTree)$ contains $\lamination$. Let $R_v: \mathbb{R}^+ \to T_G$ be a ray representing an eigenray $X_v$ of $\phi$ based at a vertex $v$ of $T_G$ with non-trivial stabilizer. Let $R_v(\infty) = X_v \in \partial T_G$, which is identified with a point in $\partial \free$, also denoted by $X_v$. Let $\nu: T_G \to \UnstableTree$ be an $\free$-equivariant map. 

We first show that $\nu(R_v)$ is $\UnstableTree$-bounded. Suppose not. Then for every $C>0$ and every $t_0>0$ there exist $t_2>t_1>t_0$ such that $d_{\UnstableTree}(\nu(R_v(t_2)), \nu(R_v(t_1))) > C$. Now choose $C > 2BCC(\nu)$. Since $R_v$ is an eigenray, a generic leaf $l^+$ of $\lamination$ crosses the segment $\sigma_v = [R_v(t_2), R_v(t_1)]$ of $R_v$. By \cite[Remark 3.7]{LL:NSDynamics}, the $\nu$ image of $l^+ = \{X, X^{\p}\}$ is in a $2BCC(\nu)$ neighborhood of $\mathcal{Q}(X) = \mathcal{Q}(X^{\p})$. This implies that the diameter of $\sigma_v$ under $\nu$ is less than $2BCC(\nu)$, which is a contradiction. 

Next we want to prove that $\mathcal{Q}(X_v) = \tilde{v}$ where $\tilde{v}$ is the point in $\UnstableTree$ whose stabilizer contains the stabilizer of $v$. Given $\epsilon>0$, let $h: T_0 \to \UnstableTree$ be an $\free$-equivariant map with $BCC(h) < \epsilon$ as given by Proposition~\ref{P:SmallBBT}. Let $\mu: T_G \to T_0$ be an $\free$-equivariant map and let $\nu= h \circ \mu$. Let $\overline{R}_v = \mu(R_v)$. Then by Proposition~\ref{P:Q map}, $h(\overline{R}_v)$ is contained in a $2BCC(h)$ neighborhood of $\mathcal{Q}(X_v)$ except an initial segment. Suppose $\mathcal{Q}(X_v) \neq \tilde{v}$. 
There exists a $g \in \free \setminus \ffa$ for which the following is true: 
let $\sigma_g$ be the subsegment of $R_v$ joining $v$ and $gv$ such that the length of $\overline{\sigma}_g := \mu(\sigma_g)$ is non-zero and $h(\overline{\sigma}_g)$ is not contained in a $2BCC(h)$-neighborhood of $\mathcal{Q}(X_v)$. Since $R_v$ is an eigenray it contains translates of the segment $\sigma_g$. There exists some translate $\sigma_g^{\p}$ of $\sigma_g$ joining points $u, gu$ on $R_v$ such that $h(\overline{\sigma}_g^{\p})$, where $\overline{\sigma}_g^{\p}:=\mu(\sigma_g^{\p})$, is in a $2BCC(h)$-neighborhood of $\mathcal{Q}(X_v)$ because $h(\overline{R}_v)$ is $\UnstableTree$-bounded. 
But $g$ acts by isometries on $\UnstableTree$ so the diameters of $h(\overline{\sigma}_g)$ and $h(\overline{\sigma}_g^{\p})$ cannot be different. Thus $\tilde{v}$ is in a $2BCC(h)$-neighborhood of $\mathcal{Q}(X_v)$. Since $\epsilon$, which bounds $BCC(h)$, was arbitrary we have that $\mathcal{Q}(X_v) = \tilde{v}$.     


Now we show that for every vertex $v$ of $T_G$ with non-trivial stabilizer there are at least two eigenrays $X_v, X_v^{\p}$ based at $v$. This will imply that $\{X_v, X_v^{\p}\} \in L_{\mathcal{Q}}(\UnstableTree)$ and hence $E\lamination \subset s(L_{\mathcal{Q}}(\UnstableTree))$. If the image of $v$ in $G = \RelTTTree/ \free$ has at least two gates then each gate will have a fixed direction which gives us different eigenrays based at $v$. If there is only one gate at $v$ then in $T_G$ the orbit of a given ray $R_v$ under the stabilizer of $v$ gives distinct eigenrays based at $v$. 
 \end{proof}

\begin{remark}
From the above proposition we get that the following two types of leaves are contained in $L_{\mathcal{Q}}(\UnstableTree)$:

\begin{enumerate}[(a)]
\item leaves of the lamination $\lamination$, which we call $\lamination$-leaves, and,  
\item leaves obtained by concatenating two eigenrays, which are called \emph{diagonal} leaves.
\end{enumerate}
\end{remark}

The next proposition, which is the relativization of \cite[Proposition 5.1]{LL:NSDynamics}, is the main technical proposition of this section. 

\begin{proposition} \label{P:5.1}
If $T\in \relCVClo$ is a minimal $(\free, \ffa)$-tree with dense orbits and trivial arc stabilizers then at least one of the following is true: 
\begin{enumerate}[(a)]
\item there exists a generic leaf $\{X, X^{\prime} \}$ of $\lamination$ or $\Rlamination$ such that $\mathcal{Q}(X) \neq \mathcal{Q}(X^{\prime})$, 
\item  there exists a diagonal leaf $\{X, X^{\prime} \}$ of $L_{\mathcal{Q}}(\UnstableTree)$ or $L_{\mathcal{Q}}(\StableTree)$ such that $\mathcal{Q}(X) \neq \mathcal{Q}(X^{\prime})$.
\end{enumerate} 
\end{proposition}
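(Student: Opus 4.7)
The plan is to argue by contradiction, following the strategy of \cite[Proposition~5.1]{LL:NSDynamics} adapted to the relative setting. Suppose both (a) and (b) fail, so that $\mathcal{Q}_T$ identifies the endpoints of every generic leaf of $\lamination \cup \Rlamination$ and of every diagonal leaf of $L_{\mathcal{Q}}(\UnstableTree) \cup L_{\mathcal{Q}}(\StableTree)$. By Lemma~\ref{L:eigenrays} and the Remark immediately following it, these two families of leaves exhaust $L_{\mathcal{Q}}(\UnstableTree) \cup L_{\mathcal{Q}}(\StableTree)$, so that the inclusions $L_{\mathcal{Q}}(\UnstableTree) \subseteq L(T)$ and $L_{\mathcal{Q}}(\StableTree) \subseteq L(T)$ hold.

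Next, I would manufacture a proper subgroup $H \leq \free$ that carries $\lamination$ and that satisfies the peripheral condition $H \cap A \in \{1, A\}$ (up to conjugation) for every $[A] \in \ffa$, so that Lemma~\ref{L:WhGraph}(c) applies. The candidate is $H := \op{Stab}_{\free}(p)$ for $p := \mathcal{Q}_T(X) \in \overline{T}$, where $X$ is an endpoint of a fixed generic leaf $\{X, X^{\p}\}$ of $\lamination$. Since $\free$ acts minimally on the non-trivial tree $T$, $H$ is a proper subgroup. The $\free$-equivariance of $\mathcal{Q}_T$ together with the standing assumption that $\mathcal{Q}_T$ collapses every $\lamination$-leaf implies that the $H$-orbit of $\{X, X^{\p}\}$ realizes a generic leaf of $\lamination$ in the graph $G_H := \RelTTTree / H$, analogously to the carrying argument in \cite[Proposition~2.4]{BFH:Laminations}, so $H$ carries $\lamination$. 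The peripheral condition follows from Lemma~\ref{L:eigenrays}: every peripheral $A$ fixes the common $\mathcal{Q}_{\UnstableTree}$-image of its eigenrays based at a vertex $v$, and the collapse of the corresponding diagonal leaves by $\mathcal{Q}_T$ forces $A$ to lie in or miss $\op{Stab}_{\free}(\mathcal{Q}_T(X_v))$; by replacing $p$ with a suitable $\free$-translate, this can be arranged for every $[A] \in \ffa$ simultaneously.

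Applying Lemma~\ref{L:WhGraph}(c) gives $[\free : H] < \infty$. Since $H$ is proper and $\oo^k$-invariant for some $k \geq 1$ (because $\oo$ permutes the countable set of $\mathcal{Q}_T$-images of generic $\lamination$-leaves in a fundamental domain, and after a power we may fix one such image up to the $\free$-action), the corresponding non-trivial finite-sheeted cover $G^{\p\p} \to G$ produced by Lemma~\ref{L:WhGraph}(b) admits a lift $\phi^{\p\p}$ of $\phi_0^k$. The peripheral structure on $G^{\p\p}$, together with the deck-transformation group, then determines a proper $\oo^k$-invariant free factor system of $\free$ strictly containing $\ffa$, contradicting the full irreducibility of $\oo$ relative to $\ffa$.

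The principal obstacle will be the second paragraph: simultaneously establishing that $H$ carries $\lamination$ \emph{and} that $H$ satisfies the peripheral condition of Lemma~\ref{L:WhGraph}(c). The peripheral condition is the key relative feature with no analogue in \cite{LL:NSDynamics}, and the diagonal leaves appearing in case (b) are precisely what feed the peripheral subgroups into $H$ so that the hypothesis of the Whitehead graph lemma can be verified. Careful tracking of the $\mathcal{Q}_T$-images of eigenrays at vertices of $\RelTTTree$ with non-trivial stabilizer will be essential, as will the use of Proposition~\ref{P:Q map} to control where the initial segments of ray images sit in $\overline{T}$.
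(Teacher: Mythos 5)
Your reduction to a contradiction hypothesis is the right starting point, but the core of your argument --- the second and third paragraphs --- has a genuine gap and does not match how this proposition can actually be proved. The claim that $H := \op{Stab}_{\free}(\mathcal{Q}(X))$ carries $\lamination$ is unjustified and is false in general. For a tree $T$ with dense orbits and trivial arc stabilizers, the stabilizer of a point of $\overline{T}$ is typically trivial, in which case $H$ carries nothing; and even when it is non-trivial, the equality $\mathcal{Q}(X) = \mathcal{Q}(X^{\p})$ is a metric statement (the image of the leaf in $T$ has diameter at most $2\op{BCC}(h)$, cf.\ Proposition~\ref{P:Q map}) and says nothing about the leaf lifting to the core of the cover $\RelTTTree/H$, which is what ``carrying'' in the sense of Lemma~\ref{L:WhGraph}(c) requires. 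A generic leaf of $\lamination$ is filling relative to $\ffa$, so its endpoints do not lie in the boundary of any proper infinite-index subgroup satisfying the peripheral condition --- that is precisely the content of Lemma~\ref{L:WhGraph}(c), and in the paper it is used only in the simplicial case of Lemma~\ref{L:Tcases}, where vertex stabilizers are known to have infinite index. Your endgame also does not close: the conclusion of Lemma~\ref{L:WhGraph}(c) is that $H$ has finite index, which is not by itself a contradiction, and a finite-index subgroup (or a finite-sheeted cover) does not determine a proper $\oo$-invariant free factor system containing $\ffa$, so the appeal to relative full irreducibility at the end is not available.

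The actual proof runs along the Levitt--Lustig lines you cite but with a different mechanism. Assuming $\mathcal{Q}$ collapses all generic and diagonal leaves, one first shows (Lemma~\ref{L:ConnectingEdges}, using connectivity of the relative Whitehead graph and the transverse covering $\mathcal{Y}(\lamination)$, with diagonal leaves bridging distinct subtrees at a vertex with non-trivial stabilizer) that any two edges at a vertex of $\RelTTTree$ are connected by a chain of turns crossed by $\lamination$-leaves or diagonal leaves. This yields Lemma~\ref{L:5.2}: for $Z, Z^{\p} \in s(\lamination) \cup E\lamination$ the distance between $\mathcal{Q}(\oo^p(Z))$ and $\mathcal{Q}(\oo^p(Z^{\p}))$ tends to $0$. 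Applying this to $\oo^{-1}$ one builds maps $i_p : \RelTTTree \to \overline{T}$ with $i_p \circ \phi^p$ equivariant and $\op{BCC}(i_p) \to 0$ (Lemma~\ref{L:5.3}), whence every conjugacy class has translation length $0$ in $T$ --- contradicting that $T$ is a non-trivial minimal tree. If you want to salvage your write-up, you should replace the stabilizer construction by this chain-of-turns argument; the peripheral subgroups enter not through a subgroup $H$ but through the diagonal leaves that connect the distinct pieces of the transverse covering at vertices with non-trivial stabilizer.
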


Since diagonal leaves are obtained by concatenating eigenrays, $(b)$ implies $(a)$ in the above proposition. Morally, the above proposition says that if $T\in \relCVClo$ is a minimal $(\free, \ffa)$-tree with dense orbits such that $L_{\mathcal{Q}}(T)$ contains both $L_{\mathcal{Q}}(\StableTree)$ and $L_{\mathcal{Q}}(\UnstableTree)$ then $T$ is in fact a trivial tree. 
The proof of the proposition depends on Lemma~\ref{L:5.2} and Lemma~\ref{L:5.3}. We need the following lemma for the proof of Lemma~\ref{L:5.2}. 

\begin{lemma}\label{L:ConnectingEdges}
If $e, e^{\p}$ are edges with a common initial vertex $v$ in $\RelTTTree$, then there exists a sequence $e_0 = e, e_1, \ldots, e_k = e^{\p}$ of distinct edges starting at $v$ such that every edge path $\overline{e_i}e_{i+1}$ is crossed by either a $\lamination$-leaf or a diagonal leaf of $L_{\mathcal{Q}}(\UnstableTree)$.   
\end{lemma}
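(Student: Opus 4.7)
The plan is to reduce the lemma to the connectivity of the relative Whitehead graph at $V = \pi(v)$, established in Lemma~\ref{L:WhGraph}(a), by lifting a walk in this graph to a sequence of edges at $v$ in $\RelTTTree$. Write $A = \op{Stab}(v)$. When $A$ is trivial, $\pi$ restricts to a bijection on edges incident at $v$, the relative Whitehead graph at $V$ is the ordinary Whitehead graph (no distinguished vertex $v_A$), and any simple path $\pi(e) = \overline{e}_0, \overline{e}_1, \ldots, \overline{e}_k = \pi(e')$ in this connected graph lifts uniquely at $v$, with each turn crossed by the $\RelTTTree$-lift of a witnessing $\lamination$-leaf.

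Now assume $A$ is non-trivial. An edge $\overline{a}$ at $V$ is adjacent to $v_A$ in the relative Whitehead graph iff some preimage $a' \in G'$ forms a $\lamination(G')$-legal turn with an edge of $G_{r-1}$; the argument of Lemma~\ref{L:WhGraph}(b)(ii) then shows that $\overline{a}$ acquires a fixed point after a power of $\phi_0$, and hence gives rise to an eigenray of $\phi$ based at $v$. By the proof of Lemma~\ref{L:eigenrays}, two distinct eigenrays at $v$ concatenate into a diagonal leaf of $L_\mathcal{Q}(\UnstableTree)$; moreover, for any $g \in A$ the translate $g \cdot X_v$ of an eigenray $X_v$ at $v$ is again an eigenray at $v$, now starting with $g$ times the first edge of $X_v$. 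Consequently, if $\overline{a}$ and $\overline{b}$ are both adjacent to $v_A$, then for any chosen lifts of $\overline{a}$ and $\overline{b}$ at $v$, the turn between them is crossed by a diagonal leaf.

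Using connectivity of the relative Whitehead graph, I would choose a walk $\pi(e) = w_0, w_1, \ldots, w_m = \pi(e')$ visiting $v_A$ at least once (inserting a $v_A$-detour when $\pi(e) = \pi(e')$ so the walk is non-trivial). Process it left-to-right: each direct transition $(w_j, w_{j+1})$ between edges at $V$ lifts by a $\lamination$-leaf, which deterministically produces the next lift from the previous; each two-step transition $(w_j, v_A, w_{j+2})$ lifts by a diagonal leaf, with the next lift a free choice within the $A$-orbit of lifts of $w_{j+2}$. At the final $v_A$-transition, use the $A$-equivariance of the remaining deterministic steps to choose the lift so that the walk terminates at exactly $e'$, not merely at some other $A$-translate of $\pi(e')$.

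The principal obstacle is ensuring the output sequence consists of distinct edges. For a walk with distinct consecutive projections to $V$ this is automatic, since distinct projections force distinct lifts; in the rotational case $\pi(e) = \pi(e')$, the natural choice is a mirrored walk $\overline{e}, x_1, \ldots, x_s, v_A, x_s, \ldots, x_1, \overline{e}$ whose $v_A$-transition implements the rotation $g \in A$ with $e' = g e$, and the entries $\hat{x}_i$ and $g\hat{x}_i$ are automatically distinct because $A$ acts freely on edges of the Grushko tree $\RelTTTree$.
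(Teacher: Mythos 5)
Your handling of the trivial-stabilizer case matches the paper, but in the non-trivial-stabilizer case there is a genuine gap at the step where you bridge through $v_A$. You assert that any direction $\overline{a}$ adjacent to $v_A$ in the relative Whitehead graph ``gives rise to an eigenray of $\phi$ based at $v$'' with the prescribed lift $\hat{a}$ as its first edge. But an eigenray based at $v$ must, by definition, begin with a lift of a \emph{periodic direction} of the derivative map $D\phi_0$ at $\pi(v)$, and adjacency to $v_A$ does not imply periodicity: what the argument of Lemma~\ref{L:WhGraph}(b)(ii) actually produces is a fixed point in the \emph{interior} of the edge $a$, whose associated ray is a half-leaf of $\lamination$ based at that interior point --- not an eigenray based at $v$, and not something whose concatenation at $v$ yields a diagonal leaf of $L_{\mathcal{Q}}(\UnstableTree)$. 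The paper's own Example~\ref{E:Disconnected Relative Wh Graph} defeats your claim: for the $\ffa$-train track map on the two-petal relative rose one has $D\phi_0(\overline{c})=\overline{d}$ and $D\phi_0(\overline{d})=\overline{d}$, so $\overline{c}$ is not periodic even though $\overline{c}$ is adjacent to $v_A$ (indeed it is adjacent \emph{only} to $v_A$, since $c$ is always followed by $a$ in every leaf). Hence no eigenray based at $v$ starts with a lift of $\overline{c}$, no diagonal leaf crosses any turn of the form $\{\hat{\overline{c}},\cdot\}$ at $v$, and the only turns at $v$ with side $\hat{\overline{c}}$ crossed by $\lamination$-leaves involve the single lift of $d$ reached through the specific group element $a$. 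Your lifted walk breaks exactly at such a vertex.

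The repair is essentially what the paper does with the transverse covering $\mathcal{Y}(\lamination)$: do not insist that the diagonal leaf start at the lift you arrived at. Instead, first connect $e$ by $\lamination$-leaf turns to the first edge $f$ of \emph{some} eigenray based at $v$ lying in the same piece $Y_e$ (here one uses that every gate at $\pi(v)$ contains a fixed direction), do the same for $e^{\p}$ inside $Y_{e^{\p}}$, and insert a single diagonal-leaf turn $\overline{f}f^{\p}$ to pass between the two pieces. Your Whitehead-graph bookkeeping (deterministic lifting of direct turns, the $A$-orbit of freedom at the $v_A$-transition used to land on $e^{\p}$ exactly, and the distinctness discussion) is reasonable in spirit, but as written the key existence claim for the diagonal leaves fails, so the proof does not go through without rerouting every passage through $v_A$ via periodic directions reached by leaf turns first.
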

\begin{proof}
If the vertex stabilizer of $v$ is trivial then by Lemma~\ref{L:WhGraph} the Whitehead graph of $\lamination$ is connected at the vertex $v$. Hence the lemma follows by using the $\lamination$-leaves of $L_{\mathcal{Q}}(\UnstableTree)$. Now let's assume that the vertex stabilizer of $v$ is non-trivial. Consider the transverse covering $\mathcal{Y}(\lamination)$ of $\RelTTTree$ from Section~\ref{subsec:Transverse covering}. Since an element $Y$ of $\mathcal{Y}(\lamination)$ contains a generic leaf of $\lamination$, $Y$ crosses the $\free$-orbit of every edge in $\RelTTTree$. Let $Y_e$ and $Y_{e^{\p}}$ be the elements of $\mathcal{Y}(\lamination)$ that contain $e$ and $e^{\p}$ respectively. Let $E, E^{\p}$ be the set of edges with initial vertex $v$ which are in $Y_e$ and $Y_{e^{\p}}$ respectively. 

If $Y_e$ is equal to $Y_{e^{\p}}$ then the lemma follows by using $\lamination$-leaves in $L_{\mathcal{Q}}(\UnstableTree)$. 
Suppose $Y_e \neq Y_{e^{\p}}$. Let $p: \RelTTTree \to G$ be the quotient map by the action of $\free$. Every gate at the vertex $\pi(v)$ has a fixed direction. Thus we can find an eigenray $X$ in $\RelTTTree$ based at $v$ with initial edge $f$ in $E$ (since $Y_e$ crosses $\free$-orbit of every edge at $v$). Similarly, we get an eigenray $X^{\p}$ based at $v$ and initial edge $f^{\p}$ in $E^{\p}$. The diagonal leaf $\{X, X^{\p}\}$ of $L(\UnstableTree)$ crosses $\overline{f} f^{\p}$. Now we have a sequence of edges $e_0 = e, e_1, \ldots, e_l=f, e_{l+1}=f^{\p}, e_{l+2}, \ldots, e_k = e^{\p}$ starting at $v$ such that every edge path $\overline{e_i}{e_{i+1}}$ for $i \neq l$ is crossed by a $\lamination$-leaf and $\overline{e_l}e_{l+1}$ is crossed by a diagonal leaf.  
\end{proof}

\begin{lemma}\label{L:5.2}
Suppose $\mathcal{Q}(X) = \mathcal{Q}(X^{\prime})$ for every generic leaf $\{X, X^{\p}\}$ of $\lamination$ and for every diagonal leaf $\{X, X^{\p}\}$ of $L_{\mathcal{Q}}(\UnstableTree)$. Let $Z, Z^{\prime}$ belong to $s(\lamination) \cup E\lamination$. Then the distance in $\overline{T}$ between $\mathcal{Q}(\oo^p(Z))$ and $\mathcal{Q}(\oo^p(Z^{\prime}))$ tends to 0 as $ p \to + \infty$.\end{lemma}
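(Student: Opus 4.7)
The plan is to exploit Lemma~\ref{L:ConnectingEdges} together with a small bounded-cancellation argument (Proposition~\ref{P:SmallBBT}) to show that $d_{\overline{T}}(\mathcal{Q}(\oo^p Z), \mathcal{Q}(\oo^p Z'))$ is bounded above uniformly in $p$ by a quantity that may be made arbitrarily small, so that the distance in fact tends to zero.

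First I would reduce to the situation where $\oo^p(Z)$ and $\oo^p(Z')$ are represented by rays $\rho_p, \rho_p'$ in $\RelTTTree$ issuing from a common vertex $v_p$ with initial edges $e_p, e_p'$. This is possible by $\free$-equivariance of $\mathcal{Q}$, since $\free$ acts by isometries on $T$ and we may replace $\oo^p Z, \oo^p Z'$ by a common translate. Since $\RelTTTree/\free = G$ is a finite graph and vertices have bounded valence, only finitely many vertex-with-initial-edge patterns arise.

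Next, given $\epsilon > 0$, I would use Proposition~\ref{P:SmallBBT} to choose an equivariant map $h_\epsilon : \RelTTTree \to T$ (obtained as the composition $\nu_\epsilon \circ \mu$ of a map $\mu: \RelTTTree \to T_0$ with $T_0 \in \RelCV$ of small volume and the given map $\nu_\epsilon : T_0 \to T$ isometric on edges with $\op{BCC}(\nu_\epsilon) < \epsilon$) so that $\op{BCC}(h_\epsilon) < C_0 \epsilon$ for a fixed constant $C_0$. By Proposition~\ref{P:Q map} and \cite[Remark~3.7]{LL:NSDynamics}, for any leaf $\{X, X'\}$ of an algebraic lamination with $\mathcal{Q}(X)=\mathcal{Q}(X')$, the $h_\epsilon$-image of the entire bi-infinite line realizing $\{X, X'\}$ lies within $2\op{BCC}(h_\epsilon)$ of $\mathcal{Q}(X)$; under the hypothesis of the lemma this applies to every generic leaf of $\lamination$ and every diagonal leaf of $L_{\mathcal{Q}}(\UnstableTree)$.

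Now I would apply Lemma~\ref{L:ConnectingEdges} at $v_p$ to obtain a sequence of edges $e_0 = e_p, e_1, \dots, e_{k_p} = e_p'$ at $v_p$ such that each turn $\overline{e_i}e_{i+1}$ is crossed by a generic leaf of $\lamination$ or a diagonal leaf of $L_{\mathcal{Q}}(\UnstableTree)$. For each $i$, the $h_\epsilon$-image of the realization of the crossing leaf is contained in a $2\op{BCC}(h_\epsilon)$-ball around a single point of $T$, and this realization meets both the cylinder of rays from $v_p$ beginning with $e_i$ and the cylinder beginning with $e_{i+1}$. Chaining the triangle inequality through the $k_p+1$ cylinders, together with the cylinder-diameter estimate for rays from $v_p$ with a given initial edge (again a $2\op{BCC}(h_\epsilon)$ bound via Proposition~\ref{P:Q map}), gives
\[ d_{\overline{T}}(\mathcal{Q}(\oo^p Z), \mathcal{Q}(\oo^p Z')) \leq C_1 (k_p+1)\,\op{BCC}(h_\epsilon) \]
for a constant $C_1$.

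Finally, since $G$ is a finite graph of bounded valence, the integer $k_p$ is uniformly bounded above by some $k_{\max}$ independent of $p$. Combining with the choice of $h_\epsilon$ yields $d_{\overline{T}}(\mathcal{Q}(\oo^p Z), \mathcal{Q}(\oo^p Z')) \leq C_1(k_{\max}+1) C_0 \epsilon$ for every $p$, and since $\epsilon$ was arbitrary the distance tends to $0$. The main obstacle is to rigorously chain the bounded-cancellation estimates along the sequence of edges produced by Lemma~\ref{L:ConnectingEdges} so that the constant multiplying $\op{BCC}(h_\epsilon)$ depends only on $k_p$ (and not on $p$ or on the particular rays), and in particular to verify that this works uniformly both for generic $\lamination$-leaves and for diagonal leaves formed by eigenrays based at vertices with non-trivial stabilizer, where the relationship between $\mathcal{Q}$-collapse and the transverse covering structure enters.
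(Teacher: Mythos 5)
Your overall architecture (connect the initial edges by a chain of leaf segments via Lemma~\ref{L:ConnectingEdges}, then chain $2\op{BCC}$-balls around the $\mathcal{Q}$-images of the leaves) is the right one, but there is a genuine gap, and it is visible in the fact that your final bound is claimed uniformly in $p$: if $d_{\overline{T}}(\mathcal{Q}(\oo^p Z),\mathcal{Q}(\oo^p Z'))\le C_1(k_{\max}+1)C_0\epsilon$ held for \emph{every} $p$ and every $\epsilon$, then taking $p=0$ you would have proved $\mathcal{Q}(Z)=\mathcal{Q}(Z')$ outright for all $Z,Z'$, with no use of the dynamics of $\phi$ at all --- far too strong, and it would make Lemma~\ref{L:5.3} and Proposition~\ref{P:5.1} trivial. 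The flaw is the assertion that $\op{BCC}(h_\epsilon)<C_0\epsilon$ for the composite $h_\epsilon=\nu_\epsilon\circ\mu$. Proposition~\ref{P:SmallBBT} only controls the second factor $\nu_\epsilon:T_0\to T$; the first factor $\mu:\RelTTTree\to T_0$ has a fixed bounded cancellation constant $\op{BCC}(\mu)$, depending on $T_0$, which is in no way small. Consequently two consecutive leaves $\Gamma_i,\Gamma_{i+1}$ of your chain overlap only in the single edge $e_{i+1}$ of $\RelTTTree$, and their $\mu$-images in $T_0$ need not intersect at all: the common edge can be entirely swallowed by the cancellation of $\mu$. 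The triangle-inequality chaining then only yields $d(\mathcal{Q}(X_i),\mathcal{Q}(X_{i+1}))\le 4\epsilon+O(\op{BCC}(\mu))$, which does not tend to $0$ with $\epsilon$.

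This is exactly where the parameter $p$ must enter, and it is the step your proposal omits: one applies Lemma~\ref{L:ConnectingEdges} to the rays representing $Z$ and $Z'$ themselves (not to $\oo^pZ$, $\oo^pZ'$) and then pushes the whole chain forward by $\phi^p$. For $p$ large each common edge $\phi^p(e_{i+1})$ has length at least $C>2\op{BCC}(\mu)$, so the $\mu$-images of $\phi^p(\Gamma_i)$ and $\phi^p(\Gamma_{i+1})$ genuinely intersect in $T_0$; since the pushed-forward leaves are again generic or diagonal leaves with equal $\mathcal{Q}$-images of their endpoints, each $h(\mu(\phi^p(\Gamma_i)))$ lies in a $2\epsilon$-ball around a single point of $\overline{T}$, and chaining gives $d(\mathcal{Q}(\oo^pZ),\mathcal{Q}(\oo^pZ'))\le 4(k+1)\epsilon$ for all $p\ge p_0(\epsilon)$ --- which is precisely the statement that the distance tends to $0$. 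A secondary point: your reduction to rays issuing from a common vertex $v_p$ by ``replacing $\oo^pZ,\oo^pZ'$ by a common translate'' is not legitimate, since translating only one of the two points moves its $\mathcal{Q}$-image; the correct fix, implicit in the paper, is to run the connecting chain of two-edge leaf subpaths along the geodesic between the two initial vertices, so that consecutive subpaths overlap in an edge rather than all being based at one vertex.
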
 

\begin{proof}
We follow the proof of Lemma 5.2 in \cite{LL:NSDynamics}. If $Z$ is in $s(\lamination)$ then there exists a ray $\rho$ in $\RelTTTree$ contained in $\lamination(\RelTTTree)$ with end point $Z$. If $Z$ is in $E\lamination$ then there exists an eigenray $\rho$ of $\phi$ with end point $Z$. Let's suppose $Z \in E\lamination$ and $Z^{\p} \in s(\lamination)$ with corresponding rays $\rho$ and $\rho^{\p}$ to exhibit the proof in both cases. Let $e, e^{\prime}$ be the initial edges of the two rays $\rho$ and $\rho^{\p}$. By Lemma~\ref{L:ConnectingEdges} we can find a sequence of edges $e=e_0, e_1, e_2, \ldots, e_k = e^{\prime}$, in $\RelTTTree$ connecting $e$ to $e^{\prime}$ such that the finite subpaths $\gamma_i = e_i e_i^{\prime}$ are subpaths of either $\lamination$-leaves or diagonal leaves of $L_{\mathcal{Q}}(\UnstableTree)$ where $e_i^{\prime}$ is the same as $e_{i+1}$ but not necessarily with the same orientation. Note that the union of $\gamma_i$ and $\gamma_{i+1}$ is either a tripod or a segment of length 3. The rest of the proof follows exactly as in \cite[Lemma 5.2]{LL:NSDynamics} 
\end{proof}

The following lemma is the relativization of \cite[Proposition 5.3]{LL:NSDynamics}. 
Recall the $\ffa$-train track map $\phi_0: G \to G$, and a lift to the universal cover $\phi: \RelTTTree \to \RelTTTree$ representing $\oo$ where $T_G \in \relCV$. 

\begin{lemma}\label{L:5.3}
Suppose $\mathcal{Q}(X) = \mathcal{Q}(X^{\prime})$ for every generic leaf $\{X, X^{\p}\}$ of $\lamination$ and for every diagonal leaf $\{X, X^{\p}\}$ of $L_{\mathcal{Q}}(\UnstableTree)$.
Then there exist maps $i_p : \RelTTTree \to \overline{T}$, $p \in \mathbb{N}$ such that $i_p \circ \phi^p$ is $\free$-equivariant and $BCC(i_p) \to 0$ as $p \to \infty$.\end{lemma}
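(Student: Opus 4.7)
The plan is to adapt the construction of \cite[Proposition 5.3]{LL:NSDynamics} to the relative setting, using Lemma~\ref{L:5.2} as the crucial input to force the bounded cancellation constant to zero. Fix once and for all an $\free$-equivariant map $\mu: \RelTTTree \to \overline{T}$ which is linear on edges, produced by Proposition~\ref{P:SmallBBT}. Since the top stratum of $\phi_0: G \to G$ is EG with primitive transition matrix, after possibly passing to an iterate $\phi^p$ is surjective at the level of vertex $\free$-orbits of $\RelTTTree$. For each $\free$-orbit of vertices of $\RelTTTree$, choose a representative $v$, pick a preimage $\xi_v \in \RelTTTree$ with $\phi^p(\xi_v) = v$, and set $i_p(v) := \mu(\xi_v)$. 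Extend to the orbit by $i_p(\oo^p(g) \cdot v) := g \cdot \mu(\xi_v)$ for $g \in \free$; this is precisely $\oo^{-p}$-twisted equivariance for $i_p$, and hence makes $i_p \circ \phi^p$ $\free$-equivariant. Finally, extend $i_p$ linearly on each edge of $\RelTTTree$.

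The heart of the matter is the bound $BCC(i_p) \to 0$. For a tripod $[u, w, u']$ in $\RelTTTree$ with $w$ as its center, the cancellation in $i_p$ at $w$ equals, up to an additive $BCC(\mu)$, the distance from $i_p(w) = \mu(\xi_w)$ to the geodesic in $\overline{T}$ joining $\mu(\xi_u)$ and $\mu(\xi_{u'})$. Equivalently, one has to control the $\mu$-image of the preimage tripod in $\RelTTTree$. The key observation is that each edge at $w$ extends forward to a ray in $\RelTTTree$ whose endpoint in $\partial \free \setminus \partial \ffa$ lies in $s(\lamination) \cup E\lamination$: a half-leaf of $\lamination$ if the direction has trivial stabilizer, and an eigenray if the direction emanates from a vertex with non-trivial stabilizer.

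By Lemma~\ref{L:ConnectingEdges}, any two edge directions at $w$ can be connected by a finite chain of edges whose consecutive turns are crossed by either a $\lamination$-leaf or a diagonal leaf of $L_{\mathcal{Q}}(\UnstableTree)$. The hypothesis of Lemma~\ref{L:5.3} is exactly the input required to apply Lemma~\ref{L:5.2} to each step in this chain: the endpoints of the rays emanating from $w$ (which, pushed forward by $\phi^p$ and then composed with $\mu$, govern $i_p(w)$ and $i_p$ on the adjacent vertices) have $\mathcal{Q}$-values whose pairwise distances in $\overline{T}$ tend to zero uniformly as $p \to \infty$. Translating this from the $\mathcal{Q}$-picture back to the preimage picture under $\phi^p$ (using that $\xi_v$ is a preimage of $v$ and that $\mu$ satisfies $\mu(\xi) \approx \mathcal{Q}$ of the forward ray from $\xi$, up to $BCC(\mu)$), the tripod at $\mu(\xi_w)$ collapses, and summing cancellations over an edge-path yields $BCC(i_p) \to 0$.

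The main obstacle is the last translation step, linking the $\mu$-image of $\phi^p$-preimages of vertices to $\mathcal{Q}$-values of forward eigenrays and leaf rays. In the absolute case only $\lamination$-leaves appear at each vertex and the Whitehead graph is connected; in the relative setting a vertex with non-trivial stabilizer in $\RelTTTree$ may have directions that are only linked through diagonal leaves of $L_{\mathcal{Q}}(\UnstableTree)$, which is exactly why the hypothesis of Lemma~\ref{L:5.3} includes both types of leaves and why Lemma~\ref{L:ConnectingEdges} was proved with diagonal leaves available. Once this technical bookkeeping is carried out, the estimate is formally the same as in \cite[Proposition 5.3]{LL:NSDynamics}.
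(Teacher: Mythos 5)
Your construction of $i_p$ is genuinely different from the paper's, and the key estimate $\op{BCC}(i_p)\to 0$ does not go through for it. The paper sets $i_p(v)=\mathcal{Q}(\oo^{-p}(X_v))$ for an $\free$-equivariantly chosen eigenray $X_v\in E\Rlamination$ based at $v$ (after first collapsing so that every vertex of $\RelTTTree$ has non-trivial stabilizer); then "adjacent vertices have images at distance tending to $0$" is literally Lemma~\ref{L:5.2} applied to $\oo^{-1}$ with $Z=X_v$, $Z'=X_u$, and twisted equivariance is immediate from equivariance of $v\mapsto X_v$. Your $i_p(v)=\mu(\xi_v)$, for a single fixed $\mu$ and an arbitrary preimage $\xi_v\in\phi^{-p}(v)$, has two problems. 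First, $\xi_v$ and $\xi_u$ for adjacent $v,u$ are essentially unrelated points of $\RelTTTree$: $\phi^p$ is not locally invertible at vertices, the components of $\phi^{-p}$ of the various edges at $v$ emanate from different preimage points, and preimages of adjacent vertices can be far apart. Consequently the rays you attach to $\xi_v$ and $\xi_u$ do not issue from a common vertex, and Lemma~\ref{L:5.2} --- whose proof runs Lemma~\ref{L:ConnectingEdges} at a single vertex --- says nothing about that pair; nor is its conclusion uniform over $p$-dependent choices of $Z,Z'$ in the form you need. Second, and more fundamentally, $\mu$ is fixed once and for all, so $\op{BCC}(\mu)$ is a fixed positive constant entering every one of your estimates; even granting all the identifications you sketch, you only obtain $d(i_p(v),i_p(u))\le 4\op{BCC}(\mu)+o(1)$, which does not tend to $0$. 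In Lemma~\ref{L:5.2} the decay to $0$ comes from re-choosing, for each $\epsilon$, an auxiliary map $h:T_0\to T$ with $\op{BCC}(h)<\epsilon$ via Proposition~\ref{P:SmallBBT} and then taking $p$ large; no single fixed map from $\RelTTTree$ to $\overline{T}$ can play that role.

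The "translation step" you flag as the main obstacle is indeed where the argument breaks, and it is not mere bookkeeping. The approximation "$\mu(\xi)\approx\mathcal{Q}(\text{forward ray from }\xi)$ up to $\op{BCC}(\mu)$" is not what Proposition~\ref{P:Q map} gives: it places $\mu(\rho)$ in a $2\op{BCC}(\mu)$-ball around $\mathcal{Q}(X)$ only \emph{except for an initial part}, and $\mu(\xi)$ is exactly the image of the initial point. (For points on leaves this can be repaired using the hypothesis together with \cite[Remark 3.7]{LL:NSDynamics}, but the two objections above remain.) The repair is to abandon the preimage picture entirely and define $i_p(v)$ directly as a $\mathcal{Q}$-value of a backward iterate of an equivariantly assigned eigenray, as the paper does; then $g\circ i_p=i_p\circ\oo^p(g)$ is a one-line computation and $\op{BCC}(i_p)\to 0$ follows from the uniform conclusion of Lemma~\ref{L:5.2} applied to $\oo^{-1}$, edge by edge over the finitely many orbits of edges of $\RelTTTree$.
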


\begin{proof}
We can assume that there are no vertices  with trivial stabilizer in $\RelTTTree$. If there were some such vertices we could collapse a tree in $\RelTTTree / \free$ and factor through the quotient of $\RelTTTree$. For a representative $v$ of an orbit of vertices in $\RelTTTree$ fix an eigenray $X_v$ in $E\Rlamination$ such that $\mathcal{Q}(X_v) = \tilde{v}$, where $\tilde{v}$ is a point in $T$ whose stabilizer contains the stabilizer of $v$. Then $\free$-equivariantly assign an eigenray to every vertex in the orbit of $v$. In this way, assign an eigenray to each vertex of $\RelTTTree$. 

We will now define a map $i_p: \RelTTTree \to \overline{T}$ and show that $i_p(e) \to 0$ as $p \to \infty$ for every edge $e$ of $\RelTTTree$. For a vertex $v \in \RelTTTree$, set $i_p(v) = \mathcal{Q}(\oo^{-p}(X_v))$ and extend linearly on edges. Now for an edge $e$ of $\RelTTTree$ with end points $v, u$ we have, by applying Lemma~\ref{L:5.2} to $\oo^{-1}$, that distance between $i_p(v) = \mathcal{Q}(\oo^{-p}(X_v))$ and $i_p(u)=\mathcal{Q}(\oo^{-p}(X_u))$ goes to zero as  $p \to \infty$. Thus $i_p(e) \to 0$ which implies that $\op{BCC}(i_p) \to 0$. The map $i_p$ satisfies a twisted equivariance relation $g \circ i_p = i_p \circ \oo^p(g)$ for all $g \in \free$. 

Also $i_p \circ \phi^p$ is $\free$-equivariant. Indeed, $ g \circ i_p \circ \phi^{p} = i_p \circ \oo^{p}(g) \circ \phi^{p} =  i_p \circ \phi^p \circ g.$

\end{proof}
\begin{proof}[Proof of Proposition \ref{P:5.1}] Assume by contradiction that $\mathcal{Q}(X) = \mathcal{Q}(X^{\prime})$ for every generic leaf $\{ X, X^{\prime}\}$ of $\lamination$ and $\Rlamination$ and every diagonal leaf of $L_{\mathcal{Q}}(\UnstableTree)$ and $L_{\mathcal{Q}}(\StableTree)$. Let $e$ be an edge in $\RelTTTree$ and let $\gamma \in \lamination$ be a leaf that crosses $e$. Then $\phi^p(\gamma)$ is also a leaf of the lamination. By assumption, the end points of $\gamma$ map to the same point under the $\mathcal{Q}$ map. By Proposition \ref{P:Q map} and \cite[Remark 3.7]{LL:NSDynamics}, $(i_p \circ \phi^p) (\gamma)$ is contained in a ball of radius $2BCC(i_p \circ \phi^p)$ in $\overline{T}$. We have $BCC(i_p \circ \phi^p) \leq BCC(i_p)+ Lip(\phi^p)BCC(\phi^p)$. Since $\gamma$ is a leaf of $\lamination$, $\phi^p$ restricted to $\gamma$ has no cancellation thus we get that $(i_p \circ \phi^p) (\gamma)$ is in fact contained in a ball of radius $2BCC(i_p)$ in $\overline{T}$.  
Thus the diameter of $(i_p\circ \phi^p)(e)$ in $\overline{T}$ is bounded by $4BCC(i_p)$. 

Now let $u$ be a conjugacy class, represented by a loop of edge-length $k$ in $G=\RelTTTree / \free$. Since $i_p \circ \phi^p$ is $\free$-equivariant, the translation length of $u$ in $T$ is bounded by $4kBCC(i_p)$. Since $BCC(i_p) \to 0$ as $p \to \infty$, we get that every $u$ has zero translation length in $T$ which is a contradiction. 
\end{proof}

\subsection{Proof of Theorem C}\label{subsec:NS for trees proof}
We will now put together the results from  Section~\ref{subsec:Relative Wh graph} and Section~\ref{subsec:Q map} to prove the following lemma, which shows that the conditions mentioned in Proposition~\ref{P:Proof Strategy} are satisfied by all trees in $\relCVClo$ if we allow $\gamma$ to be a leaf of $\lamination$ or $\Rlamination$. 

\begin{lemma}\label{L:Tcases}
Let $T \in \relCVClo$. Then there exists a tree $T_0 \in \relCV$, an equivariant map $h : T_0 \to T$, and a bi-infinite geodesic $\gamma_0 \subset T_0$ representing a generic leaf $\gamma$ of $\lamination$ or $\Rlamination$ such that $h(\gamma_0)$ has diameter greater than 2BCC($h$). \end{lemma}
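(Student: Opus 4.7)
The plan is to split $T \in \relCVClo$ into two cases — $T$ has dense orbits, or $T$ is simplicial — following the Levitt--Lustig strategy \cite{LL:NSDynamics} in the absolute setting. Mixed-type trees (with both simplicial pieces and indecomposable pieces) will reduce to these two via a transverse decomposition, handled by applying the appropriate case to the relevant piece supporting $L(T)$.

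\textbf{Case 1: $T$ has dense orbits and trivial arc stabilizers.} I invoke Proposition~\ref{P:5.1} to obtain a generic leaf $\gamma = \{X, X'\}$ of $\lamination$ or $\Rlamination$ with $\mathcal{Q}(X) \neq \mathcal{Q}(X')$; the diagonal-leaf alternative reduces to this by the remark following Proposition~\ref{P:5.1}. Set $d := d_{\overline{T}}(\mathcal{Q}(X), \mathcal{Q}(X')) > 0$, and by Proposition~\ref{P:SmallBBT} fix an equivariant $h : T_0 \to T$ with $T_0 \in \relCV$, isometric on each edge, and $BCC(h) < d/6$. For a bi-infinite realization $\gamma_0$ of $\gamma$ in $T_0$, Proposition~\ref{P:Q map} places the two ends of $h(\gamma_0)$ in $2BCC(h)$-balls around $\mathcal{Q}(X)$ and $\mathcal{Q}(X')$ respectively, so the diameter of $h(\gamma_0)$ is at least $d - 4BCC(h) > 2BCC(h)$.

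\textbf{Case 2: $T$ is simplicial.} Take $T_0 = T_G$ and let $h : T_G \to T$ be an $\free$-equivariant PL map. Suppose for contradiction that for every generic leaf $\gamma$ of $\lamination$ and of $\Rlamination$ the image $h(\gamma_0)$ has diameter at most $2BCC(h)$. The strategy is to extract a finitely generated proper subgroup $H \leq \free$ that respects $\ffa$ (each peripheral subgroup either fixes its vertex in $T$ or meets $H$ trivially) and carries $\lamination$. By Lemma~\ref{L:WhGraph}(c) such $H$ would be of finite index, contradicting the non-triviality of the simplicial $(\free, \ffa)$-tree $T$. To produce $H$: since $T$ is non-trivial, some edge $e$ of $T_G$ has non-degenerate $h$-image, and by connectivity of the relative Whitehead graph (Lemma~\ref{L:WhGraph}(a)) together with Lemma~\ref{L:BFH lamination} some leaf of $\lamination$ crosses $e$. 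The iterates $\phi^p(e)$ have $T_G$-length $\PFevalue^p |e|$, so if their $h$-images remain in a bounded locus of $T$ then the bounded cancellation property forces deck translations realigning these iterates to stabilize a proper sub-$(\free,\ffa)$-tree of $T$; the stabilizer of this subtree, which is finitely generated since $T$ is very small, furnishes the desired $H$.

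\textbf{The main obstacle} is Case 2. In the absolute setting, connectivity of the classical Whitehead graph at every vertex of the train track graph makes the carrying argument routine. In the relative case, Example~\ref{E:Disconnected Relative Wh Graph} shows the classical Whitehead graph can be disconnected at a vertex with non-trivial stabilizer, which is precisely why the relative Whitehead graph of Section~\ref{subsec:Relative Wh graph} — incorporating identifications of peripheral directions and implicitly allowing diagonal leaves built from eigenrays that may not themselves be leaves of $\lamination$ — is required. Carefully extracting $H$ in Case 2 while respecting both the simplicial collapse performed by $h$ and the peripheral identifications built into the relative Whitehead graph is the crux of the proof.
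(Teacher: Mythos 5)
Your proposal matches the paper's proof in all essentials: the dense-orbits case is handled exactly as you describe via Proposition~\ref{P:5.1}, Proposition~\ref{P:SmallBBT} and the $\mathcal{Q}$-map (with the eigenray/diagonal alternative resolved by taking a generic leaf crossing a long subsegment of the eigenray), and the simplicial case via Lemma~\ref{L:WhGraph}(c) applied to a finitely generated, infinite-index subgroup carrying a bounded generic leaf. The paper streamlines your two rough spots: for trees that are neither simplicial nor of dense orbits it collapses onto the dense-orbit part and notes that unboundedness there (via Proposition~\ref{P:Proof Strategy}(a)) gives unboundedness in $T$, and in the simplicial case it takes the carrier to be a vertex stabilizer of $T$ directly---finitely generated and of infinite index by \cite{GL:RankOfActions}, and meeting each peripheral factor trivially or fully---rather than constructing it from realigned iterates.
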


\begin{proof}
 There are three cases to consider for a tree $T$ in $\relCVClo$. 
\begin{itemize}
\item \emph{$T$ has dense orbits} (which implies that arc stabilizers are trivial by \cite[Lemma 4.2]{LL:NSDynamics}): Proposition~\ref{P:5.1} provides either a generic leaf $\{X, X^{\prime} \}$ in $\lamination$ or $\Rlamination$ with $\mathcal{Q}(X) \neq \mathcal{Q}(X^{\prime})$, or it provides an eigenray $X_v \in E\lamination$ or $E\Rlamination$ based at a vertex $v$ of $\RelTTTree$ such that $\mathcal{Q}(X_v) \neq \tilde{v}$, where $\tilde{v}$ is the vertex of $T$ containing the stabilizer of $v$. We can choose $h: T_0 \to T$ with $2BCC(h)< d(\mathcal{Q}(X), \mathcal{Q}(X^{\prime}))$ or $2BCC(h) < d(\mathcal{Q}(X_v), \tilde{v})$ using Proposition~\ref{P:SmallBBT}. In the first case, we let $\gamma_0$ be the geodesic joining end points corresponding to $X, X^{\prime}$ in $T_0$.  In the second case, there exists a subsegment of an eigenray $R_v$ corresponding to $X_v$ whose diameter in $T$ is at least $d_T(\mathcal{Q}(X_v), \tilde{v})$. We choose $\gamma_0$ to be any generic leaf (of either $\lamination$ or $\Rlamination$) crossing that subsegment. 

\item \emph{$T$ does not have dense orbits and is also not simplicial:} then $T$ contains simplicial parts and also subtrees $T_v$ with the property that some subgroup $G_v \subset \free$ acts with dense orbits on $T_v$. Let $\pi : T \to T^{\prime}$ be a collapse map such that $T^{\prime}$ has dense orbits. Choose $\gamma_0$ as in the previous case, using $T^{\prime}$. Then by Proposition~\ref{P:Proof Strategy} $\gamma_0$ is unbounded in $T^{\p}$ and hence it is $T$-unbounded. The map $h : T_0 \to T$ may be chosen arbitrarily.  
\item \emph{$T$ is simplicial:} we want to show that a generic leaf of $\lamination$ is unbounded in $T$. 
We need to show that a tail of a generic leaf of $\lamination$ or $\Rlamination$ does not live in $\partial B$ for any vertex stabilizer $B$. By \cite[Corollary III.4]{GL:RankOfActions} vertex stabilizer in a tree in $\overline{CV}_n$ is finitely generated and has infinite index in $\free$.  Also given $T$ in $\relCVClo$, for every $[A] \in \ffa$ a vertex stabilizer in $T$ either contains the full free factor $A$ or intersects it trivially. 
By Lemma~\ref{L:WhGraph}, a generic leaf of the attracting lamination cannot be carried by a vertex stabilizer of $T$, therefore it is unbounded in $T$. One can choose $h : T_0 \to T$ arbitrarily.  \qedhere
\end{itemize}      
\end{proof}

\begin{thmC}  
 Let $\ffa$ be a non-trivial free factor system of $\free$ such that $\zeta(\ffa) \geq 3$. Let $\oo \in \relOut$ be fully irreducible relative to $\mathcal{A}$. Then $\oo$ acts on $\relCVClo$ with uniform north-south dynamics: there are two fixed points $\StableTree$ and $\UnstableTree$ and for every compact set $K$ of $\relCVClo$ that does not contain $\UnstableTree$ (resp. $\StableTree$) and for every open neighborhood $U$ (resp. $V$) of $\StableTree$ (resp. $\UnstableTree$), there exists an $N\geq 1$ such that for all $n \geq N$ we have $K.\oo^n \subseteq U$ (resp. $K.\oo^{-n} \subseteq V$) 
\end{thmC}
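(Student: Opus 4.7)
The proof is the standard deduction of uniform north--south dynamics from the two preceding results---Proposition~\ref{P:Proof Strategy} (local forward convergence to $\StableTree$ whenever some $\lamination$-leaf is unbounded in $T$) and Lemma~\ref{L:Tcases} (such a leaf, of $\lamination$ or of $\Rlamination$, exists at every $T\in\relCVClo$)---organized by compactness in the spirit of Levitt and Lustig's argument for the absolute case \cite{LL:NSDynamics}. First I would cover $\relCVClo$ by two open basins: for each $T$, Lemma~\ref{L:Tcases} supplies an unbounded leaf of $\lamination$ or of $\Rlamination$, and Proposition~\ref{P:Proof Strategy}(b) (applied to $\oo$, or to $\oo^{-1}$ whose attracting lamination is $\Rlamination$ and whose stable tree is $\UnstableTree$) produces an open neighborhood $V^+(T)$ on which $\oo^p$ converges uniformly to $\StableTree$, or $V^-(T)$ on which $\oo^{-p}$ converges uniformly to $\UnstableTree$. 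Let $\mathcal{U}^\pm$ denote the unions. Because $\PFevalue>1$ the projective classes $\StableTree$ and $\UnstableTree$ are distinct, and any $\oo$-fixed tree $T\in\mathcal{U}^+$ satisfies $T=\oo^p T\to\StableTree$, forcing $T=\StableTree$ (and dually for $\mathcal{U}^-$); so $\{\StableTree,\UnstableTree\}$ is exactly the $\oo$-fixed set on $\relCVClo$, with $\UnstableTree\in\mathcal{U}^-\setminus\mathcal{U}^+$ and $\StableTree\in\mathcal{U}^+\setminus\mathcal{U}^-$.

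The central step is pointwise forward convergence: $\oo^p T\to\StableTree$ for every $T\neq\UnstableTree$. Let $\Omega(T)$ be the $\omega$-limit set of $T$ under $\oo$; by compactness of $\relCVClo$ and continuity of the $\oo$-action it is nonempty, closed, and $\oo^{\pm 1}$-invariant. If some $T_\infty\in\Omega(T)\cap\mathcal{U}^+$ exists, then $\oo^{p_k}T\in V^+(T_\infty)$ for some large $p_k$, and the uniform convergence on $V^+(T_\infty)$ into any prescribed open neighborhood $U$ of $\StableTree$ forces $\oo^p T\in U$ for all $p\geq p_k+N$. Otherwise $\Omega(T)\subseteq\mathcal{U}^-$; a finite subcover of the compact set $\Omega(T)$ by basins $V^-$ yields a uniform threshold with $\oo^{-p}\Omega(T)\subseteq B_\epsilon(\UnstableTree)$ for $p\geq N$, and the invariance $\oo^{-p}\Omega(T)=\Omega(T)$ collapses $\Omega(T)$ to $\{\UnstableTree\}$, so $\oo^p T\to\UnstableTree$. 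To exclude the latter when $T\neq\UnstableTree$, pick an open neighborhood $W$ of $\UnstableTree$ contained in some basin $V^-$; uniform $\oo^{-1}$-convergence on $W$ gives $\bigcap_{n\geq 0}\oo^{-n}(W)=\{\UnstableTree\}$, while $\oo^p T\to\UnstableTree$ forces $\oo^n(\oo^N T)\in W$ for every $n\geq 0$, hence $\oo^N T=\UnstableTree$ and so $T=\UnstableTree$, a contradiction. Extracting this repelling character of $\UnstableTree$ from the uniform $\oo^{-1}$-contraction of $\mathcal{U}^-$ is where the main technical care is required.

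Once pointwise convergence is established, the uniform statement follows by a routine compactness argument. Given a compact $K\subseteq\relCVClo$ with $\UnstableTree\notin K$ and an open neighborhood $U$ of $\StableTree$, the pointwise convergence places $K\subseteq\mathcal{U}^+$; a finite subcover $V^+(T_1),\ldots,V^+(T_m)$ of $K$ with individual thresholds $N_i$ for inclusion into $U$ yields $N=\max_i N_i$ with $\oo^n(K)\subseteq U$ for $n\geq N$. The symmetric statement for $\oo^{-n}$ follows by applying the entire argument to $\oo^{-1}$.
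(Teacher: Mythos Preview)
Your proof is correct and follows essentially the same route as the paper: combine Lemma~\ref{L:Tcases} and Proposition~\ref{P:Proof Strategy} to get local uniform convergence basins, use the repelling character of $\UnstableTree$ to upgrade the dichotomy to pointwise forward convergence for every $T\neq\UnstableTree$, and then finish by a finite-subcover argument. The paper's proof is more terse---it asserts in one line that the forward $\omega$-limit set of $T\neq\UnstableTree$ cannot contain the repelling point $\UnstableTree$, and it also cites \cite{HK:UniformNS} as an alternative way to pass from pointwise to uniform---whereas you spell out the $\omega$-limit set analysis in detail; but the underlying argument is the same.

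One small elision worth tightening: in your final paragraph you write ``the pointwise convergence places $K\subseteq\mathcal{U}^+$'', but as you defined it, $\mathcal{U}^+$ consists only of the $V^+(T)$ basins coming directly from an unbounded $\lamination$-leaf, and pointwise convergence to $\StableTree$ does not immediately say $T$ has such a leaf. The fix is implicit in your own pointwise step: since $\StableTree\in\mathcal{U}^+$ (by Lemma~\ref{L:DualLamination} a generic $\lamination$-leaf is unbounded in $\StableTree$), once $\oo^{p_0}T$ lands in some $V^+(\StableTree)$ you pull back by continuity of $\oo^{p_0}$ to a neighborhood of $T$ that also converges uniformly. The paper handles this the same way.
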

\begin{proof}
By Lemma~\ref{L:Tcases} and Proposition~\ref{P:Proof Strategy}, every $T$ in $\relCVClo$ converges either to $\StableTree$ under forward iterates or to $\UnstableTree$ under backward iterates. We know that $\StableTree$ is locally attracting and $\UnstableTree$ is locally repelling. Thus given a tree $T \neq \UnstableTree$, the set of its limit points under forward iterates cannot contain the repelling point $\UnstableTree$ and hence $T$ converges to $\StableTree$. Similarly, a tree $T \neq \StableTree$ under backward iterates converges to $\UnstableTree$. By Proposition~\ref{P:Proof Strategy}, for every $T \neq \UnstableTree$ there exists a neighborhood $V_T$ of $T$ such that $V_T$ uniformly converges to $\StableTree$ under $\oo$. Let $\mathcal{V}$ be an open cover of $\relCVClo \setminus \UnstableTree$ by open sets of the form $V_T$. Let $K$ be a compact set in $\relCVClo \setminus \UnstableTree$. Then $K \cap \mathcal{V}$ is an open cover of $K$ which has a finite subcover by sets of the form $V_T$. Thus $K$ uniformly converges to $\StableTree$ under $\oo$. Alternatively, since $\relCVClo$ is compact, by \cite{HK:UniformNS}, pointwise north-south dynamics implies uniform north-south dynamics.  
\end{proof}

\section{Intersection Form}\label{sec:IntersectionForm}
In \cite{KL:IntersectionForm}, Kapovich and Lustig established an intersection form between $\overline{cv}_n$, the closure of unprojectivized outer space and $\mathcal{MC}(\free)$, the space of measured currents. The precise statement is as follows: 
\begin{theorem}[{\cite[Theorem A]{KL:IntersectionForm}}]
There is a unique $\op{Out}(\free)$-invariant, continuous length pairing that is $\mathbb{R}_{\geq 0}$ homogeneous in the first coordinate and $\mathbb{R}_{\geq 0}$ linear in the second coordinate. 
$$ \la \cdot, \cdot \ra: \overline{cv}_n \times \mathcal{MC}(\free) \to \mathbb{R}_{\geq 0}$$
Further, $\la T, \eta_g\ra = l_T(g)$ for all $T \in \overline{cv_n}$ and all rational currents $\eta_g$ where $g \in \free \setminus \{1\}$. 
\end{theorem}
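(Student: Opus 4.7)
The plan is to establish uniqueness first, then existence. For \textbf{uniqueness}, note that positive multiples of rational currents $\{c\eta_g : c \geq 0,\ g \in \free\setminus\{1\}\}$ are dense in $\mathcal{MC}(\free)$, and $cv_n$ is dense in $\overline{cv}_n$. Continuity of the pairing, combined with $\mathbb{R}_{\geq 0}$-homogeneity in the first coordinate and $\mathbb{R}_{\geq 0}$-linearity in the second, then forces any two such pairings that agree on the prescribed values $\la T, \eta_g\ra = l_T(g)$ to coincide on a dense subset of $\overline{cv}_n \times \mathcal{MC}(\free)$ and hence everywhere.

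For \textbf{existence}, I would first construct the pairing on $cv_n \times \mathcal{MC}(\free)$ via a combinatorial formula. Fix a basis $\mathfrak{B}$ of $\free$; given $T \in cv_n$ there exists an integer $N$ (depending on $T$ and $\mathfrak{B}$) and a nonnegative function $\ell_T$ on words of length $N$ in $\mathfrak{B}^{\pm}$ such that
\[
l_T(g) \;=\; \sum_{|w|=N} \ell_T(w)\,\eta_g(C_w)
\]
for every nontrivial $g \in \free$, obtained by summing edge-length contributions, with cancellation accounted for at turns, along the tightened cyclic loop representing $g$. Setting
\[
\la T, \eta\ra \;:=\; \sum_{|w|=N} \ell_T(w)\,\eta(C_w)
\]
for arbitrary $\eta \in \mathcal{MC}(\free)$ defines a functional that is linear and nonnegative in $\eta$, agrees with translation length on rational currents, is independent of $N$ (by refining cylinders and using the cocycle $\eta(C_w) = \sum_a \eta(C_{wa})$), and is independent of $\mathfrak{B}$ (by a Whitehead/change-of-basis argument applied to $\ell_T$). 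Joint continuity on $cv_n \times \mathcal{MC}(\free)$ follows because $\ell_T(w)$ depends continuously on $T \in cv_n$ for fixed $w$.

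The main obstacle is extending this to $\overline{cv}_n$ continuously. For $T \in \overline{cv}_n$, pick $T_k \in cv_n$ with $T_k \to T$, and set $\la T, \eta\ra := \lim_k \la T_k, \eta\ra$; one must prove this limit exists, is independent of the approximating sequence, and gives a jointly continuous function on $\overline{cv}_n \times \mathcal{MC}(\free)$. The uniform control needed comes from bounded cancellation estimates for equivariant Lipschitz maps $T_k \to T_l$: after a telescoping sum, $|\la T_k, \eta\ra - \la T_l, \eta\ra|$ is bounded in terms of the Lipschitz distortion between $T_k$ and $T_l$ and the total $\eta$-mass of cylinders of a controlled length. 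The hardest case is when $T$ has dense orbits or non-trivial arc stabilizers, so that the simplicial combinatorial formula degenerates; here I would recast $\la T, \eta\ra$ as the integral, against the $\free$-invariant measure $\eta$, of a length cocycle on $\partial^2\free$ constructed from the $\mathcal{Q}$-map machinery of Section~\ref{subsec:Q map}, and prove that this cocycle varies continuously as $T$ varies in $\overline{cv}_n$. This integral viewpoint also makes transparent the zero-pairing criterion of \cite{KL:ZeroLength} that is invoked in Section~\ref{sec:IntersectionForm}.
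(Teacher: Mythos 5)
This statement is not proved in the paper at all: it is quoted verbatim from Kapovich--Lustig \cite{KL:IntersectionForm} as background for Section~\ref{sec:IntersectionForm}, so there is no in-paper argument to compare yours against. Judged on its own terms, your uniqueness argument is correct and is the standard one (density of nonnegative multiples of rational currents in $\mathcal{MC}(\free)$, plus the prescribed values $\la T,\eta_g\ra = l_T(g)$, which the theorem asserts for \emph{all} $T\in\overline{cv}_n$, pin the pairing down by continuity). Your construction on $cv_n\times\mathcal{MC}(\free)$ is also essentially the known one, although it is cleaner to sum over cylinders defined by the edges of $T/\free$ itself rather than over length-$N$ words in a fixed basis, which makes nonnegativity of the weights and basis-independence immediate.

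The genuine gap is the extension to $\overline{cv}_n$, which is precisely the content of the Kapovich--Lustig paper and which you only gesture at. The specific mechanism you propose does not work as stated: controlling $|\la T_k,\eta\ra - \la T_l,\eta\ra|$ by ``Lipschitz distortion between $T_k$ and $T_l$'' fails exactly where it is needed, because as $T_k\to T\in\partial cv_n$ the optimal Lipschitz constants of equivariant maps in at least one direction blow up (covolume tends to zero, orbits become dense, arc stabilizers appear), so the telescoping bound degenerates; and defining $\la T,\eta\ra:=\lim_k\la T_k,\eta\ra$ presupposes sequence-independence, which is essentially the continuity one is trying to prove. That this step is delicate rather than routine is underscored by Section~\ref{sec:IntersectionForm} of the present paper itself: Example~\ref{E:CounterExample} and \cite[Theorem 6]{GH:LaminationsFreeProducts} show that the directly analogous pairing in the relative setting admits \emph{no} continuous extension, which is why the paper replaces the length pairing by the coarse $\{0,1\}$-valued form $I$ built on the zero-pairing criterion. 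Your closing suggestion to recast the pairing as an integral of a length cocycle via the $\mathcal{Q}$-map is the right kind of idea for trees with dense orbits, but constructing that cocycle and proving it varies continuously in $T$ is the theorem, not a remark; as written, the proposal would need the full Kapovich--Lustig analysis inserted at this point.
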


Kapovich and Lustig also give the following characterization of zero pairing:

\begin{proposition}[{\cite[Theorem 1.1]{KL:ZeroLength}}] Let $T \in \overline{cv}_n$, and let $\eta \in \mathcal{MC}(\free)$. Then $\la T, \eta \ra = 0$ if and only if $\op{supp}(\eta) \subseteq L(T)$, where $L(T)$ is the dual lamination of $T$ and $\op{supp}(\eta)$ is the support of $\eta$ in $\partial^2 \free$. \end{proposition}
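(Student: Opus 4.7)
The plan is to prove both directions by exploiting the continuous extension characterization of the pairing: $\la T, \eta\ra$ is the unique continuous, $\mathbb{R}_{\geq 0}$-bilinear extension of the map $(T,\eta_g)\mapsto l_T(g)$ from rational currents, together with the presentation $L(T)=\bigcap_{\epsilon>0} L_\epsilon(T)$ where $L_\epsilon(T)=\overline{\{(g^{-\infty},g^{\infty}):l_T(g)<\epsilon\}}$. Density of rational currents in $\mathcal{MC}(\free)$ (Bonahon's theorem, in the free-group formulation) will be the background fact I invoke repeatedly.

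For the ``if'' direction, assume $\op{supp}(\eta)\subseteq L(T)$ and fix $\epsilon>0$; I would show $\la T, \eta\ra \leq C\epsilon$ for a constant $C$ depending only on $\eta$. The hypothesis forces $\op{supp}(\eta)\subseteq L_\epsilon(T)$, so every $(X,X')\in\op{supp}(\eta)$ is a limit of axes $(g_i^{-\infty},g_i^{\infty})$ with $l_T(g_i)<\epsilon$. Using a partition of unity on cylinder neighborhoods covering $\op{supp}(\eta)$ inside the open set where short axes are dense, I would build a weak-$\ast$ approximating sequence of positive combinations $\sum c_i^{(n)} \eta_{g_i^{(n)}}$ with $l_T(g_i^{(n)})<\epsilon$ and $\sum c_i^{(n)}$ bounded in terms of the total mass of $\eta$. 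Linearity then gives the bound $\la T,\sum c_i^{(n)}\eta_{g_i^{(n)}}\ra<\epsilon\sum c_i^{(n)}$, and continuity of the pairing together with $\epsilon\to 0$ forces $\la T,\eta\ra=0$.

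For the ``only if'' direction I argue contrapositively. Suppose there exists $(X,X')\in\op{supp}(\eta)\setminus L(T)$. Then there is some $\epsilon_0>0$ and a cylinder neighborhood $C\ni(X,X')$ disjoint from $L_{\epsilon_0}(T)$, and by definition of support $\eta(C)>0$. Disjointness from $L_{\epsilon_0}(T)$ implies that for every $g\in\free$ whose conjugacy class has axis $\{g^{-\infty},g^{\infty}\}\in C$, one has $l_T(g)\geq \epsilon_0$. Approximating $\eta\chi_C$ from below by rational currents whose associated axes lie in $C$ and using the bilinear lower estimate $\la T, c\,\eta_g\ra = c\,l_T(g)\geq c\epsilon_0$ on each piece, one reads off $\la T,\eta\ra\geq \epsilon_0\,\eta(C)>0$, a contradiction.

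The main obstacle is the \emph{local-to-global} step: showing that rational currents whose axes lie in a prescribed open cylinder set are weak-$\ast$ dense among all currents supported on that cylinder, with coefficient control, and showing the pairing behaves additively enough with respect to cylinder decompositions for the lower bound $\la T,\eta\ra\geq\epsilon_0\eta(C)$ to be extracted from purely local information. Both facts amount to refined versions of Bonahon-type approximation adapted to the free group; the subtlety is compounded for $T\in\partial cv_n$ with dense orbits, where the geometry of $T$ does not give an obvious combinatorial formula for the pairing. This is precisely the technical heart of \cite{KL:ZeroLength}, and once in place the proof structure above essentially writes itself.
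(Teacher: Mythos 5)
First, note that the paper does not prove this proposition at all: it is imported verbatim from Kapovich--Lustig \cite{KL:ZeroLength} (Theorem 1.1 there), so there is no in-paper argument to compare against, and your proposal must stand on its own as a proof of the cited theorem.

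As it stands it does not. Both directions are reduced to approximation claims that are not established and are essentially equivalent in depth to the theorem itself. For the ``if'' direction, the hypothesis $\op{supp}(\eta)\subseteq L_\epsilon(T)$ is a statement about the \emph{set} of leaves: each leaf is a limit in $\partial^2\free$ of axes of elements with $l_T(g)<\epsilon$. You need the much stronger \emph{measure-level} statement that $\eta$ itself lies in the closed cone generated by the counting currents $\eta_g$ of such short elements, with total coefficient controlled by the mass of $\eta$. Density of rational currents in $\mathcal{MC}(\free)$ gives approximation by \emph{some} rational currents with no control on $l_T(g_i)$, and no partition-of-unity argument on cylinders produces the required control, because a counting current $\eta_g$ is not localizable: it charges every cylinder its axis or any translate passes through, while $\la T,\eta_g\ra=l_T(g)$ is a global quantity. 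The ``only if'' direction has the same defect in sharper form: $\eta\chi_C$ is not $\free$-invariant, hence not a current, and the set of rational currents whose axis point $(g^{-\infty},g^{\infty})$ lies in a fixed cylinder $C$ is far too small to approximate anything from below; the inequality $\la T,\eta\ra\geq\epsilon_0\,\eta(C)$ presupposes an additivity of the pairing over cylinder decompositions that only exists for simplicial $T$ (where $\la T,\eta\ra=\sum_e l(e)\,\eta(C_e)$) and fails to be available precisely in the hard case of $T\in\partial cv_n$ with dense orbits. The actual Kapovich--Lustig proof does not follow your outline: it runs through the explicit edge-sum formula for simplicial trees, bounded back-tracking, the Levitt decomposition of $T$ into simplicial and dense-orbit pieces, and the $\mathcal{Q}$-map and dual-lamination theory of Coulbois--Hilion--Lustig for the dense-orbit pieces. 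Flagging the missing steps as ``the technical heart'' is accurate, but it means the proposal is a plausible reduction, not a proof.
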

  
In this section we want to define an intersection form for $\RelCVClo$, the closure of relative outer space and $\rc$, the space of relative currents. If $T \in \RelCVClo$ and  $\eta_{\alpha} \in \rc$ is a rational relative current then we can define $\la T, \eta_{\alpha} \ra:= l_T(\alpha)$ as in the absolute case. But unfortunately this length pairing is not continuous. The following example was shown to us by Camille Horbez. 

\begin{example}\label{E:CounterExample}Let $F_2 =\langle a,b\rangle$ with $\ffa = \{[\la a \ra]\}$. Let $T_k \in \RelCV$ be a simplicial tree such that $\Gamma_k = T_k/\free$ is a graph with two vertices joined by an edge and there is a loop at one of the vertices. Let $\langle a \rangle$ be the stabilizer of the vertex away from the loop. The graph $\Gamma_k$ is marked such that the loop is labeled by $a^kb$. Let the loop and the edge have length 1. The limit of the sequence of trees $T_k$ is the Bass-Serre tree of an HNN extension whose vertex stabilizer is given by $\langle a \rangle$ and it has a length 3 loop labeled $b$. Next consider a sequence of relative currents $\eta_k = \eta_{a^kb}$ converging to $\eta_{\infty}$, which is given by $\eta_{\infty}(a^nba^m) = 1$ for all $n,m \geq 0$ and $\eta_{\infty}(w) = 0$ for all other $w \in \free \setminus \ffa$. We have that $\la T_k, \eta_k\ra = 1$ and $\la T_k, \eta_{k+1}\ra = 3$ for all $k$. For continuity of the pairing, we would need $\la T_k, \eta_k \ra$ and $\la T_k, \eta_{k+1}\ra$ to converge to $\la T, \eta_{\infty}\ra$ but the limit doesn't exist in this example.
\end{example}

In fact, in \cite[Theorem 6]{GH:LaminationsFreeProducts}, Guirardel and Horbez show that there is no continuous pairing between the closure of relative outer space and the space of relative currents (their notion of relative currents is slightly different than ours). For the current purposes, in Section~\ref{subsec:IntersectionForm} we will define a pairing for $\relCVClo$ and $\prc$ along the lines of zero pairing criterion of Kapovich and Lustig and show that it has `enough continuity' (see Lemma~\ref{L:ContinuityAtZero}) for our application. Before we give the pairing map, we prove some lemmas about the dual lamination of trees and the support of relative currents. 
\vspace{.3cm}\\
Recall Notation~\ref{N:maps} and Definition~\ref{D: A TrainTrack} for the next two sections. 
\subsection{Dual lamination of a tree} The key results from this section which will be used later are Lemma~\ref{L:DualLamination}, Proposition~\ref{P:LaminationLimit} and Lemma~\ref{L:LaminationLimit}. Recall the definition of dual lamination of a tree from Section~\ref{subsec:Laminations}.

\begin{lemma}\label{L:DualLamination}
$\MPlaminations \subseteq L(\PMTrees)$ , $\PMlaminations \nsubseteq L(\PMTrees)$.    
\end{lemma}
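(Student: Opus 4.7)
The lemma combines two statements: the inclusion $\MPlaminations \subseteq L(\PMTrees)$ and the non-inclusion $\PMlaminations \nsubseteq L(\PMTrees)$. Both are symmetric in $\oo$ and $\oo^{-1}$, so my plan is to prove $\Rlamination \subseteq L(\StableTree)$ and $\lamination \nsubseteq L(\StableTree)$ and invoke symmetry for the other $\pm$ combination.

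For $\Rlamination \subseteq L(\StableTree)$ the plan is purely dynamical. Since $\Rlamination$ is the attracting lamination of $\oo^{-1}$, for any fixed non-peripheral conjugacy class $h \in \free$ the pairs $((\oo^{-n}(h))^{-\infty}, (\oo^{-n}(h))^{+\infty})$ accumulate, under the $\free$-action, on every leaf of $\Rlamination$ in $\partial^2 \free$. On the other hand the eigenvalue relation for the stable tree gives $l_{\StableTree}(\oo^{-n}(h)) = \PFevalue^{-n} l_{\StableTree}(h) \to 0$, so every leaf of $\Rlamination$ lies in $L_\epsilon(\StableTree)$ for every $\epsilon > 0$, and taking the intersection over $\epsilon$ yields the inclusion. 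This step is essentially bookkeeping and I do not expect any real difficulty.

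For $\lamination \nsubseteq L(\StableTree)$ the plan is to establish the lower bound
\[
 l_{\StableTree}(g) \;\geq\; L
\]
whenever the $\RelTTTree$-axis of a non-peripheral conjugacy class $g$ contains a $\lamination$-legal subsegment $\beta$ of length $L$. The relative train track property on the top EG stratum $H_r$ ensures that $[\phi^p(\beta)]$ is legal in $H_r$ of length exactly $\PFevalue^p L$, and this legal subpath survives inside the tightened axis of $\oo^p(g)$ apart from cancellation at each end; the key technical input is that this end cancellation is bounded by a constant depending only on $\phi$, independently of $p$ and $g$ (the standard no-growing-cancellation phenomenon for legal subpaths under iteration of an irreducible train track). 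Then $l_{\RelTTTree}(\oo^p(g)) \geq \PFevalue^p L - O(1)$, and dividing by $\PFevalue^p$ and passing to the limit gives the bound.

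With this in hand, suppose for contradiction that a generic leaf $\ell = (X^-, X^+)$ of $\lamination$ lies in $L(\StableTree)$. Then there exist conjugacy classes $g_n$ with $((g_n)^{-\infty}, (g_n)^{+\infty}) \to (X^-, X^+)$ in $\partial^2 \free$ and $l_{\StableTree}(g_n) \to 0$. Endpoint convergence in $\partial^2 \free$ forces the $g_n$-axes in $\RelTTTree$ to contain $\lamination$-legal subsegments of length $L_n \to \infty$ (longer and longer pieces of the realization of $\ell$), so the lower bound yields $l_{\StableTree}(g_n) \geq L_n \to \infty$, contradicting $l_{\StableTree}(g_n) \to 0$. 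The main obstacle is the uniform-in-$p$ cancellation bound on $[\phi^p(\beta)]$ at its endpoints: the absolute case is classical, but in the relative setting one must carefully track cancellation at the interface between the EG top stratum $H_r$ and the lower filtration realizing $\ffa$, and that is the step I expect will demand the most care in the write-up.
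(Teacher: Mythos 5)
Your proposal is correct and follows essentially the same route as the paper: the inclusion $\MPlaminations \subseteq L(\PMTrees)$ comes from the eigenvalue relation $l_{\StableTree}(\oo^{-m}(w)) = \PFevalue^{-m}\, l_{\StableTree}(w)$ together with $L(T) = \bigcap_{\epsilon>0} L_{\epsilon}(T)$ and the closedness of $L(\StableTree)$, while the non-inclusion comes from the growth of translation lengths under forward iteration --- your legal-segment lower bound is simply a fuller justification of the one-line growth assertion the paper gives. The only caveat is your claim that \emph{any} non-peripheral conjugacy class $h$ has $\oo^{-n}(h)$ accumulating on all of $\Rlamination$: in the relative setting classes carried by the non-attracting subgroup system of $\oo^{-1}$ need not be attracted, but the argument only requires one attracted class (the paper implicitly chooses $w$ with $l_{\StableTree}(w)=1$), so this is an overstatement rather than a gap.
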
\begin{proof}
We have $\displaystyle{\StableTree = \lim_{n \to \infty} \frac{\RelTTTree \phi^n}{\PFevalue^n}}$. Let $w$ be a non-trivial conjugacy class in $[\free \setminus \ffa]$. Assume $l_{\StableTree}(w)=1$. Let $g_m = \oo^{-m}(w)$. Then $l_{\StableTree}(g_m) = 1 / \PFevalue^m$ which implies $(g_m^{-\infty}, g_m^{\infty})$ is contained in $L_{1/ \PFevalue^m}(\StableTree)$. Thus $l_- = \lim_{m \to \infty}g_m$ is contained in $\displaystyle{L(\StableTree) = \bigcap_{m \to \infty}L_{1/ \PFevalue^m}(\StableTree)}$. Since $l_-$ is a generic leaf of $\Rlamination$ and $L(\StableTree)$ is a closed subset of $\partial^2 \free$ we conclude that $\Rlamination \subseteq L(\StableTree)$.  

Let $g_m = \oo^m(w)$ such that $g_m$ converges to a generic leaf $l_+ \in \lamination$. We have $l_{\StableTree}(g_m) = \PFevalue^m l_{\StableTree}(w)$ which grows as $m$ goes to infinity. Thus $l_+ \notin L(\StableTree).$ \end{proof}

\begin{lemma}
The stable and unstable trees $\PMTrees$ have dense orbits. \end{lemma}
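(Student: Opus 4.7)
By the symmetry $\UnstableTree = T^+_{\oo^{-1}}$, it suffices to prove the claim for $T := \StableTree$. The plan is to argue by contradiction: assuming $T$ does not have dense orbits, I will extract either a proper $\oo$-invariant free factor system strictly containing $\ffa$ or a containment $\Rlamination \subseteq \partial^2 \ffa$, and both options contradict the fully irreducible relative structure of $\oo$.

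First, from the defining limit $T = \lim_{p\to\infty} \RelTTTree \phi^p / \PFevalue^p$ one reads off $l_T(\oo(g)) = \PFevalue \cdot l_T(g)$ for every $g\in \free$, so $\oo$ acts on $T$ as a twisted-equivariant $\PFevalue$-homothety: there is a map $H : T \to T$ with $H(gx) = \oo(g)H(x)$ and $d_T(Hx,Hy) = \PFevalue \cdot d_T(x,y)$. Under the assumption that $T$ lacks dense orbits, apply the structure theorem for minimal very small $(\free,\ffa)$-trees of finite rank (a relativization of the Bestvina--Feighn decomposition theorem, implicit in Horbez's characterization of $\RelCVClo$) to canonically decompose $T$ into a simplicial skeleton with dense-orbit subtrees attached at certain vertices. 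At least one simplicial edge is non-degenerate, and only finitely many $\free$-orbits of simplicial edges occur. Equivariantly collapsing the dense-orbit subtrees produces a non-trivial minimal simplicial $(\free,\ffa)$-tree $\hat T$. By the canonicity of the decomposition, $\hat T$ is $\oo$-invariant, the homothety $H$ descends to a $\PFevalue$-homothety of $\hat T$, and the set $\mathcal{B}$ of conjugacy classes of non-trivial vertex stabilizers of $\hat T$ is an $\oo$-invariant free factor system containing $\ffa$.

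If $\mathcal{B}$ strictly contains $\ffa$, this directly contradicts full relative irreducibility. Otherwise $\mathcal{B} = \ffa$, so $\hat T$ is a Grushko $(\free,\ffa)$-tree. For such a tree every non-peripheral conjugacy class has translation length bounded below by the minimum edge length of $\hat T$, so $L(\hat T) = \partial^2 \ffa$. Since the collapse map $T \to \hat T$ is $1$-Lipschitz we have $l_{\hat T}(g) \le l_T(g)$, hence $L_\epsilon(T) \subseteq L_\epsilon(\hat T)$ for every $\epsilon > 0$, giving $L(T) \subseteq L(\hat T) = \partial^2 \ffa$. By Lemma~\ref{L:DualLamination}, $\Rlamination \subseteq L(\StableTree) = L(T)$, and so $\Rlamination \subseteq \partial^2 \ffa$. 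But this is impossible: $\Rlamination$ is the attracting lamination of $\oo^{-1}$, which is itself fully irreducible relative to $\ffa$, so its generic leaves are birecurrent and cross the top EG stratum of any relative train track representative of $\oo^{-1}$; in particular $\Rlamination \not\subseteq \partial^2 \ffa$.

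The main obstacle is invoking the canonical simplicial decomposition of $T$ in the $(\free,\ffa)$-setting, namely the relativization of the Bestvina--Feighn structure theorem for very small $\free$-trees, together with verifying that the canonicity of the decomposition renders the resulting vertex-stabilizer free factor system $\mathcal{B}$ invariant under the homothety $H$. Once this decomposition is in hand, the case analysis above closes the argument cleanly: one subcase contradicts full relative irreducibility through $\mathcal{B}$ itself, the other through the inclusion $L(T)\subseteq \partial^2\ffa$.
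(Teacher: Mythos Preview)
Your overall strategy matches the paper's: pass to the Levitt/Horbez graph-of-actions decomposition of $T$, extract a $\oo$-invariant simplicial $(\free,\ffa)$-tree $\hat T$, and from $\hat T$ produce a $\oo$-invariant proper free factor system properly containing $\ffa$. The gap is in how you extract that free factor system. You assert that the set $\mathcal{B}$ of conjugacy classes of non-trivial vertex stabilizers of $\hat T$ is a free factor system. This is false in general: $\hat T$ is only a \emph{very small} $(\free,\ffa)$-tree, so its edge stabilizers may be non-trivial (maximal cyclic, non-peripheral), and in that case vertex groups are not free factors of $\free$. Your dichotomy ``$\mathcal{B}\supsetneq\ffa$ versus $\mathcal{B}=\ffa$'' therefore presupposes something you have not established, and your Case~2 conclusion that $\hat T$ is Grushko is unjustified without first ruling out non-trivial edge stabilizers.

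The paper fixes exactly this by doing the case split on edge stabilizers of $\hat T$ rather than on vertex stabilizers. If some edge has trivial stabilizer, collapsing the other edges yields a genuine one-edge free splitting, hence a $\oo$-invariant free factor system properly containing $\ffa$. If all edge stabilizers are non-trivial (hence cyclic and non-peripheral), one invokes Shenitzer--Swarup: the smallest free factor system containing $\ffa$ together with a given edge stabilizer is proper and non-trivial, and since the (finite) set of conjugacy classes of edge stabilizers is $\oo$-periodic, this free factor system is $\oo$-invariant after a power. Your Case~2 lamination argument via $L(T)\subseteq L(\hat T)=\partial^2\ffa$ is correct once one knows $\hat T$ is Grushko, but that hypothesis is never reached without the edge-stabilizer analysis.
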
 \begin{proof}
By a result of \cite[Proposition 4.16]{H:HorbezBoundary} which is a relativization of Levitt's decomposition theorem for trees in $CV_n$ \cite{L:GraphOfActions} we have the following: if $\StableTree$ does not have dense orbits then $\StableTree$ splits uniquely as a graph of actions, all of whose vertex trees have dense orbits, such that the Bass–Serre tree $\mathcal{G}_{\StableTree}$ of the underlying graph of groups is very small (Section~\ref{subsec:RelativeOuterSpace}), and all its edges have positive length. Up to taking powers $\mathcal{G}_{\StableTree}$ is $\oo$-invariant. If $\mathcal{G}_{\StableTree}$ has an edge with trivial stabilizer then by collapsing all other edges we get a $\oo$-invariant free factor system, which is a contradiction. If the edge stabilizers are non-trivial, then they are non-peripheral. Then by theorems of Shenitzer \cite{Shenitzer} and Swarup \cite{S:DecompositionFreeGroups} there is a smallest non-trivial free factor system containing the edge stabilizer and $\ffa$, which will have to be $\oo$-invariant. This is a contradiction.      
\end{proof}

Consider a sequence of trees $T_k$ in $\overline{CV}_n$ converging to a tree $T$. Then we can look at the sequence of laminations $L(T_k)$ and ask if its limit is equal to $L(T)$ or not. An example in \cite[Section 9]{CHL:DualLaminationsII} shows that $L_{\infty} = \lim_{n \to \infty} L(T_n)$ may not be equal to $L(T)$. We record another example here. 


\begin{example}\label{E:Limit of laminations}
Let $\free = \la a, b \ra$ be the free group of rank two. Let $T_k$ be a simplicial $\free$-tree given as follows: it is the universal cover of the one edge free splitting with vertex stabilizers given by $\la a^k b\ra$ and $\la a \ra$. The sequence $T_k$ converges to a tree $T$ which is the Bass-Serre tree of the HNN extension with vertex group $\la a \ra$ and edge labeled $b$. The algebraic lamination $L(T_k)$ is the set of periodic lines determined by $a$ and $a^kb$ which converges to periodic lines determined by $a$, denoted $\ldots aaaa \ldots$, and lines of the form $\ldots aaaa \,b \, aaaa \ldots$. On the other hand, $L(T)$ is given by the periodic lines determined by $a$. We see that the birecurrent line in the limit of the laminations $L(T_k)$ is contained in $L(T)$. This is in fact always true by a result of \cite{CHL:LaminationLimit} (see Proposition~\ref{P:LaminationLimit}). 
\end{example}

We need the following lemma for the proof of Proposition~\ref{P:LaminationLimit}.   

\begin{lemma}\label{L:LaminationInclusion}
Let $T$ be a tree in $\overline{CV}_n$. Then the birecurrent leaves of $L_{\infty}(T)$, which is the algebraic lamination defined by the birecurrent laminary language associated to $L^1(T)$, are contained in $L(T)$. \end{lemma}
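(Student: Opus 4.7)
The plan is to prove $l \in L_{\epsilon}(T)$ for every $\epsilon > 0$, which gives $l \in L(T) = \bigcap_{\epsilon > 0} L_{\epsilon}(T)$. Fix $\epsilon > 0$ and a finite symmetric window $w_n = x_{-n} \cdots x_n$ of $l$ centered at the basepoint $1 \in \op{Cay}(\free, \mathfrak{B})$; I will produce an element $g \in \free$ with $l_T(g) < \epsilon$ whose axis, after an appropriate $\free$-translation, lies in the cylinder neighborhood of $l$ determined by $w_n$.

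Since $l \in L_{\infty}(T)$, by definition $w_n$ lies in the recurrent laminary language associated to $L^1(T)$, so there exists a one-sided infinite word $\xi = y_0 y_1 y_2 \cdots \in L^1(T)$ in which $w_n$ occurs at an infinite subset $J \subset \mathbb{N}$ of positions. By the defining property of $L^1(T)$, the orbit $\{\tilde h_k \cdot p : k \ge 0\}$, where $\tilde h_k = y_0 y_1 \cdots y_{k-1}$ and $p \in T$ is a chosen basepoint, is contained in a bounded subset of $T$. Hence $\{\tilde h_j \cdot p : j \in J\}$ is an infinite bounded sequence and admits $j_1 < j_2$ in $J$ with $d_T(\tilde h_{j_1} \cdot p, \tilde h_{j_2} \cdot p) < \epsilon$. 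Setting $g = \tilde h_{j_2} \tilde h_{j_1}^{-1}$ yields
\[
l_T(g) \le d_T(\tilde h_{j_1} \cdot p,\; g \cdot \tilde h_{j_1} \cdot p) = d_T(\tilde h_{j_1} \cdot p, \tilde h_{j_2} \cdot p) < \epsilon,
\]
and $g$ is conjugate to the reduced word $u = y_{j_1} y_{j_1+1} \cdots y_{j_2-1}$, which begins with $w_n$. Assuming $u$ is cyclically reduced (else pass to its cyclic reduction, a genuine conjugate of $g$), the axis of this conjugate in $\op{Cay}(\free, \mathfrak{B})$ is the bi-infinite periodic word $\ldots u u u \ldots$; a further left translation by the appropriate prefix of $l$ repositions the initial copy of $w_n$ in this axis onto the window $w_n$ of $l$ around the basepoint, so the $\free$-translated axis sits inside the desired cylinder neighborhood of $l$.

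Repeating this construction for a sequence $(w_n, \epsilon_n)$ with $|w_n| \to \infty$ and $\epsilon_n \to 0$ produces hyperbolic elements $g_n$ with $l_T(g_n) \to 0$ whose suitably $\free$-translated axes converge to $l$ in $\partial^2 \free$, so $l \in L(T)$. The main obstacle is the alignment: since between consecutive copies of $w_n$ in the periodic axis one reads letters coming from $\xi$ rather than from $l$, the axis and $l$ agree only on the single aligned copy of $w_n$, and convergence to $l$ in $\partial^2 \free$ must come from letting $|w_n| \to \infty$ rather than from a single $g$ with a long automatic match. The cyclic-reduction caveat is a minor technicality: for $|w_n|$ large, the aligned copy of $w_n$ survives inside the cyclically reduced core of $u$, and the alignment argument goes through unchanged.
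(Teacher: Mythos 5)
Your overall strategy --- for each $\epsilon>0$, produce elements $g$ with $l_T(g)<\epsilon$ whose Cayley-graph axes run through ever longer windows of $l$, so that $l\in L_\epsilon(T)$ for every $\epsilon$ --- is genuinely different from the paper's proof (which is a case analysis on $T$, using the $\mathcal{Q}$-map / collapse maps to show both ends of a birecurrent $T$-bounded leaf land on the same point of $T$). Unfortunately the pivotal step fails. You argue that since $\{\tilde h_k\cdot p\}$ is an infinite bounded subset of $T$, there must be $j_1<j_2$ with $d_T(\tilde h_{j_1}p,\tilde h_{j_2}p)<\epsilon$. Trees in $\overline{CV}_n$ are not locally compact, and bounded subsets need not be totally bounded: whenever $T$ has a branch point of infinite valence (i.e.\ whenever the action is not free, which is exactly when the lemma has content --- for free simplicial $T$ one has $L^1(T)=\emptyset$), an infinite bounded set can be uniformly separated. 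Concretely, in the Bass--Serre tree of $F_3=\langle a,b\rangle\ast\langle c\rangle$ with $v$ the vertex fixed by $\langle a,b\rangle$, the points $\{ucv : u\in\langle a,b\rangle\}$ are infinitely many points all at mutual distance $2\,d(v,cv)>0$. This is precisely the configuration that the prefix points of a bounded ray produce, so the pigeonhole cannot be salvaged by a generic metric argument. What is true in such examples is that the comparison element $\tilde h_{j_2}\tilde h_{j_1}^{-1}$ is \emph{elliptic} (here $u_2c(u_1c)^{-1}=u_2u_1^{-1}$), but seeing that in general requires knowing that a $T$-bounded ray has a tail carried by a point stabilizer or by a dense-orbit subtree --- which is the actual content of the paper's case analysis, not something you get for free.

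Two further concerns. First, your argument never uses the birecurrence of $l$ in any essential way: if it were correct as written it would prove $L_\infty(T)\subseteq L(T)$ for \emph{all} leaves, a stronger statement the paper deliberately does not claim; in the paper's proof birecurrence is what forces a leaf with $T$-bounded ends to avoid crossing (translates of) a positive-length edge, and what allows the reduction to a dense-orbit vertex tree in the mixed case. Second, the cyclic-reduction/alignment paragraph is too loose: the cancelled prefix of $u=y_{j_1}\cdots y_{j_2-1}$ can be as long as $|u|/2$, which may vastly exceed $|w_n|$, so the occurrence of $w_n$ at position $j_1$ need not survive into the cyclically reduced core; you would need to select $j_1,j_2$ so that some occurrence of $w_n$ sits in the middle of $u$, which is an extra argument, not a remark. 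I would recommend either adopting the paper's structural route or first proving, as a lemma, that the tail of any $X\in L^1(T)$ is carried by a point stabilizer of the simplicial part or by a dense-orbit subtree of $T$; with that in hand your construction degenerates to producing elliptic elements and can be made to work.
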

\begin{proof} We look at different cases according to whether $T$ is simplicial or has dense orbits. 
\begin{itemize}
\item \emph{$T$ has dense orbits:} by \cite[Proposition 5.8]{CHL:DualLaminationsII} a stronger statement is true, which says that $L_{\infty}(T) = L(T)$.
\item \emph{$T$ is simplicial with trivial edge stabilizers but is not free:} let $\hat{T}$ be a free simplicial tree with a collapse map $c: \hat{T} \to T$ and $BCC(c)$ equal to zero. The map $c$ extends to $\partial \hat{T}$ and we denote its restriction to $\partial \hat{T}$ by $Q: \partial \hat{T} \to T \sqcup \partial T$. If $X \in \partial \hat{T}$ is carried by a vertex stabilizer of $T$ then $Q(X)$ is precisely (since $c$ has no cancellation) the vertex in $T$ with that stabilizer, otherwise $Q(X)$ is a point in $\partial T$. Let $l=\{X, X^{\p}\}$ be a birecurrent leaf in $L_{\infty}(T)$. Since $X$ and $X^{\p}$ are $T$-bounded, $Q(X)$ and $Q(X^{\p})$ are vertices in $T$. If $Q(X) \neq Q(X^{\p})$ then $l$ crosses an edge $e$ in $\hat{T}$ that maps to a non-degenerate edge in $T$. Since $l$ is birecurrent, $l$ crosses translates of $e$ infinitely often which implies that $X$ or $X^{\p}$ is not $T$-bounded. Thus $Q(X)=Q(X^{\p})$. Thus $l$ is carried by a vertex stabilizer of $T$ and hence $l \in L(T)$. 

\item \emph{$T$ is simplicial with non-trivial edge stabilizers:} by results of \cite{S:DecompositionFreeGroups} and \cite{Shenitzer}, for $T$ there exists $\hat{T}$ a free simplicial tree with an $\free$-equivariant map $c: \hat{T} \to T$ which is a composition of a collapse map and a fold map. The edge paths in $\hat{T}$ that possibly backtrack under the map $c$ are the ones that cross a minimal subtree (of $\hat{T}$) of an edge stabilizer of $T$. By \cite[Lemma 3.1]{BFH:Laminations} $\op{BCC}(c) \leq \op{Lip}(c) \op{vol}(\hat{T})$. By scaling the metric on $\hat{T}$ we may assume that $\op{Lip}(c)$ is less than equal to 1. Since the volume of the free simplicial tree $\hat{T}$ is bounded, $\op{BCC}(c)$ is finite. 

As before consider the map $Q: \partial \hat{T} \to T \sqcup \partial T$. Let $X \in \partial \hat{T}$ be represented by a one-sided infinite word $x$ starting at the basepoint in $\hat{T}$. If the tail of $x$ is carried by a vertex stabilizer of $T$ then except an initial segment, $c(x)$ crosses the corresponding vertex in $T$ infinitely often with possibly some bounded backtracking. Thus we set $Q(X)$ to be that vertex. If the tail of $x$ is carried by an edge stabilizer $H$ then except an initial segment, $c(x)$ is a vertex of $T$ whose stabilizer contains $H$ and we set $Q(X)$ to be that vertex. Even though there are finitely many vertices in $T$ whose stabilizer contains $H$ there is only one minimal subtree for $H$ in $\hat{T}$, which maps to a unique vertex in $T$. Thus in this case $Q(X)$ only depends on the choice of $\hat{T}$. If the tail of $x$ is neither carried by a vertex stabilizer nor an edge stabilizer then $Q(X)$ is an element of $\partial T$. 

Now for a birecurrent leaf $l=\{X, X^{\p}\}$ such that $X$ and $X^{\p}$ are $T$-bounded, we get that $Q(X)=Q(X^{\p})$. Thus the leaf $l$ maps to a vertex of $T$ under the map $c$ with possiblly bounded backtracking from edges in $\hat{T}$ that fold under the map $c$. Hence $l$ is in $L(T)$.        


\item \emph{When $T$ is neither simplicial nor does it have dense orbits:} let $T^{\p}$ be the simplicial tree which is the graph of actions (see \cite{G:LimitGroups} for definition) of $T$ corresponding to the Levitt decomposition \cite{L:GraphOfActions} of $T$. Let $l=\{X, X^{\p}\}$ be a birecurrent leaf in $L_{\infty}(T)$. Since $X, X^{\p} \in L^1(T)$, we get that $X, X^{\p}$ are also $T^{\p}$-bounded. Since $l$ is birecurrent, by the previous two cases we get that $l$ is carried by a vertex stabilizer $H$ of $T^{\p}$. We are interested in the vertices of $T^{\p}$ that correspond to subtrees with dense orbits in $T$. Thus we assume that $l$ is contained in some subtree $T_d$ of $T$ with dense orbits and stabilizer $H$. Since $T_d$ is a subtree of $T$, $X$ and $X^{\p}$ are also $T_d$-bounded. 

The subgroup $H$ is finitely generated because point stabilizers in the very small tree $T^{\p}$ have bounded rank \cite{GL:RankOfActions}. Therefore, there exists a finite graph $\Gamma_H$ and an immersion $i: \Gamma_H \to R_{\mathfrak{B}}$, where $R_{\mathfrak{B}}$ is a rose with petals labeled by elements of a basis $\mathfrak{B}$ of $\free$, such that $\pi_1(i(\Gamma_H)) = H$. Since $H$ carries $l$, which can be viewed as a map $l: \mathbb{Z} \to R_{\mathfrak{B}}$, there exists a map $l_H: \mathbb{Z} \to \Gamma_H$ such that $i \circ l_H = l$. Since $l$ is birecurrent we claim that $l_H$ is also birecurrent. Consider a word $w$ in $l_H$ such that $i(w)$ is a subword of $l$. Since $l$ is birecurrent $i(w)$ appears infinitely often in both ends of $l$. Let $w_1, w_2, \ldots, w_n$ be the pre-images of all occurrences of $i(w)$ in $l_H$. There are only finitely many such $w_i$ because $\Gamma_H$ is a finite graph. Thus at least one of the $w_i$ appears infinitely often in both ends of $l_H$. But we need to show that every such $w_i$ appears infinitely often in $l_H$. So consider a finite subword $u$ of $l_H$ that contains at least one appearance of each $w_i$. Such a word exists because there are only finitely many $w_i$. Now $i(u)$ appears infinitely often in both ends of $l$. Therefore, some pre-image $u_1$ of $i(u)$ in $l_H$ appears infinitely often. Since every pre-image of $i(u)$ contains all the $w_i$s we get that each $w_i$ appears infinitely often in both ends of $l_H$. Thus $l_H$ is birecurrent.          

Let $l_H = \{X_H, X_H^{\p}\}$. Since $i$ is an immersion and $X, X^{\p}$ are $T_d$-bounded we have $X_H, X_H^{\p}$ are also $T_d$-bounded. Thus $l_H$ is in $L_{\infty}(T_d)$, which is equal to $L(T_d)$ by the first case. Since $T_d$ is a subtree of $T$ and $l$ is contained in $T_d$, we get that $l=i\circ l_H$ is in $L(T)$. 
\qedhere
\end{itemize}
\end{proof}

\begin{definition}
A lamination $L$ is called birecurrent if every leaf of $L$ is birecurrent. 
\end{definition}

\begin{proposition}[{\cite{CHL:LaminationLimit}}]\label{P:LaminationLimit}
Let $\{T_k\}_{k \in \mathbb{N}}$ be a sequence of trees in $\overline{CV}_n$ converging to a tree $T$. Also suppose that the sequence of laminations $L(T_k)$ converges to $L_{\infty}$ in $\Lambda^2(\free)$. Let $L_r$ be a birecurrent sublamination of $L_{\infty}$. Then $L_r \subseteq L(T)$.  \end{proposition}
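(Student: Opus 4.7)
The plan is to reduce to Lemma~\ref{L:LaminationInclusion} by showing $L_r\subseteq L_{\infty}(T)$; since leaves of $L_r$ are birecurrent, that lemma then delivers $L_r\subseteq L(T)$. Fix a leaf $\{X,X^{\p}\}\in L_r$; it suffices to prove that both one-sided rays $X,X^{\p}$ lie in $L^1(T)$ (i.e.\ are $T$-bounded) and then invoke birecurrence to place $\{X,X^{\p}\}$ in the birecurrent laminary language of $L^1(T)$, hence in $L_{\infty}(T)$.

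First I would fix a free simplicial $\free$-tree $T_0$ with basepoint $\ast$ and choose equivariant Lipschitz maps $h_k:T_0\to T_k$ and $h:T_0\to T$ with $h_k(\ast)\to h(\ast)$ and uniformly bounded bounded-cancellation constants; this is possible via $\op{BCC}\leq\op{Lip}\cdot v(T_0)$ combined with $T_k\to T$. Realize $\{X,X^{\p}\}$ as a bi-infinite geodesic $\gamma$ through $\ast$ in $T_0$ and, for each $n\geq 1$, let $w_n\subset\gamma$ be the length-$n$ subsegment centered at $\ast$, with endpoints $p_n,q_n$. Because $\{X,X^{\p}\}\in L_r\subseteq L_{\infty}=\lim_k L(T_k)$ in $\Lambda^2(\free)$, for each $n$ there exists $K(n)$ such that for all $k\geq K(n)$ the word $w_n$ appears as a subword of some leaf of $L(T_k)$. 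From the definition $L(T_k)=\bigcap_{\epsilon>0}L_{\epsilon}(T_k)$ I would then extract $g_{n,k}\in\free$ with $l_{T_k}(g_{n,k})<1/n$ whose axis in $T_0$ contains $w_n$.

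The key step is a uniform diameter bound in $T$. Since $p_n,q_n$ lie on the axis of $g_{n,k}$ in $T_0$ at distance $n$, one has $q_n=g_{n,k}^m p_n$ with $m\leq n/l_{T_0}(g_{n,k})\leq n$; by $\free$-equivariance and bounded cancellation,
\[
d_{T_k}(h_k(p_n),h_k(q_n))=d_{T_k}(h_k(p_n),g_{n,k}^m h_k(p_n))\leq m\cdot l_{T_k}(g_{n,k})+2\op{BCC}(h_k)\leq 1+2\op{BCC}(h_k).
\]
Pointwise convergence of translation lengths as $k\to\infty$ (with $n$ fixed) then gives $d_T(h(p_n),h(q_n))\leq 1+2\op{BCC}(h)$, uniformly in $n$. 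Since $w_n$ is centered at $\ast$ and $h$ is Lipschitz on edges, both $h(p_n)$ and $h(q_n)$ remain within uniformly bounded distance of $h(\ast)$; letting $n\to\infty$ shows that $h(X),h(X^{\p})$ are bounded subsets of $T$, so $X,X^{\p}\in L^1(T)$. Because $\{X,X^{\p}\}$ is birecurrent, its finite subwords appear infinitely often in both ends of $X$ and $X^{\p}$, placing them in the birecurrent laminary language of $L^1(T)$; thus $\{X,X^{\p}\}\in L_{\infty}(T)$, and Lemma~\ref{L:LaminationInclusion} yields $\{X,X^{\p}\}\in L(T)$.

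The hardest part will be the diameter estimate above. The naive inequality $d_{T_k}(h_k(p_n),g_{n,k}^m h_k(p_n))=m\cdot l_{T_k}(g_{n,k})+2d$, where $d$ is the distance from $h_k(p_n)$ to the axis of $g_{n,k}$ in $T_k$, only bounds $d$ by $(\op{Lip}(h_k)|g_{n,k}|_{T_0}-l_{T_k}(g_{n,k}))/2$, which can blow up when $|g_{n,k}|_{T_0}$ is large. Overcoming this obstacle will require either selecting $g_{n,k}$ with word length controlled in a fixed basis (exploiting the freedom in the definition of $L_{\epsilon}(T_k)$ and the fact that $w_n$ already pins down a long stretch of its axis), or rerouting the argument through measured currents $\mu_k$ supported on $L(T_k)$ and invoking continuity of the Kapovich--Lustig intersection form on $\overline{cv}_n\times\mathcal{MC}(\free)$ to obtain a limit current $\mu$ with $\op{supp}(\mu)\supseteq L_r$ and $\langle T,\mu\rangle=0$, and then applying the zero-pairing characterization of $L(T)$.
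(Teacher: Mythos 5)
Your overall strategy is the same as the paper's: show that each end of a birecurrent leaf of $L_{\infty}$ is $T$-bounded by exploiting elements of small $T_k$-translation length supplied by the convergence $L(T_k)\to L_{\infty}$, and then feed the resulting membership in $L_{\infty}(T)$ into Lemma~\ref{L:LaminationInclusion}. But as written the proposal does not close the one step where all the work lies, and you say so yourself: the displayed bound $d_{T_k}(h_k(p_n),g_{n,k}^m h_k(p_n))\leq m\,l_{T_k}(g_{n,k})+2\op{BCC}(h_k)$ is asserted, then retracted in your final paragraph, and replaced by two unexecuted alternatives. An argument that flags its central estimate as an unresolved obstacle is not yet a proof.

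The estimate you want is in fact true, for precisely the reason the paper's appeal to \cite{CHL:DualLaminationsII} supplies: since $p_n$ lies \emph{on the axis} of $g_{n,k}$ in $T_0$ (not at an arbitrary point), bounded cancellation applied to a long axis segment $[g_{n,k}^{-N}p_n,\,g_{n,k}^{N}p_n]$ forces $h_k(p_n)$ to lie within $\op{BCC}(h_k)$ of the characteristic set of $g_{n,k}$ in $T_k$; this is the content of the inequality $d_{T_k}(up_k,p_k)\le 2\op{BCC}(\mathfrak{B},p_k)+||w||_{T_k}$ for $u$ a subword of the cyclically reduced $w$, which is how the paper phrases it. So your ``naive'' inequality is correct, but it must be proved (and the elliptic case $l_{T_k}(g_{n,k})=0$ handled separately), not worried about. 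Two further points you gloss over are genuinely needed and occupy most of the paper's argument: (i) the maps $h_k$ must have uniformly bounded Lipschitz and bounded-cancellation constants, which requires an argument (the paper controls $\op{BCC}(\mathfrak{B},p_k)$ by approximating points near the axis of $w$, and in the companion Lemma~\ref{L:LaminationLimit} controls $\op{Lip}(T_0,T_k)$ via candidates); and (ii) passing from $d_{T_k}(h_k(p_n),h_k(q_n))$ to $d_T(h(p_n),h(q_n))$ is not ``pointwise convergence of translation lengths'' --- convergence in $\overline{CV}_n$ is convergence of length functions, and converting it into convergence of distances between specific marked points requires the equivariant Gromov--Hausdorff approximation machinery of \cite{H:SpectralRigidity} that the paper invokes explicitly.
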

For completeness we give a proof of the above proposition. 
\begin{proof}
We will use notation from \cite{CHL:DualLaminationsII}. If the trees $T_k$ are free simplicial then their dual lamination is empty and the lemma is true vacuously. So let's assume that $L(T_k)$ is non-empty. Let $l=\{X, X^{\p}\}$ be a leaf of $L_{\infty}$. Fix a basis $\mathfrak{B}$ of $\free$ and realize $X$ in this basis as a one-sided infinite word. For $l \geq 1$, let $X_l \in \free$ be the prefix of length $l$ of $X$. We first show that $X \in L^1(T)$, that is, for a point $p \in T$ the sequence $X_l p$ is bounded in $T$. Suppose not. Then for any $C>0$, $p \in T$, $K_0>0$, there exists $q>r>K_0$ such that $d_T(X_q p, X_r p)>C$. Let $u = X_r^{-1}X_q$. Then $d_T(up, p)>C$. By Gromov-Hausdorff topology on $\overline{CV}_n$, given $p , up \in T$, let $p_k, s_k\in T_k$ be approximations of $p$ and $up$ relative to some exhaustions (see \cite[Lemma 4.1]{H:SpectralRigidity} for details). Then $d_{T_k}(up_k, s_k)$ goes to zero and $d_{T_k}(s_k, p_k)$ goes to $d_{T}(up, p)$ as $k \to \infty$. Thus given $\delta>0$ there exists a $K_1>0$ such that for all $k > K_1$, $d_T(up, p) - \delta \leq d_{T_k}(up_k, p_k)$, or in other words, $d_{T_k}(up_k, p_k) \geq C-\delta$. 

Now by the convergence criterion (Definition~\ref{D:ConvergenceLaminations}), for any $m\geq 1$ there exists a $K_2(m)>0$ such that for all $k \geq K_2$, $\mathcal{L}_m(L(T_k)) = \mathcal{L}_m(L_{\infty})$. Let $m$ be the word length of $u$ with respect to the fixed basis. Since $u \in \mathcal{L}_m(L_{\infty})$ we get that $u \in \mathcal{L}_m(L(T_k))$ for all $k > \op{max}(K_0, K_1, K_2)$. By \cite[Remark 4.2]{CHL:DualLaminationsII}, this means that for every $\epsilon >0$ there exists a cyclically reduced $w$ in $\free$ such that $||w||_{T_k} < \epsilon$ and $u$ is a subword of $w$. Also by \cite[Lemma 3.1(c)]{CHL:DualLaminationsII} $$ d_{T_k}(up_k, p_k) \leq 2\op{BCC}(\mathfrak{B}, p_k)+||w||_{T_k},$$ where $\op{BCC}(\mathfrak{B}, p_k)$ is the bounded cancellation constant of the $\free$-equivariant map from $\op{Cay}(\mathfrak{B})$ to $T_k$ such that the base point of $\op{Cay}(\mathfrak{B})$ is mapped to $p_k$. We claim that $\op{BCC}_k:=\op{BCC}(\mathfrak{B}, p_k)$ is bounded above by a constant. Let $\op{BCC}_T:=\op{BCC}(\mathfrak{B}, p)$. Since $up$ is in the $\op{BCC}_T$ neighborhood of an axis of $w$ in $T$ then by \cite[Lemma 4.1 (c)]{H:SpectralRigidity} for sufficiently large $k$, $s_k$ is in the $\op{BCC}_T+1$ neighborhood of axis of $w$ in $T_k$. Given $\delta^{\p}>0$, for sufficiently large $k$, $d_{T_k}(up_k, s_k) \leq \delta^{\p}$. Therefore, $up_k$ is in a $\op{BCC}_T+1+\delta^{\p}$ neighborhood of axis of $w$ in $T_k$. Since this is true for any cyclically reduced word $w$ and a subword $u$ we get that $BCC_k \leq \op{BCC}_T+1+\delta^{\p}$.

By choosing $C$ large enough we get a contradiction since 
$$C-\delta \leq d_{T_k}(up_k, p_k) \leq 2BCC_k+||w||_{T_k} \leq 2(BCC_T+1+\delta^{\p}) + \epsilon$$ for all $k$ sufficiently large. Thus we have that $X$ and similarly $X^{\p}$ are both in $L^1(T)$. Since $l$ is birecurrent, we get $l \in L_{\infty}(T)$. 

Now if $l = \{X, X^{\p}\}$ is birecurrent and $l \in L_{\infty}(T)$ then by Lemma~\ref{L:LaminationInclusion}, $l \in L(T)$. Thus $L_r \subseteq L(T)$. 
\end{proof}

\begin{lemma}\label{L:LaminationLimit}
Let $\{T_k\}_{k \in \mathbb{N}}$ be a sequence of trees in $\overline{CV}_n$ converging to a tree $T$ such that $T$ has dense orbits. Also suppose that the sequence of laminations $L(T_k)$ converges to $L_{\infty}$ in $\Lambda^2(\free)$. Then $L_{\infty} \subseteq L(T)$.  \end{lemma}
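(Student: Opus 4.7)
The plan is to extend the proof of Proposition~\ref{P:LaminationLimit}, using the dense-orbit hypothesis on $T$ to eliminate the birecurrence requirement. Let $l = \{X, X^{\p}\} \in L_{\infty}$. The first portion of the proof of Proposition~\ref{P:LaminationLimit}---which establishes that the endpoint $X$ (and symmetrically $X^{\p}$) lies in $L^1(T)$---uses only the convergence $T_k \to T$ in $\overline{CV}_n$ and the convergence $L(T_k) \to L_{\infty}$ in $\Lambda^2(\free)$; birecurrence of $l$ is never invoked there. Applying this step verbatim, I obtain $X, X^{\p} \in L^1(T)$.

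Since $T$ has dense orbits, by \cite{CHL:DualLaminationsII} one has $L(T) = L_{\mathcal{Q}}(T) = \{\{Y, Y^{\p}\} \in \partial^2 \free : \mathcal{Q}(Y) = \mathcal{Q}(Y^{\p})\}$, so it suffices to prove $\mathcal{Q}(X) = \mathcal{Q}(X^{\p})$ in $\overline{T}$. I argue by contradiction: suppose $d := d_T(\mathcal{Q}(X), \mathcal{Q}(X^{\p})) > 0$. By Proposition~\ref{P:SmallBBT} pick an equivariant map $h : T_0 \to T$ with $T_0 \in \relCV$ and $\op{BCC}(h) < d/100$. Proposition~\ref{P:Q map} then implies that a bi-infinite geodesic $\gamma_0 \subset T_0$ realizing $l$ has $h(\gamma_0)$ of diameter at least $d - 4\op{BCC}(h) > d/2$; hence some finite subsegment $\sigma_0 \subset \gamma_0$ realizing a finite word $u$ has $h(\sigma_0)$ of diameter $> d/2$.

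Since $l \in L_{\infty}$, Definition~\ref{D:ConvergenceLaminations} gives $u \in \mathcal{L}_{|u|}(L(T_k))$ for every $k \geq K(|u|)$; unwrapping $L(T_k) = \bigcap_{\epsilon>0} L_{\epsilon}(T_k)$, I extract $g_k \in \free$ with $l_{T_k}(g_k) < 1/k$ and $u$ a subword of $g_k^{\infty}$. In $T_0$, the realization of $u$ along the axis of $g_k$ is an $\free$-translate $\tilde{\sigma}_k = a_k \sigma_0$, so by equivariance $h(\tilde{\sigma}_k) = a_k \cdot h(\sigma_0)$ still has diameter $> d/2$. On the other hand, choosing approximating equivariant maps $h_k : T_0 \to T_k$ with $\op{BCC}(h_k) \to 0$, the image $h_k(\tilde{\sigma}_k)$ lies near the axis of $g_k$ in $T_k$ and covers a portion of that axis of length at most $(|u|/|g_k| + 1)\, l_{T_k}(g_k) \leq (|u|+1)/k \to 0$. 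Transferring this small-diameter estimate from $T_k$ back to $T$ via pointwise convergence of translation length functions yields $h(\tilde{\sigma}_k)$ of vanishing diameter, the desired contradiction.

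The hard part will be this final transfer of the diameter estimate from $T_k$ to $T$, since one is comparing different equivariant maps into different limit trees. A case analysis on $|g_k|$ seems necessary. In the bounded case I pass to a subsequence with $g_k$ eventually equal to a fixed $g$; pointwise continuity of translation length then forces $l_T(g) = 0$, and since $T$ has trivial arc stabilizers, $g$ fixes a unique point of $T$ into whose $2\op{BCC}(h)$-ball $h$ sends the axis of $g$ in $T_0$---directly contradicting diameter $> d/2$. The case $|g_k| \to \infty$ is more delicate and will require a Gromov-Hausdorff-style argument (in the spirit of \cite[Lemma~4.1]{H:SpectralRigidity}, as used in the proof of Proposition~\ref{P:LaminationLimit}) to compare $h$ and $h_k$ on the varying $\tilde{\sigma}_k$; this is where the bulk of the technical work lies.
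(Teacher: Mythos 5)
Your overall route is the same as the paper's: realize a leaf $l$ of $L_{\infty}$ in a tree $T_0$ carrying an equivariant map $h\colon T_0\to T$ with very small bounded cancellation (possible because $T$ has dense orbits), observe that finite subwords of $l$ occur in $L(T_k)$ for large $k$ and therefore have small-diameter images in $T_k$, and transfer that estimate back to $T$. Your preliminary step (endpoints lie in $L^1(T)$, via the first half of the proof of Proposition~\ref{P:LaminationLimit}) is correct, though it is subsumed once the diameter estimate is in place. The genuine gap is exactly the step you defer, and one of the ingredients you posit for it is unattainable: you cannot choose equivariant maps $h_k\colon T_0\to T_k$ with $\op{BCC}(h_k)\to 0$, since for a fixed $T_0$ the bounded cancellation into $T_k$ is governed by $\op{Lip}(h_k)\,\op{vol}(T_0/\free)$ and has no reason to vanish as $k\to\infty$. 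What is true, and what the paper proves, is that $\op{BCC}(h_k)$ can be made \emph{uniformly comparable} to $\op{BCC}(h)$: the optimal Lipschitz constant $\op{Lip}(T_0,T_k)$ is computed on the finite candidate set of $T_0$, whose translation lengths converge, so $\op{Lip}(T_0,T_k)\le \op{Lip}(T_0,T)+\delta'$ for large $k$; realizing this by a map $h_k$ gives $\op{BCC}(h_k)\le(1+\delta')\op{vol}(T_0/\free)\le(1+\delta')\epsilon$. This uniform smallness for arbitrary $\epsilon$ is precisely where the dense-orbits hypothesis enters (via \cite[Proposition 2.2]{LL:NSDynamics}); in Proposition~\ref{P:LaminationLimit} the analogous constants are only shown to be \emph{bounded}, which is why that proposition needs birecurrence while this lemma does not.

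Once $\op{BCC}(h_k)\le(1+\delta')\epsilon$ is in hand, the transfer needs neither a case analysis on $|g_k|$ nor a pointwise comparison of $h$ with $h_k$: by equivariance your estimate on $h_k(\tilde\sigma_k)$ applies verbatim to the fixed segment $\sigma_0$, giving $\op{diam}_{T_k}(h_k(\sigma_0))\le 2(1+\delta')\epsilon+o_k(1)$, and the bridge back to $T$ is made through the translation lengths of the subwords of $l$ (which converge by the very definition of the topology on $\overline{CV}_n$, or via the Gromov--Hausdorff approximation of finitely many points as in the proof of Proposition~\ref{P:LaminationLimit}), yielding $\op{diam}_{\overline{T}}(h(l))\le 2(1+\delta')\epsilon$ and hence $l\in L(T)$ as $\epsilon\to 0$. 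So the skeleton of your argument matches the paper's, but the construction of the maps $h_k$ with uniformly small cancellation and the resulting transfer are the actual content of the lemma, and neither is supplied in your proposal. (A minor point: Proposition~\ref{P:SmallBBT} as stated produces $T_0\in\relCV$; this lemma lives in $\overline{CV}_n$, so you should invoke \cite[Proposition 2.2]{LL:NSDynamics} directly, as the paper does.)
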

\begin{proof} If the trees $T_k$ are free simplicial then $L_{\infty} = \emptyset$. Thus after passing to a subsequence we assume that $L(T_k) \neq \emptyset$. Since $T$ has dense orbits, by \cite[Proposition 2.2]{LL:NSDynamics} (see Proposition~\ref{P:SmallBBT}), given $\epsilon >0$ there exists a free simplicial $\free$-tree $S$ and an $\free$-equivariant map $h: S \to T$ which is isometric on edges ($\op{Lip}(h) = 1$) and $\op{BCC}(h) < \op{vol}(S) <\epsilon$. We will now construct $\free$-equivariant maps $h_k: S \to T_k$ for $k$ sufficiently large such that $\op{BCC}(h_k)$ is bounded above by a linear function of $\epsilon$. 

For trees $S \in CV_n$ and $T$ in $\overline{CV}_n$, let $\op{Lip}(S, T)$ be the infimum of the Lipschitz constant of all $\free$-equivariant maps $f:S \to T$. By \cite[Proposition 4.5]{AK:MetricCompletion}, \cite[Theorem 0.2]{H:SpectralRigidity}, $\op{Lip}(S, T)$ is equal to $$\Lambda(S, T) := \sup_{g \in \free \setminus \{1\}}\frac{||g||_{T}}{||g||_S}. $$ By \cite[Proposition 4.5]{AK:MetricCompletion}, \cite[Proposition 6.15, 6.16]{H:SpectralRigidity} the supremum above can be taken over a set of candidates $\mathcal{C}(S) \subset \free$. Since $S$ is free simplicial, the set $\mathcal{C}(S)$ is finite. 

For every $\delta >0$ and the finite set $\mathcal{C}(S)$ of elements of $\free$, there exists a $K>0$ such that for all $k\geq K$ and for all $g \in \mathcal{C}(S)$, $$||g||_{T_k} \leq ||g||_T+\delta.$$

Thus we have that $\Lambda(S, T_k) \leq \Lambda(S, T) + \delta^{\p}$ where $\delta^{\p}$ is the maximum of $\delta / ||g||_S$ over all $g \in \mathcal{C}(S)$. This implies that $\op{Lip}(S, T_k) \leq \op{Lip}(S, T) + \delta^{\p} \leq \op{Lip}(h) + \delta^{\p} \leq 1+\delta^{\p}$. By \cite[Theorem 0.4]{H:SpectralRigidity} $\op{Lip}(S,T_k)$ is realized, that is, there exists an $\free$-equivariant map $h_k: S \to \overline{T_k}$, where $\overline{T_k}$ is the closure of $T_k$, such that $\op{Lip}(h_k) = \op{Lip}(S, T_k)\leq 1+\delta^{\p}$ for all $k \geq K$. Also $$\op{BCC}(h_k) \leq \op{Lip}(h_k) \op{vol}(S) \leq (1+\delta^{\p})\epsilon.$$

Now consider a sequence of leaves $l_k \in L(T_k)$ converging to a leaf $l \in L_{\infty}$. Then by Proposition~\ref{P:Q map} ($\mathcal{Q}$ map), the diameter of $h_k(l_k)$ in $\overline{T_k}$ is bounded by $ 2 \op{BCC}(h_k)$ which is less than $2(1+\delta^{\p}) \epsilon$. Thus, in the limit, the diameter of $h(l)$ in $\overline{T}$ is bounded above by $2(1+\delta^{\p})\epsilon$. Since $\epsilon$ and $\delta$ were arbitrary, we get that $l \in L(T)$.
\end{proof}
\subsection{Support of a relative current}
Support of a relative current $\eta$ is defined as the closure in $\mathbf{Y}$ (see Section~\ref{subsec:RelativeCurrents} for definition) of the intersection of the complement of all open sets $U \subset \mathbf{Y}$ such that $\eta(U)=0$. For $\eta \in \prc$, $\op{supp}(\eta)$ is a closed, non-empty and $\free$-invariant subset of $\mathbf{Y}$. Since $\mathbf{Y}$ is not a closed subset of $\partial^2 \free$, $\op{supp}(\eta) \subset \mathbf{Y}$ may not be a closed subset of $\partial^2 \free$. Let $\overline{\op{supp}(\eta)}$ denote its closure in $\partial^2\free$. Then $\overline{\op{supp}(\eta)} \setminus \op{supp}(\eta)$ is contained in $\partial^2 \ffa$ which is non-empty when lines in $\op{supp}(\eta)$ accumulate on lines in $\partial^2 \ffa$.    

\begin{example}\label{E:CurrentSequence} Let $F_2 = \la a, b \ra$, $\ffa = \{[\la a \ra]\}$ and consider the sequence of relative currents $\eta_{a^nb}$ converging to $\eta_{\infty}$ in $\prc$ as in Example~\ref{E:CounterExample}. Then $\op{supp}(\eta_{\infty})$ is given by bi-infinite geodesics determined by $\ldots aaa \, b \, aaa \ldots$. Thus the set $\overline{\op{supp}(\eta_{\infty})}$ also contains the bi-infinite lines given by $\ldots aaaa \ldots$. 
Geometrically, consider a lamination $L$ on a torus with one puncture (with fundamental group identified with $F_2 = \la a, b \ra$) as follows: the lamination $L$ contains the simple closed curve $a$ and another leaf $l$ which goes around $b$ and spirals towards $a$ from both sides. In the absolute case, the support of the current $\mu_{a^kb}$ are the curves $a$, $b$ and the curve $c_k$ obtained by Dehn twisting $b$ around $a$, $k$ times. The absolute currents $\mu_{a^kb}$ projectively converge to the absolute current $\mu_{a}$ whose support is just the curve $a$. But in the relative case, the support of the relative current $\eta_{a^kb}$ is the curve $c_k$ and the relative currents $\eta_{a^kb}$ converge to $\eta_{\infty}$ whose support is the leaf $l$. Thus when we take the closure of $l$ we also get the curve $a$.  \end{example}

We have that $\overline{\op{supp}(\eta)}$ is a closed, non-empty, $\free$-invariant subset of $\partial^2\free$. Recall Notation~\ref{N:maps} for a relative train track representative of $\oo$. 

\begin{lemma}\label{L:MinimalLamination}
$\lamination \cap \mathbf{Y}$ is minimal in $\mathbf{Y}$, that is, $\lamination \cap \mathbf{Y}$ contains no proper closed (in $\mathbf{Y}$), non-empty $\free$-invariant subset.\end{lemma}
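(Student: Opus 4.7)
The plan is to reduce the minimality statement to the single assertion that the generic leaf $\beta$ of $\lamination$ lies in the orbit closure $\overline{\free \cdot l}$ in $\partial^2 \free$ for every $l \in \lamination \cap \mathbf{Y}$. Assume this is proved. Let $L \subseteq \lamination \cap \mathbf{Y}$ be non-empty, $\free$-invariant, and closed in $\mathbf{Y}$, and pick $l \in L$. The closure $\overline{\free \cdot l}$ in $\partial^2 \free$ is $\free$-invariant, closed, and contained in $\lamination$; since $L$ is closed in $\mathbf{Y}$ and contains $\free \cdot l$, its intersection $\overline{\free \cdot l} \cap \mathbf{Y}$ sits inside $L$. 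The assumption $\beta \in \overline{\free \cdot l}$ then gives $\lamination = \overline{\free \cdot \beta} \subseteq \overline{\free \cdot l}$, and intersecting with $\mathbf{Y}$ yields $\lamination \cap \mathbf{Y} \subseteq L$, whence $L = \lamination \cap \mathbf{Y}$.

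To establish the key assertion $\beta \in \overline{\free \cdot l}$, it suffices to show that every finite subpath $\tau$ of a realization of $\beta$ in $G^{\p}$ also appears as a subpath of the realization of $l$ in $G^{\p}$: the cylinder neighborhoods $C(\tau)$ form a neighborhood basis of $\beta$ in $\partial^2 \free$, and if $\tau$ occurs in $l$ then an appropriate $\free$-translate of $l$ places $\tau$ at the basepoint and thus lies in $C(\tau)$.

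The combinatorial engine uses the top EG stratum $H_r$. After passing to a power of $\oo$ (which preserves $\lamination$), I may assume the transition matrix $M_r$ is primitive, so there exists $N \geq 1$ such that $\phi^{\p N}(e)$ contains $\tau$ as a legal subpath (by the relative train track property on $H_r$) for every oriented $H_r$-edge $e$. Combined with birecurrence of $\beta$, this yields a bound $K(\tau)$ such that every subpath of $\beta$ crossing at least $K(\tau)$ many $H_r$-edges contains $\tau$. Since $l \in \mathbf{Y}$, its realization crosses at least one $H_r$-edge (otherwise it would lie in a single component of $G_{r-1}$, forcing both endpoints into $\partial A$ for the same $A \in \ffa$); I next argue $l$ crosses infinitely many $H_r$-edges in at least one direction (otherwise both ends eventually enter components of $G_{r-1}$, and an analysis of how $l$ arises as a limit of translates of $\beta$, constrained by $M_r$-primitivity, forces those two components to coincide, contradicting $l \in \mathbf{Y}$). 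On the infinite side, pick a subpath of $l$ crossing at least $K(\tau)$ many $H_r$-edges; since $l$ is a limit of $\free$-translates of $\beta$, any finite subpath of $l$ is also a subpath of $\beta$, and the uniform recurrence bound $K(\tau)$ forces $\tau$ to appear inside it, hence inside $l$.

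The main obstacle I anticipate is the step ruling out leaves $l \in \lamination \cap \mathbf{Y}$ with only finitely many $H_r$-edge crossings: such configurations do not arise in the absolute fully irreducible case, but they are a genuine possibility in the relative setting where $\beta$ may carry unbounded $G_{r-1}$-excursions, and handling them requires a careful limit analysis of translates $g_n \beta$ with basepoint placed deep inside such excursions.
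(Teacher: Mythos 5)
Your reduction is fine (minimality follows once $\beta\in\overline{\free\cdot l}$ for every $l\in\lamination\cap\mathbf{Y}$), and so is the recurrence mechanism, though the uniform bound $K(\tau)$ really comes from decomposing $\beta$ into blocks $[(\phi_0^{\p})^N(e)]$ and invoking primitivity of $M_r$, not from birecurrence alone --- birecurrence by itself gives no syndetic bound. Note also that this whole route is different from, and much longer than, the paper's: the paper simply cites \cite[Lemma~3.1.15]{BFH:Tits}, which already packages the statement that a leaf of $\lamination$ whose realization in $G^{\p}$ is not contained in $G_{r-1}$ has closure equal to all of $\lamination$; since every leaf of a closed non-empty invariant $S\subseteq\lamination\cap\mathbf{Y}$ crosses $H_r$, minimality is immediate. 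What you are proposing is essentially a from-scratch reproof of that lemma.

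Within your route there is a genuine gap, and it is exactly the step you flag but do not carry out: ruling out leaves $l\in\lamination\cap\mathbf{Y}$ crossing only finitely many $H_r$-edges. Such an $l$ would read $X^{-1}\sigma Y$ with $\sigma$ a \emph{fixed} finite path meeting $H_r$ and each end eventually inside a translate of a component of $G_{r-1}$; for $l$ to lie in $\overline{\free\cdot\beta}$, the leaf $\beta$ would have to contain subpaths $u_n\sigma v_n$ with $u_n,v_n$ arbitrarily long $G_{r-1}$-excursions separated by the bounded path $\sigma$. The tool that kills this is relative train track property (i): since $[\phi_0^{\p}(E)]$ begins and ends with $H_r$-edges for every edge $E$ of $H_r$, any $G_{r-1}$-excursion of $\beta$ longer than a fixed constant is the tightened image of an excursion of the previous generation; iterating, two excursions of length at least $L$ pull back to excursions separated by at least one $H_r$-edge whose $k$-fold image contributes on the order of $\PFevalue^{k}$ intervening $H_r$-edges, with $k\to\infty$ as $L\to\infty$. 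Hence no fixed $\sigma$ can separate two unboundedly long excursions, and the problematic leaves do not occur. Your parenthetical diagnosis is also slightly off: if both ends of $l$ lay in the boundary of a single conjugate of some $A\in\ffa$, then $l$ would be entirely contained in the corresponding translate and hence in $\partial^2\ffa$, so not in $\mathbf{Y}$ to begin with; the real content of the missing step is that leaves with ends in two \emph{different} conjugates and only a bounded $H_r$-part between them are not in $\lamination$ at all. Either supply this argument or replace the whole construction by the citation of \cite[Lemma~3.1.15]{BFH:Tits} as the paper does.
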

\begin{proof} By \cite[Lemma 3.1.15]{BFH:Tits} we have the following: suppose $\delta$ is a generic leaf in $\lamination$ that is not entirely contained in $G_{r-1}$. Then the closure of $\delta$ in $\partial^2 \free$ is all of $\lamination$. Suppose $\lamination \cap \mathbf{Y}$ contains a proper, closed (in $\mathbf{Y}$), non-empty, $\free$-invariant subset $S$. A generic leaf $\delta$ in $S$ is not entirely contained in $G_{r-1}$ where $\mathcal{F}(G_{r-1}) = \ffa$. Since $\mathbf{Y}$ gets subspace topology from $\partial^2\free$, the closure of $\delta$ in $\mathbf{Y}$ is all of $\lamination \cap \mathbf{Y}$ which is a contradiction. 
\end{proof}

\begin{lemma}\label{L:Support} We have 
\begin{enumerate}[(a)]
 \item $\op{supp}(\PMCurrents)$ as a subset of $\mathbf{Y}$ is equal to $\PMlaminations \cap \mathbf{Y}$. 
 \item $\overline{\op{supp}(\PMCurrents)} \subseteq \PMlaminations \cup \partial^2 \ffa$.
\end{enumerate} \end{lemma}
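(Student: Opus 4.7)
The plan is to prove the inclusion $\op{supp}(\PMCurrents) \subseteq \PMlaminations \cap \mathbf{Y}$ for each sign and then invoke minimality of $\PMlaminations \cap \mathbf{Y}$ (Lemma~\ref{L:MinimalLamination}) to upgrade this to equality in (a). Part (b) will then follow by taking closures in $\partial^2\free$ and using that $\PMlaminations$ is closed. I will write the argument for the attracting pair $(\StableCurrent, \lamination)$; the case of $(\UnstableCurrent, \Rlamination)$ is identical after replacing $\oo$ by $\oo^{-1}$ and $\phi$ by an $\ffa$-train track representative of the inverse.

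For the forward inclusion, I would use Theorem B to realize $\StableCurrent$ as a projective limit of rational currents supported on axes. Pick any rational current $\eta_\alpha \in \mrc$ whose projective class is not $[\UnstableCurrent]$; by north-south dynamics, the normalized iterates $\oo^n(\eta_\alpha)/\|\oo^n(\eta_\alpha)\|$ converge to $\StableCurrent$ in $\prc$, and $\oo^n(\eta_\alpha) = \eta_{\oo^n(\alpha)}$. Let $x \in \op{supp}(\StableCurrent)$; by construction $x \in \mathbf{Y}$, so $x \notin \partial^2\ffa$. For every finite subword $v$ of $x$ in a fixed relative basis, the cylinder $C(v) \subset \mathbf{Y}$ is an open neighborhood of $x$ with $\StableCurrent(C(v)) > 0$, and weak-$\ast$ convergence then forces $\eta_{\oo^n(\alpha)}(C(v)) > 0$ for all large $n$. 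Thus $v$ occurs as a subword of the cyclic word $\oo^n(\alpha)$. By the relative train track property of $\phi_0^{\p}$, as $n \to \infty$ the cyclic word $\oo^n(\alpha)$ decomposes, up to bounded cancellation and a fraction of positions tending to zero (controlled by the dominant Perron–Frobenius growth of the top EG stratum $H_r$), into long segments of leaves of $\lamination(G^{\p})$. Hence any fixed-length $v$ appearing in $\oo^n(\alpha)$ for all large $n$ must lie inside such a leaf segment and so be a subword of a leaf of $\lamination$. Since this holds for every finite subword of $x$, and $\lamination$ is closed in $\partial^2\free$, we conclude $x \in \lamination \cap \mathbf{Y}$.

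The reverse inclusion is then immediate from minimality: $\op{supp}(\StableCurrent)$ is a closed (in $\mathbf{Y}$), non-empty (as $\StableCurrent \neq 0$), $\free$-invariant subset of $\lamination \cap \mathbf{Y}$, so Lemma~\ref{L:MinimalLamination} forces equality. For (b), since $\lamination$ is closed in $\partial^2\free$ and $\op{supp}(\StableCurrent) \subseteq \lamination$ by (a), taking closure in $\partial^2\free$ yields $\overline{\op{supp}(\StableCurrent)} \subseteq \lamination \subseteq \lamination \cup \partial^2\ffa$. The main obstacle is the train-track bookkeeping in the forward direction of (a): one must carefully combine Perron–Frobenius growth on $H_r$ with bounded cancellation to certify that a finite word $v$ occurring with nonvanishing frequency in $\oo^n(\alpha)$ truly lies inside a leaf segment of $\lamination(G^{\p})$, rather than straddling the bounded boundary regions between adjacent leaf segments in the decomposition of $\oo^n(\alpha)$.
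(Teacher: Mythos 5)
Your overall architecture matches the paper's: reduce to the attracting case, prove $\op{supp}(\StableCurrent)\subseteq\lamination\cap\mathbf{Y}$ via the rational approximants $\eta_{\oo^n(\alpha)}$, upgrade to equality by Lemma~\ref{L:MinimalLamination}, and get (b) from closedness of $\lamination$. The paper just runs the first inclusion contrapositively: it covers the complement of $\lamination\cap\mathbf{Y}$ in $\mathbf{Y}$ by cylinders $C(\gamma)$ with $\gamma$ crossing $H_r$ and not a leaf subpath, observes that $\alpha_m=(\phi^{\p})^m(\alpha)$ is a concatenation of at most $N$ $r$-legal paths so that $\eta_{\alpha_m}(C(\gamma))\leq N$ for all $m$, and concludes $\StableCurrent(C(\gamma))=\lim_m \eta_{\alpha_m}(C(\gamma))/\PFevalue^m=0$.

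There is, however, a genuine gap at your ``Hence.'' From $\StableCurrent(C(v))>0$ you extract only that $\eta_{\oo^n(\alpha)}(C(v))>0$ for large $n$, i.e.\ that $v$ occurs at least once in the cyclic word $\oo^n(\alpha)$, and then assert that such a $v$ must lie inside a leaf segment. That deduction fails: $\oo^n(\alpha)$ has a bounded number (at most $N$) of illegal turns separating its $r$-legal pieces, and these persist for all $n$, so a word $v$ straddling one of them occurs in $\oo^n(\alpha)$ for every $n$ without ever being a leaf subword. Knowing that the \emph{fraction} of bad positions tends to zero does not prevent a specific $v$ from occurring only at bad positions. The correct step --- which is exactly the quantitative content of the paper's argument --- is that the convergence is of the currents normalized by $\PFevalue^n$, so $\StableCurrent(C(v))>0$ forces the \emph{number} of occurrences of $v$ in $\oo^n(\alpha)$ to grow like $c\,\PFevalue^n$, whereas occurrences straddling concatenation points contribute at most the constant $N$; hence for large $n$ some occurrence of $v$ lies entirely inside an $r$-legal leaf segment, making $v$ a leaf subword. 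You flagged this as ``the main obstacle'' but did not close it; without that counting comparison the forward inclusion is not established.
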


A proof of a similar fact in the case of a fully irreducible automorphism can be found in \cite[Proposition 6.1]{CP:Twisting}. 

\begin{proof}
Let $a$ be a primitive conjugacy class in $[\free \setminus \ffa]$ realized as $\alpha$ in $G^{\p} = T_G^{\p} / \free$. Then $\alpha$ is a union of $N$ $r$-legal paths for some $N >0$. 
For every $m\geq 0$, $\alpha_m:=(\phi^{\p})^m(\alpha)$ contains at most $N$ segments of leaves of $\lamination \cap \mathbf{Y}$. Let the complement of $\lamination \cap \mathbf{Y}$ in $\mathbf{Y}$ be covered by cylinder sets $C(\gamma)$ where $\gamma$ is a subpath of $G^{\prime}$ that crosses $H_r$ and is not crossed by any leaf of $\lamination$. For every $m \geq 0$, $\alpha_m$ contains at most $N$ occurrences of $\gamma$ (at concatenation points of the $r$-legal segments). Thus $\eta_{\alpha_m}(C(\gamma)) \leq N$. Since $\eta_{\alpha_m}/\PFevalue^m \to \StableCurrent$ as $m \to \infty$, we have that $\StableCurrent(C(\gamma)) = 0$. Thus $\op{supp}(\StableCurrent) \subseteq \lamination \cap \mathbf{Y}$. By Lemma~\ref{L:MinimalLamination}, $\lamination \cap \mathbf{Y}$ is minimal in $\mathbf{Y}$ therefore we have $\op{supp}(\StableCurrent) = \lamination \cap \mathbf{Y}$. Since $\lamination$ is a closed subset of $\partial^2\free$ we get that $\overline{\op{supp}(\StableCurrent)} \subseteq \lamination \cup \partial^2 \ffa$. 
\end{proof}

\begin{lemma}\label{L:CurrentLimit}
Let $\{\eta_n\}_{n \in \mathbb{N}}$ be a sequence of relative currents converging to a relative current $\eta$. Suppose the sequence $\op{supp}(\eta_n)$ converges to $S \subset \mathbf{Y}$. Then $\op{supp}(\eta) \subseteq S$. \end{lemma}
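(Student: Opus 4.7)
The plan is to argue by contradiction using the definition of convergence of closed sets (Chabauty/Fell) together with the continuity of the current-to-value pairing $\mu \mapsto \mu(C)$ for $C \in \mathcal{C}(\mathbf{Y})$, which is built into the subbasis defining the topology of $\rc$ in Section~\ref{subsec:RelativeCurrents}. So suppose, toward a contradiction, that there is a point $x \in \op{supp}(\eta)$ with $x \notin S$.

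Since the sets $\op{supp}(\eta_n)$ are closed in $\mathbf{Y}$ and converge to $S$, the limit set $S$ is itself closed in $\mathbf{Y}$. Because the cylinder sets $C(\gamma)$ corresponding to finite paths in $\free \setminus \ffa$ form a basis of compact open sets for the topology of $\mathbf{Y}$, we may choose such a $C \in \mathcal{C}(\mathbf{Y})$ with $x \in C$ and $C \cap S = \emptyset$. Now $C$ is compact and disjoint from $S$, so by the assumed convergence $\op{supp}(\eta_n) \to S$ there exists $N$ such that for all $n \geq N$ we have $C \cap \op{supp}(\eta_n) = \emptyset$. For such $n$ the definition of support forces $\eta_n(C) = 0$.

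The convergence $\eta_n \to \eta$ in $\rc$ means precisely that $\eta_n(C') \to \eta(C')$ for every $C' \in \mathcal{C}(\mathbf{Y})$; applying this to our set $C$ we conclude $\eta(C) = \lim_{n \to \infty} \eta_n(C) = 0$. On the other hand, $C$ is an open neighborhood of $x$ in $\mathbf{Y}$ and $x \in \op{supp}(\eta)$, which by definition of support means every open neighborhood of $x$ meets every $\free$-invariant measure-carrying set, and in particular $\eta(C) > 0$. This contradiction completes the proof.

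The argument is essentially formal once one keeps track of the right topologies, so there is no serious obstacle; the only mild subtlety is confirming that convergence of supports is to be interpreted in the Chabauty/Fell sense (so that compact sets disjoint from the limit are eventually disjoint from $\op{supp}(\eta_n)$), which is the only reading compatible with the later use of the lemma.
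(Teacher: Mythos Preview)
Your proof is correct and is essentially the contrapositive of the paper's argument: the paper takes a cylinder $C(w)$ with $\eta(w)>0$, uses $\eta_n\to\eta$ to get $\eta_n(w)>0$ eventually, hence $C(w)\cap\op{supp}(\eta_n)\neq\emptyset$ eventually, and then uses the same Chabauty criterion (compact sets disjoint from $S$ are eventually disjoint from $\op{supp}(\eta_n)$) to conclude $C(w)\cap S\neq\emptyset$. Both arguments use exactly the same ingredients in the same way; your only cosmetic blemish is the sentence ``every open neighborhood of $x$ meets every $\free$-invariant measure-carrying set,'' which should simply read that every open neighborhood of $x$ has positive $\eta$-measure.
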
 
\begin{proof}
Consider a word $w \in \free \setminus \ffa$ such that $\eta(w)>0$. Then given $\epsilon>0$ there exists an $N_0>0$ such that for every $n>N_0$, $\eta_n(w) > \epsilon$. Thus $C(w) \cap \op{supp}(\eta_n)$ is non-empty for every $n \geq N_0$ which implies that $C(w) \cap S$ is non-empty. Since this is true for any word $w \in \free \setminus \ffa$ with $\eta(w)>0$, we get that $\op{supp}(\eta) \subset S$. 
\end{proof} 
\subsection{Intersection form}\label{subsec:IntersectionForm}
\begin{definition}
Define a function $I: \relCVClo \times \prc \to \{0,1\}$ as follows: 
$$I(T,\eta) = 0 \text{ if } \overline{\op{supp}(\eta)} \subseteq L(T),$$
$$I(T,\eta) = 1 \text{ if } \overline{\op{supp}(\eta)} \nsubseteq L(T).$$ 
\end{definition}

\begin{lemma}\label{L:ContinuityAtZero}
The function $I$ satisfies the following properties:
\begin{enumerate}[(a)]
\item $I(T \Psi, \eta) = I(T, \Psi\eta)$ for $\Psi \in \op{Out}(\free, \ffa)$. 
\item Let $T_k \to T$ in $\relCVClo$ and $\eta_k \to \eta$ in $\prc$ such that $I(T_k, \eta_k) = 0$ for all $k$. If either $T$ has dense orbits or $\overline{\op{supp}(\eta)}$ is a birecurrent lamination then $I(T, \eta) = 0$. 
\end{enumerate}\end{lemma}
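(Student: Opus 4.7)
For part (a), the plan is to show that both the dual lamination and the support transform equivariantly, so the containment defining $I=0$ is preserved. Concretely, using $l_{T\Psi}(g) = l_T(\Psi(g))$, the sets $L_{\epsilon}(T\Psi)$ are exactly $\Psi^{-1}L_{\epsilon}(T)$, hence $L(T\Psi) = \Psi^{-1}L(T)$ as elements of $\Lambda^2(\free)$. On the current side, $\op{supp}(\Psi\eta) = \Psi\,\op{supp}(\eta)$ by definition of the $\op{Out}(\free,\ffa)$-action on $\prc$, and since $\Psi$ acts continuously on $\partial^2\free$ this equality extends to closures. Combining, $\overline{\op{supp}(\eta)} \subseteq L(T\Psi)$ if and only if $\overline{\op{supp}(\Psi\eta)} \subseteq L(T)$, which gives (a).

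For part (b), I would begin by extracting a subsequence: since $\Lambda^2(\free)$ is compact, after passing to a subsequence we may assume $L(T_k)$ converges to some $L_\infty \in \Lambda^2(\free)$. Similarly (using compactness of the space of closed subsets of $\mathbf{Y}$), we may assume $\op{supp}(\eta_k)$ converges to a set $S$. The first step is then to prove the abstract inclusion
\[
  \overline{\op{supp}(\eta)} \subseteq L_\infty.
\]
The idea is to pick any leaf $\ell = \{X,X'\} \in \op{supp}(\eta)$ and any finite subword $w$ of $\ell$. By definition of the support, $\eta(C(w))>0$, so for $k$ large $\eta_k(C(w))>0$, hence $w \in \mathcal{L}_{|w|}(\op{supp}(\eta_k)^s) \subseteq \mathcal{L}_{|w|}(L(T_k)^s)$ by the assumption $I(T_k,\eta_k)=0$. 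By the convergence criterion of Definition~\ref{D:ConvergenceLaminations}, for $k$ large $\mathcal{L}_{|w|}(L(T_k)^s) = \mathcal{L}_{|w|}(L_\infty^s)$, so $w$ appears in a leaf of $L_\infty$; letting $|w|\to\infty$ and using that $L_\infty$ is closed in $\partial^2\free$ gives $\ell \in L_\infty$, hence $\overline{\op{supp}(\eta)} \subseteq L_\infty$.

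The final step is to upgrade $L_\infty \subseteq \cdot$ to a statement about $L(T)$ in each of the two hypotheses. If $T$ has dense orbits, Lemma~\ref{L:LaminationLimit} gives $L_\infty \subseteq L(T)$ directly, so $\overline{\op{supp}(\eta)} \subseteq L(T)$, i.e.\ $I(T,\eta)=0$. If instead $\overline{\op{supp}(\eta)}$ is a birecurrent lamination, then it is a birecurrent sublamination of $L_\infty$, and Proposition~\ref{P:LaminationLimit} applies to give $\overline{\op{supp}(\eta)} \subseteq L(T)$, again yielding $I(T,\eta)=0$.

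The technical heart of the argument is the inclusion $\overline{\op{supp}(\eta)} \subseteq L_\infty$: this is where one must reconcile the subtly different topologies on $\prc$ (defined via values on cylinder sets in $\mathbf{Y}$) and on $\Lambda^2(\free)$ (defined via prefixes in a fixed basis), making careful use of the fact that leaves in $\op{supp}(\eta)$ lie in $\mathbf{Y}$ but their closure in $\partial^2\free$ can meet $\partial^2\ffa$ (cf.\ Example~\ref{E:CurrentSequence}). Once this inclusion is in place, Proposition~\ref{P:LaminationLimit} and Lemma~\ref{L:LaminationLimit} do the remaining work, with the hypothesis on $T$ or on $\overline{\op{supp}(\eta)}$ dictating which of the two is invoked.
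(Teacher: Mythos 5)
Your proposal is correct and takes essentially the same route as the paper: part (a) is the same equivariance observation, and part (b) reduces to the inclusion $\overline{\op{supp}(\eta)} \subseteq L_{\infty}$ followed by Lemma~\ref{L:LaminationLimit} in the dense-orbits case and Proposition~\ref{P:LaminationLimit} in the birecurrent case. The only cosmetic difference is that the paper routes your direct argument for that inclusion through the intermediate set $\mathcal{S} = \overline{\lim_{k}\op{supp}(\eta_k)}$ and Lemma~\ref{L:CurrentLimit}, whereas you establish it in one step via the convergence criterion for laminations.
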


It is not true in general that if $I(T_k, \eta_k) = 0$ for all $k$ then $I(T, \eta) = 0$. Consider the sequence of trees $T_k$ as in Example~\ref{E:Limit of laminations} and the sequence of currents $\eta_k$ as in Example~\ref{E:CurrentSequence}. Then $I(T_k, \eta_k)=0$ but $I(T, \eta) \neq 0$.  
\begin{proof}
\begin{enumerate}[(a)]
\item We have $\op{supp}(\Psi\eta) = \Psi \op{supp}(\eta)$ and $L(T\Psi) = \Psi^{-1}L(T)$ which gives the desired equality.
\item Let $\mathcal{S}$ be the closure of $\lim_{n \to \infty}\op{supp}(\eta_n)$ and let $L(T_n)$ converge to $L_{\infty}$. Then $\mathcal{S} \subseteq L_{\infty}$ and by Lemma~\ref{L:CurrentLimit}, $\overline{\op{supp}(\eta)} \subseteq \mathcal{S}$. If $T$ has dense orbits then by Lemma~\ref{L:LaminationLimit}, $L_{\infty} \subseteq L(T)$. Thus $\overline{\op{supp}(\eta)} \subseteq L(T)$. If $\overline{\op{supp}(\eta)}$ is a birecurrent lamination then by Proposition~\ref{P:LaminationLimit}, it is contained in $L(T)$.
\qedhere \end{enumerate}\end{proof}

\begin{lemma}[Uniqueness of dual]\label{L:Uniqueness of dual}
Let $\oo$ be a fully irreducible outer automorphism relative to $\ffa$. Let $T\in \relCVClo$ and $\eta \in \prc$. Then 
\begin{enumerate}[(a)]
\item $I(\PMTrees, \MPCurrents) = 0$
\item If $I(\PMTrees, \eta) = 0$ then $\eta = \MPCurrents$. 
\item If $I(T, \MPCurrents) = 0$ then $T= \PMTrees$.  
\end{enumerate} \end{lemma}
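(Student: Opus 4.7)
\emph{Plan for part (a).} This is a direct verification. By Lemma~\ref{L:Support}(b), $\overline{\op{supp}(\MPCurrents)} \subseteq \MPlaminations \cup \partial^2\ffa$, and Lemma~\ref{L:DualLamination} already gives $\MPlaminations \subseteq L(\PMTrees)$. What remains is $\partial^2 \ffa \subseteq L(\PMTrees)$. Every $[A_i] \in \ffa$ fixes a point in any tree in $\relCVClo$, so for each $g$ in a peripheral subgroup $l_{\PMTrees}(g) = 0$; hence $(g^{-\infty},g^{\infty}) \in L_\epsilon(\PMTrees)$ for every $\epsilon>0$, placing it in $L(\PMTrees)$. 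Taking closures yields $\partial^2\ffa \subseteq L(\PMTrees)$, and therefore $I(\PMTrees, \MPCurrents) = 0$.

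\emph{A useful non-vanishing.} Before tackling (b) and (c), I will establish the companion inequalities $I(\PMTrees, \PMCurrents) = 1$. By Lemma~\ref{L:Support}(a), a generic leaf $\gamma$ of $\PMlaminations$ lies in $\op{supp}(\PMCurrents) \subseteq \overline{\op{supp}(\PMCurrents)}$, and such a $\gamma$ is in $\mathbf{Y}$ since it is not carried by $\ffa$ (it crosses the top EG stratum). But Lemma~\ref{L:DualLamination} says $\PMlaminations \nsubseteq L(\PMTrees)$, so in particular $\overline{\op{supp}(\PMCurrents)} \nsubseteq L(\PMTrees)$, giving $I(\PMTrees, \PMCurrents) = 1$. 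These are exactly the contradictions I will drive at.

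\emph{Plan for part (b).} Argue by contradiction: suppose $I(\StableTree, \eta) = 0$ but $\eta \neq \UnstableCurrent$. Since $\StableTree$ is projectively $\oo$-invariant, Lemma~\ref{L:ContinuityAtZero}(a) yields $I(\StableTree, \oo^n \eta) = I(\StableTree \oo^{-n}, \eta) = I(\StableTree, \eta) = 0$ for every $n$. By Theorem B, $\oo^n \eta \to \StableCurrent$ in $\prc$. Because $\StableTree$ has dense orbits, the continuity statement of Lemma~\ref{L:ContinuityAtZero}(b) applies to the constant sequence of trees $\StableTree$ and the sequence of currents $\oo^n\eta$, forcing $I(\StableTree, \StableCurrent) = 0$; this contradicts the non-vanishing established above. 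Part (c) is symmetric: if $T \neq \StableTree$, then using projective $\oo$-invariance of $\UnstableCurrent$, $I(T\oo^{-n}, \UnstableCurrent) = I(T, \UnstableCurrent) = 0$; by Theorem C, $T\oo^{-n} \to \UnstableTree$, and now the second clause of Lemma~\ref{L:ContinuityAtZero}(b) applies since $\overline{\op{supp}(\UnstableCurrent)} \subseteq \Rlamination \cup \partial^2\ffa$ consists of birecurrent leaves (BFH generic leaves and periodic peripheral lines), yielding $I(\UnstableTree, \UnstableCurrent) = 0$, again contradicting the non-vanishing.

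\emph{Main obstacle.} The delicate point is part (b): Theorem B is stated for the subspace $\mrc \subseteq \prc$, so some care is required to ensure that either $\eta$ and its forward orbit stay in $\mrc$, or that the north-south conclusion extends to the relevant $\eta \in \prc$. One route is to observe that $I(\StableTree, \eta) = 0$ restricts $\overline{\op{supp}(\eta)}$ to sit inside $L(\StableTree)$, which by Lemma~\ref{L:DualLamination} and Lemma~\ref{L:Support} forces $\op{supp}(\eta)$ to be carried by $\Rlamination$ and peripheral data, and hence to lie in $\mrc$; the other ingredients above then apply verbatim. All other steps reduce to bookkeeping with the $\op{Out}(\free, \ffa)$-equivariance of $I$ and the two continuity clauses in Lemma~\ref{L:ContinuityAtZero}.
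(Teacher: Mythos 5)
Your proof is correct and follows essentially the same route as the paper: part (a) via Lemmas~\ref{L:DualLamination} and \ref{L:Support} together with ellipticity of $\ffa$, the non-vanishing $I(\PMTrees,\PMCurrents)=1$, and parts (b)--(c) by $\op{Out}(\free,\ffa)$-equivariance, Theorems B/C, and passage to the limit (the paper carries out that last step directly via Lemma~\ref{L:CurrentLimit} and closedness of $L(\StableTree)$ rather than citing the full Lemma~\ref{L:ContinuityAtZero}, but the effect is identical). The $\mrc$-versus-$\prc$ issue you flag in part (b) is genuine and is silently glossed over in the paper's own proof as well, so raising it is a point in your favor rather than a gap.
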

\begin{proof}\begin{enumerate}[(a)]
\item By Lemma~\ref{L:DualLamination}, $\MPlaminations \subset L(\PMTrees)$. Also $\partial^2 \ffa \subset L(\PMTrees)$ because $\ffa$ is elliptic in $\PMTrees$. Thus by Lemma~\ref{L:Support}, $\overline{\op{supp}(\MPCurrents)} \subseteq L(\PMTrees)$. 
\item By Lemma~\ref{L:DualLamination} and Lemma~\ref{L:Support}, $\op{supp}(\StableCurrent) \nsubseteq L(\StableTree)$, therefore $I(\StableTree, \StableCurrent) \neq 0$. Now suppose $I(\StableTree, \eta) = 0$ for some $\eta \neq \UnstableCurrent$. Then by definition $\overline{\op{supp}(\eta)} \subseteq L(\StableTree)$. By the $\op{Out}(\free, \ffa)$ action we also get that $\overline{\op{supp}(\oo^n(\eta))} \subseteq L(\StableTree)$. By Theorem B, $\oo^n(\eta)$ converges to $\StableCurrent$ therefore in the limit $\overline{\op{supp}(\StableCurrent)} \subseteq L(\StableTree)$ which is a contradiction.  
\item Similar argument as above using Theorem C. 
\qedhere \end{enumerate}\end{proof}
\section{Proof of main theorem}\label{sec:MainProof}
We will now present a proof of Theorem A. 

\begin{lemma}[UV-pair]\label{L:UV pair}
Let $\oo$ be fully irreducible relative to  $\ffa$. For every neighborhood $U$ of $\StableTree$ in $\relCVClo$, $\exists$ a neighborhood $V$ of $\UnstableCurrent$ in $\prc$ such that for every $T\in U^C$ and $\eta \in V$ we have $I(T, \eta) \neq 0$. \end{lemma}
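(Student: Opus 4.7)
The plan is to argue by contradiction, replicating in our setting the UV-pair construction sketched for the curve complex in the introduction, with the intersection function $I$ standing in for the intersection number. Suppose no such neighborhood $V$ exists. Choosing a shrinking neighborhood basis $\{V_k\}$ of $\UnstableCurrent$ in $\prc$, we can then extract sequences $T_k \in U^C$ and $\eta_k \in V_k$ with $I(T_k, \eta_k) = 0$; in particular $\eta_k \to \UnstableCurrent$ in $\prc$.

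Since $\relCVClo$ is compact (Horbez \cite{H:HorbezBoundary}), after passing to a subsequence $T_k$ converges to some $T \in \relCVClo$, and $T \in U^C$ because $U^C$ is closed; in particular $T \neq \StableTree$. The crux is now to promote the vanishings $I(T_k, \eta_k) = 0$ into $I(T, \UnstableCurrent) = 0$. This is exactly the content of Lemma~\ref{L:ContinuityAtZero}(b), provided one of its two hypotheses holds. Since we do not have control over whether $T$ has dense orbits, I will verify the other hypothesis: that $\overline{\op{supp}(\UnstableCurrent)}$ is a birecurrent lamination. By Lemma~\ref{L:Support}(a), $\op{supp}(\UnstableCurrent) = \Rlamination \cap \mathbf{Y}$, so in particular $\op{supp}(\UnstableCurrent) \subseteq \Rlamination$. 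The lamination $\Rlamination$ is closed in $\partial^2 \free$ (as the lift of a closed subset of $\mathcal{B}$), so $\overline{\op{supp}(\UnstableCurrent)} \subseteq \Rlamination$; every leaf of an attracting lamination is birecurrent, and this property passes to subsets, giving the needed birecurrence.

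Applying Lemma~\ref{L:ContinuityAtZero}(b) now yields $I(T, \UnstableCurrent) = 0$. By the uniqueness-of-dual statement Lemma~\ref{L:Uniqueness of dual}(c), this forces $T = \StableTree$, contradicting $T \in U^C$. The main obstacle — and the reason this lemma comes out cleanly — was isolated earlier: the heavy lifting on limits of dual laminations (Proposition~\ref{P:LaminationLimit}) and on limits of supports of relative currents (Lemma~\ref{L:CurrentLimit}) was already absorbed into Lemma~\ref{L:ContinuityAtZero}(b), so the only real verification here is the birecurrence of $\overline{\op{supp}(\UnstableCurrent)}$, which reduces to the structural fact that $\op{supp}(\UnstableCurrent)$ is contained in the closed birecurrent lamination $\Rlamination$.
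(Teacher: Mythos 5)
Your proof is correct and follows essentially the same route as the paper: extract sequences $T_k \in U^C$, $\eta_k \to \UnstableCurrent$ with $I(T_k,\eta_k)=0$, use compactness of $\relCVClo$, apply Lemma~\ref{L:ContinuityAtZero}(b) via birecurrence of $\overline{\op{supp}(\UnstableCurrent)}$, and conclude with Lemma~\ref{L:Uniqueness of dual}. The only difference is that you spell out the birecurrence verification (via Lemma~\ref{L:Support}) that the paper simply asserts.
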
 
\begin{proof}
Assume by contradiction that there exists a $U$ such that for every neighborhood $V$ of $\UnstableCurrent$ there exist $T \in U^C$ and $\eta \in V$ such that $I(T,\eta) =0$.

Let $V_i$ be an infinite sequence of nested neighborhoods of $\UnstableCurrent$ such that $V_i \supset V_{i+1}$ and $\cap V_i = \UnstableCurrent$. Then by assumption there exist $T_i \in U^C$ and $\eta_i \in V_i$ such that $I(T_i, \eta_i) = 0$. Since $\relCVClo$ is compact, after passing to a subsequence we have $T_i \to T$, for $T\neq \StableTree$. Also $\eta_i \to \UnstableCurrent$. Since the support of $\UnstableCurrent$ gives a birecurrent lamination, by Lemma~\ref{L:ContinuityAtZero} we get $I(T, \UnstableCurrent)= 0$, which contradicts Lemma~\ref{L:Uniqueness of dual}. \end{proof} 

\begin{lemma}[VU-pair]\label{L:VU pair}
For every neighborhood $V$ of $\UnstableCurrent$ in $\prc$, $\exists$ a neighborhood $U$ of $\StableTree$ in $\relCVClo$ such that for every $\eta \in V^C$ and $T\in U$ we have $I(T, \eta) \neq 0$.  \end{lemma}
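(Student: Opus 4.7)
The plan is to prove Lemma~\ref{L:VU pair} by a compactness-and-contradiction argument that mirrors the proof of Lemma~\ref{L:UV pair}, but with the roles of trees and currents swapped. This time the continuity statement we shall need concerns a limit of trees converging to $\StableTree$ rather than a limit of currents converging to $\UnstableCurrent$, so the invocation of Lemma~\ref{L:ContinuityAtZero} will be through its dense-orbits clause rather than through its birecurrence clause.

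First, I would assume for contradiction that there exists a neighborhood $V$ of $\UnstableCurrent$ in $\prc$ such that for every neighborhood $U$ of $\StableTree$ in $\relCVClo$ one can find $T \in U$ and $\eta \in V^C$ with $I(T,\eta)=0$. Choose a nested sequence of open neighborhoods $U_i$ of $\StableTree$ with $\bigcap_i U_i = \{\StableTree\}$, and pick corresponding $T_i \in U_i$ and $\eta_i \in V^C$ with $I(T_i,\eta_i)=0$. Then $T_i \to \StableTree$ by construction, and since $\prc$ is compact and $V^C$ is closed, after passing to a subsequence we obtain $\eta_i \to \eta$ for some $\eta \in V^C$; in particular $\eta \neq \UnstableCurrent$.

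Next I would apply Lemma~\ref{L:ContinuityAtZero}(b). The relevant hypothesis is satisfied because $\StableTree$ has dense orbits (this was established in the paragraph following the proof of Lemma~\ref{L:DualLamination}). Consequently $I(\StableTree,\eta)=0$. But then Lemma~\ref{L:Uniqueness of dual}(b) forces $\eta = \UnstableCurrent$, which contradicts $\eta \in V^C$. This contradiction completes the argument.

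I do not expect any serious obstacle: the entire proof is a direct transposition of the UV-pair argument, and the two technical ingredients (compactness of $\prc$, and the fact that $\StableTree$ has dense orbits, which delivers continuity of the zero-pairing in the tree variable) are both already in hand. The only point worth checking carefully is that one really is in the dense-orbits case of Lemma~\ref{L:ContinuityAtZero}, since here the limit current $\eta$ need not have birecurrent support; this is why it was essential that the stable tree have dense orbits, and not merely that its dual lamination be well-behaved.
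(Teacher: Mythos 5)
Your proof is correct and is exactly the argument the paper intends when it says the proof is ``Same as for Lemma~\ref{L:UV pair}'': a compactness-and-contradiction argument with the roles of trees and currents swapped, concluding via Lemma~\ref{L:ContinuityAtZero}(b) and Lemma~\ref{L:Uniqueness of dual}(b). You have also correctly identified the one genuine asymmetry -- that here one must invoke the dense-orbits clause of Lemma~\ref{L:ContinuityAtZero} (since $\StableTree$ has dense orbits) rather than the birecurrence clause -- which is precisely the point worth making explicit.
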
 
\begin{proof} Same as for Lemma~\ref{L:UV pair}. \end{proof}

\begin{lemma}\label{L:UVSequence}
There exist nested sequences 
$U_0 \supset U_1 \supset U_2 \supset U_3 \ldots \supset U_{2N}$ and $V_1 \supset V_2 \supset V_3 \ldots \supset V_{2N}$ of neighborhoods of $\StableTree$ and $\UnstableCurrent$ respectively such that the following are true: 
\begin{itemize}
\item $\exists \,k>0$ such that for every $i$, $\oo^k(U_{i}) \subset U_{i+1}$ and $\oo^{-k}(V_{i})\subset V_{i+1}$. 
\item $(U_i, V_{i+1})$ form a UV-pair for all $i \geq 0$. 
\item $(V_i, U_{i})$ form a VU-pair for all $i \geq 1$. 
\end{itemize} \end{lemma}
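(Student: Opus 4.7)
The plan is to build the sequences by a finite induction that alternates applications of the UV- and VU-pair lemmas, arranging at each step that the closures of the chosen neighborhoods avoid the repelling fixed points; the single power $k$ then comes from applying the uniform north-south dynamics of Theorems B and C to these (finitely many) compact closures and taking a maximum.

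First I would pick an open neighborhood $U_0$ of $\StableTree$ in $\relCVClo$ with $\UnstableTree \notin \overline{U_0}$, which is possible because $\relCVClo$ is compact Hausdorff and $\StableTree \neq \UnstableTree$. Having constructed $U_i$, I would apply Lemma~\ref{L:UV pair} to obtain a neighborhood $\widetilde V_{i+1}$ of $\UnstableCurrent$ in $\prc$ such that $(U_i, \widetilde V_{i+1})$ is a UV-pair, and then set
\[
V_{i+1} \;=\; \widetilde V_{i+1} \,\cap\, V_i \,\cap\, W,
\]
where $W$ is a neighborhood of $\UnstableCurrent$ in $\prc$ whose closure in $\mrc$ avoids $\StableCurrent$ (and where $V_i$ is omitted when $i=0$). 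Since the UV-pair condition is universally quantified over $\eta \in V$, shrinking $V$ preserves it, so $(U_i, V_{i+1})$ is still a UV-pair. Symmetrically, given $V_{i+1}$, I would apply Lemma~\ref{L:VU pair} to get $\widetilde U_{i+1}$ with $(V_{i+1}, \widetilde U_{i+1})$ a VU-pair, and set $U_{i+1} = \widetilde U_{i+1} \cap U_i \cap W'$, where $W'$ is a neighborhood of $\StableTree$ whose closure avoids $\UnstableTree$; shrinking $U$ similarly preserves the VU-pair property. Iterating this until $i = 2N$ yields nested sequences
$U_0 \supset U_1 \supset \dots \supset U_{2N}$ and $V_1 \supset V_2 \supset \dots \supset V_{2N}$ satisfying all the required UV/VU-pair relations, with $\overline{U_i}\not\ni \UnstableTree$ and $\overline{V_i}\not\ni \StableCurrent$.

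For the dynamic condition, each $\overline{U_i}$ is a compact subset of $\relCVClo$ that does not contain $\UnstableTree$, so Theorem~C provides an integer $k_i^T \geq 1$ with $\overline{U_i}\,\oo^n \subseteq U_{i+1}$ for all $n \geq k_i^T$. Likewise each $\overline{V_i}$ (taken inside $\mrc$) is compact and does not contain $\StableCurrent$, so Theorem~B provides $k_i^C \geq 1$ with $\oo^{-n}(\overline{V_i}) \subseteq V_{i+1}$ for all $n \geq k_i^C$. As the index $i$ ranges over a finite set, setting
\[
k \;=\; \max_{0 \leq i < 2N}\, \max\bigl(k_i^T,\, k_i^C\bigr)
\]
gives a single positive integer such that $\oo^k(U_i) \subseteq U_{i+1}$ and $\oo^{-k}(V_i) \subseteq V_{i+1}$ for every $i$.

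The main obstacle, such as it is, is a matter of bookkeeping: one must ensure at each inductive step that the closures of $U_i$ and $V_i$ avoid the fixed points $\UnstableTree$ and $\StableCurrent$ respectively, since this is precisely what Theorems~B and~C require in order to deliver a uniform threshold. The key observation that makes the whole construction go through is that both the UV-pair and VU-pair conditions are preserved under shrinking the second entry, so the nesting and the closure-avoidance can be built in without destroying the pairing relations obtained from Lemmas~\ref{L:UV pair} and~\ref{L:VU pair}.
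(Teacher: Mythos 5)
Your proof is correct and follows essentially the same route as the paper: alternately invoke the UV- and VU-pair lemmas to build the nested sequences, then use uniform north--south dynamics (Theorems B and C) on the finitely many compact sets involved and take the maximum of the resulting thresholds to get a single $k$. Your extra bookkeeping --- arranging that $\overline{U_i}$ avoids $\UnstableTree$ and $\overline{V_i}$ avoids $\StableCurrent$, and noting that the pair conditions survive shrinking the second entry --- is a point the paper leaves implicit, and it is exactly what justifies the existence of the $r_i$ and $s_i$ there.
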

\begin{proof}
To build such sequences start with $U_0$. Then there exists $V_1$ such that $(U_0, V_1)$ form a UV-pair. Next there exists a $U_1$ such that $(V_1, U_1)$ form a VU-pair. If $U_1 \nsubseteq U_0$ then we can replace $U_1$ by a smaller open set in $U_1 \cap U_0$.

Let $r_i = \text{min}\{\text{p} \,|\, \oo^p(U_i) \subset U_{i+1}\}$ for $0 \leq i \leq 2N$ and let $s_i = \text{min}\{\text{p}\, | \, \oo^{-p}(V_i) \subset V_{i+1}\}$ for $0<i < 2N$. The numbers $r_i$ and $s_i$ exist because we have uniform north-south dynamics. Now define $k$ to be the maximum of the numbers $\{r_i\}_{i=0}^{2N}, \{s_i\}_{i=1}^{2N}$. 
\end{proof}

\begin{thmA}
Let $\ffa$ be a non-exceptional free factor system in a finite rank free group $\free$ of rank at least 3 and let $\oo \in \op{Out}(\free, \ffa)$. Then $\oo$ acts loxodromically on $\rfa$ if and only if $\oo$ is fully irreducible relative to $\ffa$. \end{thmA}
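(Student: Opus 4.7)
The plan is to prove the two directions of Theorem~A separately. For the easy direction, suppose $\oo$ is not fully irreducible relative to $\ffa$: then some power $\oo^n$ preserves a non-trivial free factor system $\dd$ with $\ffa \sqsubsetneq \dd$, which is a vertex of $\rfa$ fixed by $\oo^n$; the $\oo$-orbit of this vertex is bounded, so $\oo$ is not loxodromic.

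For the converse, I will implement the Bestvina--Fujiwara strategy outlined in the introduction using the UV/VU-pair apparatus assembled in Lemmas~\ref{L:UV pair}, \ref{L:VU pair}, and~\ref{L:UVSequence}. The critical bridge is that each vertex $\dd$ of $\rfa$ gives rise both to a tree $T_\dd \in \relCVClo$ in which all free factors of $\dd$ are elliptic (for instance, any very small simplicial $(\free,\ffa)$-tree whose non-trivial point stabilizer system equals $\dd$) and, for each edge $\dd \sqsubsetneq \dd'$, to a relative current $\eta = \eta_g \in \prc$ obtained from a primitive conjugacy class $g$ inside some free factor $A$ of $\dd$ strictly containing a factor of $\ffa$. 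Since $\overline{\op{supp}(\eta_g)} \subseteq \partial^2 A$ and $A$ is elliptic in both $T_\dd$ and $T_{\dd'}$, we get $\partial^2 A \subseteq L(T_\dd) \cap L(T_{\dd'})$, and hence $I(T_\dd, \eta) = I(T_{\dd'}, \eta) = 0$.

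Fix $a = \dd_0 \in \rfa$; since $\StableTree$ and $\UnstableTree$ have dense orbits (established earlier in the paper) while $T_{\dd_0}$ is simplicial, we have $T_{\dd_0} \neq \StableTree, \UnstableTree$. Given $N \geq 1$, apply Lemma~\ref{L:UVSequence}, choosing $U_0$ to contain both $\StableTree$ and $T_{\dd_0}$ but not $\UnstableTree$, and shrinking $U_1$ to exclude $T_{\dd_0}$, to obtain nested sequences $U_0 \supset \cdots \supset U_{2N}$ in $\relCVClo$ and $V_1 \supset \cdots \supset V_{2N}$ in $\prc$ with shift constant $k$ such that $T_{\dd_0} \in U_0 \setminus U_1$. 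For any path $\dd_0, \dd_1, \ldots, \dd_m$ in $\rfa$, an alternating induction then yields $T_{\dd_i} \in U_{i+1}^C$ for every $i$: from $T_{\dd_{i-1}} \in U_i^C$ combined with $I(T_{\dd_{i-1}}, \eta) = 0$, the UV-pair $(U_i, V_{i+1})$ gives $\eta \in V_{i+1}^C$, and then the VU-pair $(V_{i+1}, U_{i+1})$ together with $I(T_{\dd_i}, \eta) = 0$ gives $T_{\dd_i} \in U_{i+1}^C$. Meanwhile the shift property gives $\oo^{2Nk}(T_{\dd_0}) \in U_{2N}$, which combined with $U_{2N} \subseteq U_{m+1}$ (when $m < 2N$) forces $m \geq 2N$; thus $d_\rfa(a, \oo^{2Nk}(a)) \geq 2N$.

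The main obstacle I anticipate is upgrading these inequalities into genuine positive stable translation length, since the shift constant $k = k(N)$ from Lemma~\ref{L:UVSequence} might a priori depend on $N$ while only the ratio $d_\rfa(a, \oo^{2Nk}(a))/(2Nk) \geq 1/k_N$ is directly exhibited. This can be handled either by extending a single nested sequence inductively with one uniform shift, exploiting the uniform north-south dynamics on $\relCVClo$ from Theorem~C, or by invoking the classification of isometries of the hyperbolic graph $\rfa$: the unbounded orbit combined with the attracting and repelling fixed-point structure from Theorems~B and~C rules out elliptic and parabolic behavior, leaving only loxodromic. A secondary technical point is confirming that the edge-associated current genuinely lies in $\prc$ (supported in $\mathbf{Y} = \partial^2 \free \setminus \partial^2 \ffa$), which follows from the chosen factor $A$ properly containing a factor of $\ffa$ and therefore carrying non-peripheral conjugacy classes.
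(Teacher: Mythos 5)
Your proposal follows essentially the same route as the paper: associate to each vertex $\dd$ of $\rfa$ a simplicial tree $T_{\dd}\in\relCVClo$ and to each edge a relative current that is dual (in the sense of the pairing $I$) to the trees at both endpoints, then push successive trees out of the nested neighborhoods $U_i$ via the UV-- and VU--pairs of Lemma~\ref{L:UVSequence}. The only structural difference is cosmetic: you attach a current to every edge (supported in the smaller endpoint), consuming one index $U_i$ per edge, whereas the paper alternates between ``valley'' and ``peak'' vertices of a geodesic and consumes one index per two edges; both inductions are valid. The obstacle you flag is genuine --- in Lemma~\ref{L:UVSequence} the constant $k$ is a maximum over $i\le 2N$ and so a priori depends on $N$ --- and of your two remedies only the first is sound: by the equivariance $I(T\Psi,\eta)=I(T,\Psi\eta)$ the image of a UV-pair under a power of $\oo$ is again a UV-pair, so a single fixed pair generates the whole nested sequence with one uniform $k$; the appeal to the classification of isometries does not work as stated, since Theorems~B and~C concern $\mrc$ and $\relCVClo$ rather than the Gromov boundary of $\rfa$, and an unbounded orbit alone does not rule out parabolic behavior.
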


\begin{proof}
Let $\dd \in \rfa$ be a free factor system. Let $T_{\dd} \in \relCV$ be a simplicial tree such that its set of vertex stabilizers is equal to $\dd$. Let $\eta_{\dd}$ be a relative current with support contained in $\partial^2\dd$. Consider nested neighborhoods $U_0 \supset U_1 \supset \ldots \supset U_{2N}$ of $\StableTree$ and $V_1 \supset V_2 \supset \ldots \supset V_{2N}$ of $\UnstableCurrent$ and constant $k$ as in Lemma~\ref{L:UVSequence} such that $T_{\dd} \in U_0 \cap U_1^C$ and $\eta_{\dd} \in V_1^C$. See Figure~\ref{F:MainProof}. By Lemma~\ref{L:UV pair} and \ref{L:VU pair} we have the following: 
\begin{itemize}
\item If $T \in U_{i}^C$ and $I(T, \eta) = 0$ then $\eta \in V_{i+1}^C$. 
\item If $\eta \in V_{i}^C$ and $I(T, \eta) = 0$ then $T \in U_{i}^C$.
\end{itemize}
We have $T_{\dd}\oo^{ik} \in U_{i}$ and $\oo^{-ik}\eta_{\dd} \in V_i$. If $\dd$ is the set of vertex stabilizers of $T_{\dd}$ then $\oo^{-2ik}(\dd)$ is the set of vertex stabilizers of $T_{\dd}\oo^{2ik}$. 

\begin{figure}[h]
\centering
\includegraphics[scale=.5]{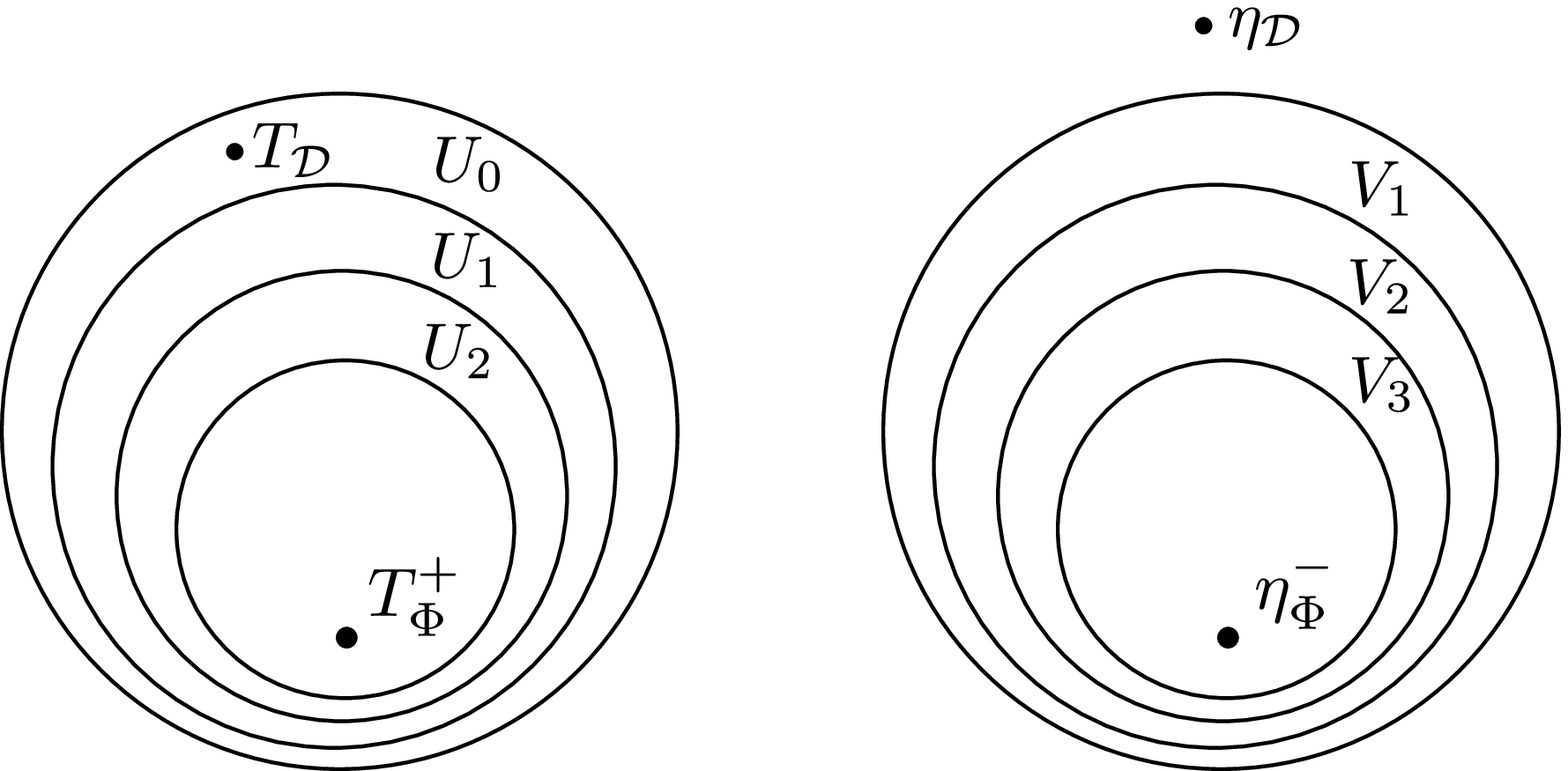}
\caption{}
\label{F:MainProof}
\end{figure}

We claim that $d_{\rfa}(\dd, \oo^{-2Nk}\dd) > 2N$ and $d_{\rfa}(\dd, \oo^{2Nk}\dd) > 2N$. For simplicity let's first consider the case when $N=1$ and for contradiction assume that $d_{\rfa}(\dd, \oo^{-2k}\dd) = 2$. 
Let $\ee$ be a free factor system distance one from both $\dd$ and $\oo^{-2k}\dd$. There are two cases to consider: 
\begin{enumerate}[(a)]
\item $\ee \sqsubset \dd$ and $\ee \sqsubset \oo^{-2k}\dd$: let $T_{\ee}$ be a simplicial tree whose set of vertex stabilizers is given by $\ee$. Choose $\eta$ such that $I(T_{\ee}, \eta) = 0$. Then $I(T_{\dd}, \eta) = 0$. Since $T_{\dd} \in U_1^C$ we get that $\eta \in V_2^C$. Also $I(T_{\dd}\oo^{2k}, \eta) = 0$ and since $\eta \in V_2^C$ we get $T_{\dd}\oo^{2k} \in U_2^{C}$. But that is a contradiction since $T_{\dd}\oo^{2k} \in U_2$. 
\item $\ee \sqsupset \dd$ and $\ee \sqsupset \oo^{-2k}\dd$:  we have that $I(T_{\ee}, \eta_{\dd}) = 0$. Since $\eta_{\dd} \in V_1^C$ we get $T_{\ee} \in U_1^{C}$. We also have that $I(T_{\ee}, \oo^{2k}\eta_{\dd}) = 0$. Since $T_{\ee} \in U_1^C$ we get $\oo^{-2k}\eta_{\dd} \in V_2^C$, which is a contradiction.  
\end{enumerate}

The above proof in particular also shows that $d_{\rfa}(\dd, \oo^{-2Nk}(\dd))>2$. Now for  any $N>0$, assume by contradiction that $d_{\rfa}(\dd, \oo^{-2Nk}\dd) \leq 2N$. Consider a geodesic $\dd = \ee_0, \ee_1, \ee_2 \ldots, \ee_{l}, \ee_{l+1} = \oo^{-2Nk}\dd$, $l<2N$, in $\rfa$. Without loss of generality assume that $\ee_1 \sqsubset \dd$. Then starting with applying the same argument as in $(a)$ for the triple $\dd, \ee_1, \ee_2$ we alternatively apply $(a)$ and $(b)$ to reach a contradiction. \end{proof}

As an example to exhibit the above proof for $N>1$, consider a geodesic $\dd = \ee_0, \ee_1, \ee_2 \ldots, \ee_{5}, \ee_{6} = \oo^{-6k}\dd$ in $\rfa$ connecting $\dd$ and $\oo^{-6k}\dd$. Without loss of generality assume that $\ee_1 \sqsubset \dd$. Let $T_i$ be a tree in $\relCVClo$ whose set of vertex stabilizers is given by $\ee_i$. We have $T_0 \in U_0 \cap U_1^C$ and thus $T_6$ is contained in $U_6$.  

\begin{itemize}
\item Given $T_1$, choose $\eta_1$ such that $I(T_1, \eta_1) = 0$ which implies that $I(T_{\dd}, \eta_1) = 0$ ($\op{supp}(\eta_1) \subset \partial^2\ee_1 \subset \partial^2\dd$). Also $I(T_2, \eta_1) =0$ because $\op{supp}(\eta_1) \subset \partial^2\ee_1 \subset \partial^2\ee_2$. 
\item Given $T_3$, choose $\eta_2$ such that $I(T_3, \eta_2) = 0$ which implies that $I(T_2, \eta_2) = 0$ ($\op{supp}(\eta_2) \subset \partial^2\ee_3 \subset \partial^2\ee_2$). Also $I(T_4, \eta_2) = 0$ because $\op{supp}(\eta_2) \subset \partial^2\ee_3 \subset \partial^2\ee_4$. 
\item Given $T_5$, choose $\eta_3$ such that $I(T_5, \eta_3) = 0$ which implies that $I(T_4, \eta_3) = 0$ ($\op{supp}(\eta_3) \subset \partial^2\ee_5 \subset \partial^2\ee_4$). Also $I(T_6, \eta_3) = 0$ because $\op{supp}(\eta_3) \subset \partial^2\ee_5 \subset \partial^2\ee_6$. 
\end{itemize}
Now using all of the above information we get $$T_{\dd} \in U_1^C \implies \eta_1 \in V_2^C \implies T_2 \in U_2^C \implies \eta_2 \in V_3^C \implies T_4 \in U_3^C \implies \eta_3 \in V_4^C \implies T_6 \in U_4^C,$$ which is a contradiction. See Figure~\ref{F:MainProofEx}.

\begin{figure}[h]
\centering
\includegraphics[scale=.5]{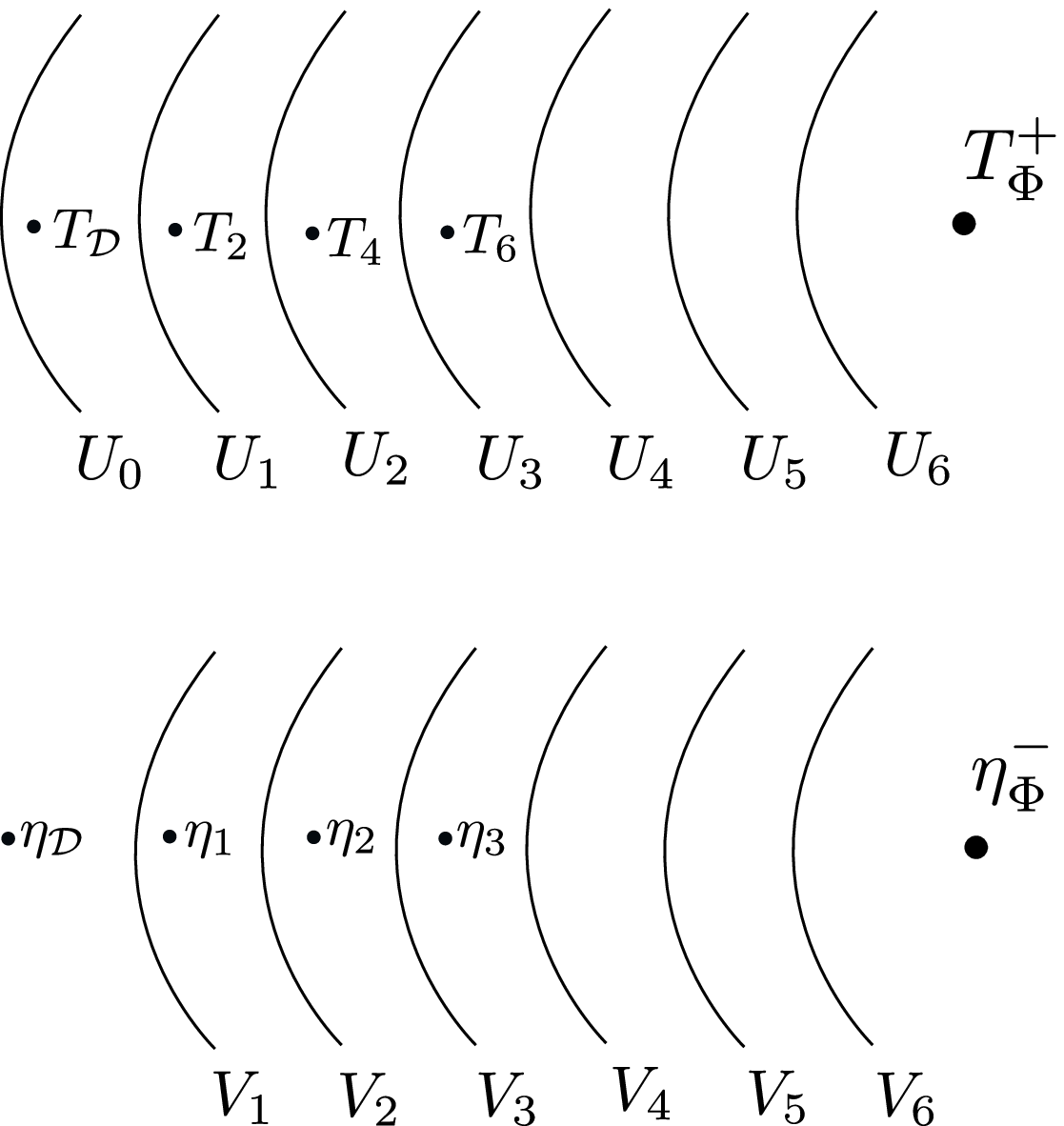}
\caption{}
\label{F:MainProofEx}
\end{figure}

\newpage
\bibliographystyle{alpha}
\bibliography{bib2}
\end{document}